\newtheorem{assumption}{{ Assumption}}
\def\tran{^{\mathsf{T}}}
\newcommand{\bp}{ \begin{proof}}
	\newcommand{\ep}{\end{proof} }
\newcommand{\Ex}{\mathbb{E}\hspace{0.05cm}}
\newcommand{\bm}[1]{\mbox{\boldmath $#1$}}
\newcommand{\be}{\begin{equation}}
	\newcommand{\ee}{\end{equation}}
\newcommand{\bqq}{\begin{eqnarray}}
	\newcommand{\eqq}{\end{eqnarray}}
\newcommand{\bal}{\begin{align}}
	\newcommand{\eal}{\end{align}}
\newcommand{\bqn}{\begin{eqnarray*}}
	\newcommand{\eqn}{\end{eqnarray*}}
\newcommand{\ba}{\left[ \begin{array}}
	\newcommand{\ea}{\\ \end{array} \right]}
\newcommand{\qd}{\hfill{$\blacksquare$}}
\newcommand{\define}{\;\stackrel{\Delta}{=}\;}
\def\btheta  {{\boldsymbol \theta}}
\def\bpsi       {{\boldsymbol \psi}}
\def\B{{\boldsymbol{B}}}
\def\H{{\boldsymbol{H}}}
\def\I{{\boldsymbol{I}}}
\def\J{{\boldsymbol{J}}}
\def\K{{\boldsymbol{K}}}
\def\M{{\boldsymbol{M}}}
\def\R{{\boldsymbol{R}}}
\def\d{{\boldsymbol{d}}}
\def\h{{\boldsymbol{h}}}
\def\s{{\boldsymbol{s}}}
\def\u{{\boldsymbol{u}}}
\def\v{{\boldsymbol{v}}}
\def\w{{\boldsymbol{w}}}
\def\x{{\boldsymbol{x}}}
\def\y{{\boldsymbol{y}}}
\newcommand{\rev}[1]{{\color{black}#1}}
\newcommand{\tw}{\widetilde{\boldsymbol{w}}}
\newcommand{\hw}{\widehat{\boldsymbol{w}}}
\newcommand{\cw}{\check{\boldsymbol{w}}}
\newcommand{\tx}{\widetilde{\boldsymbol{x}}}
\newcommand{\tpsi}{\widetilde{\boldsymbol{\psi}}}
\newcommand{\bgm}{{\boldsymbol{\gamma}}}
\newcommand{\grad}{{\nabla}} 
\def\bE{\mathbb{E}}
\def\filt{\boldsymbol{\mathcal{F}}}
\newcommand{\eq}[1]{\begin{align}#1\end{align}}
\newcommand{\beqn}{\begin{eqnarray}}
	\newcommand{\eeqn}{\end{eqnarray}}
\newcommand{\defeq}{{\stackrel{\triangle}{=}}}
\newcommand{\nnb}{\nonumber \\}
\newcommand{\dom}{{\mathrm{dom}}} 
\newcommand{\mua}{{\mu_{\max}}}
\newcommand{\musa}{{\mu^2_{\max}}}
\def\real{{\mathbb{R}}}
\def\hlinewd#1{%
	\noalign{\ifnum0=`}\fi\hrule \@height #1 \futurelet
	\reserved@a\@xhline}
\begin{document}
	
	\title{On the Influence of Momentum Acceleration on Online Learning}
	
	\author{\name Kun Yuan \email kunyuan@ucla.edu\\
		\name Bicheng Ying \email ybc@ucla.edu \\
		\name Ali H.\ Sayed \email sayed@ucla.edu \\
		\addr Department of Electrical Engineering\\
		University of California\\
		Los Angeles, CA 90095, USA\\
		}
	
	\maketitle

	\begin{abstract}
		The article examines in some detail the convergence rate and mean-square-error performance of momentum stochastic gradient methods {\color{black}in the constant step-size and slow adaptation regime}. The results establish that momentum methods are equivalent to
		the standard stochastic gradient method with a re-scaled (larger) step-size value.
		The size of the re-scaling is determined by the value of the momentum parameter. The equivalence result is established for all time instants and not only in steady-state. The analysis is carried out for general {\color{black}strongly convex and smooth} risk functions, and is not limited to quadratic risks. One notable conclusion is that the well-known benefits of momentum constructions for deterministic optimization problems do not necessarily carry over to {\color{black}the adaptive online setting when small constant step-sizes are used to enable continuous adaptation and learning in the presence of persistent gradient noise.} {\color{black}From simulations, the equivalence between momentum and standard stochastic gradient methods is also observed for non-differentiable and non-convex problems.}
	\end{abstract}
	
	
	\begin{keywords}
		Online Learning, Stochastic Gradient, Momentum Acceleration, Heavy-ball Method, Nesterov's Method, Mean-Square-Error Analysis, Convergence Rate
	\end{keywords}
	
	\section{Introduction}
	Stochastic optimization focuses on the problem of optimizing the expectation of a loss function, written as
	\eq{
		\label{general prob}
		\min_{w\in \mathbb{R}^M} \; J(w)\define\bE_{\btheta}[ Q(w;\btheta)],
	}
	where $\btheta$ is a random variable whose distribution is generally unknown and $J(w)$ is a convex function (usually strongly-convex due to regularization). 
	If the probability distribution of the data, $\btheta$, is known beforehand, then one can evaluate $J(w)$ and seek its minimizer by means of a variety of gradient-descent or Newton-type methods \citep{polyak1987intro,bertsekas1999nonlinear,Nesterov2003}. We refer to these types of problems, where $J(w)$ is known, as {\em deterministic} optimization problems. On the other hand, when the probability distribution of the data is unknown, then the risk function $J(w)$ is unknown as well; only instances of the loss function, $Q(w;\btheta)$, may be available at various observations $\btheta_i$, where $i$ refers to the sample index. We refer to these types of problems, where $J(w)$ is unknown but defined implicity as the expectation of some known loss form, as {\em stochastic} optimization problems. This article deals with this second type of problems, which are prevalent in online adaptation and learning contexts  \citep{widrow1985adaptive, haykin2008adaptive, Sayed2011adaptive, theodoridis2015machine}. 
	
	When $J(w)$ is differentiable, one of the most popular techniques to seek minimizers for \eqref{general prob} is to employ the {\em stochastic} gradient method. This algorithm is based on employing instantaneous approximations for the true (unavailable) gradient vectors, $\grad_w J(w)$, by using the gradients of the loss function, $\grad_w Q(w;\btheta_i)$, evaluated at successive samples of the streaming data $\btheta_i$ over the iteration index $i$, say, as:
	\be\label{stochastic gradient method-2}
	\w_i\;=\;\w_{i-1}\;-\;\mu \grad_w\, Q(\w_{i-1};\btheta_i),\;\;\;i\geq 0.
	\ee
	where $\mu>0$ is a step-size parameter. Note that we are denoting the successive iterates by $\w_i$ and using the boldface notation to refer to the fact that they are random quantities in view of the randomness in the measurements $\{\btheta_i\}$.    Due to their simplicity, robustness to noise and uncertainty, and scalability to big data, such stochastic gradient methods have become popular in large-scale optimization, machine learning, and data mining applications \citep{zhang2004solving, bottou2010large,gemulla2011large, Sutskever2013, kahou2013combining, cevher2014convex, szegedy2014going, zarkeba2015accelerated}.
	
	\subsection{Convergence Rate}
	\label{sec:conv-rate}
	Stochastic-gradient algorithms can be implemented with decaying step-sizes, such as $\mu(i)=\tau/i$ for some constant $\tau$, or with constant step-sizes, $\mu>0$. The former generally ensure asymptotic convergence to the true minimizer of \eqref{general prob}, denoted by $w^o$,  at a convergence rate that is on the order of $O(1/i)$ {\color{black} for strongly-convex risk functions.} This guarantee, however, comes at the expense of turning off adaptation and learning as time progresses since the step-size value approaches zero in the limit, as $i\rightarrow\infty$. As a result, the algorithm loses the ability to track concept drifts. In comparison, constant step-sizes keep adaptation and learning alive and infuse a desirable tracking mechanism into the operation of the
	algorithm: even if the minimizers drift with time, the algorithm will generally be able to adjust and track their locations. Moreover, convergence can now occur at the considerably faster exponential rate, $O(\alpha^i$), for some $\alpha\in (0,1)$. These favorable properties come at the expense of a small deterioration in the limiting accuracy of the iterates since almost-sure convergence is not guaranteed any longer. Instead, the algorithm converges in the mean-square-error sense towards a small neighborhood around the true minimizer, $w^o$, whose radius is on the order of $O(\mu)$. This is still a desirable conclusion because the value of $\mu$ is controlled by the designer and can be chosen sufficiently small. 
		
	
	A well-known tradeoff therefore develops between convergence rate and mean-square-error (MSE) performance. The asymptotic MSE performance level approaches $O(\mu)$ while the convergence rate is given by $\alpha=1-O(\mu)$ \citep{polyak1987intro, sayed2014adaptation}. 
	It is nowadays well-recognized that the small $O(\mu)$ degradation in performance is acceptable in most large-scale learning and adaptation problems \citep{bousquet2008tradeoffs, bottou2010large, sayed2014adaptive}. This is because, in general, there are always modeling errors in formulating optimization problems of the form (\ref{general prob}); the cost function may not reflect perfectly the scenario and data under study. As such, insisting on attaining asymptotic convergence to the true minimizer may not be necessarily the best course of action or may not be worth the effort. It is often more advantageous to tolerate a small steady-state error that is negligible in most cases, but is nevertheless attained at a faster exponential rate of convergence than the slower rate of $O(1/i)$. Furthermore, the data models in many applications are more complex than assumed, with possibly local minima. In these cases, constant step-size implementations can help reduce the risk of being trapped at local solutions. 
	
	For these various reasons, and since our emphasis is on algorithms that are able to learn continuously, we shall focus on \rev{small {\em constant} step-size} implementations. \rev{In these cases, gradient noise is always present, as opposed to decaying step-size implementations where the
gradient noise terms get annihilated with time. The analysis in the paper will establish analytically, and illustrate by simulations, 
 that, for sufficiently small step-sizes, any benefit from a momentum stochastic-construction can be attained by adjusting the 
step-size parameter for the original stochastic-gradient implementation. We emphasize here the qualification ``small'' for the step-size. The
reason we focus on small step-sizes (which correspond to the slow adaptation regime) is because, in the stochastic context, mean-square-error stability and convergence require small step-sizes.}
	
	
%
	
	\subsection{Acceleration Methods}
	\label{sec:intro-acc-method}
	In the {\em deterministic} optimization case, when the true gradient vectors of the {\color{black}smooth} risk function $J(w)$ are available, the iterative algorithm for seeking the minimizer of $J(w)$ becomes the following gradient-descent recursion
	\be\label{gradient descent}
	w_i=w_{i-1}-\mu \nabla_w\,J(w_{i-1}),\;\;i\geq 0,
	\ee
	There have been many ingenious methods proposed in the  literature to enhance the convergence of these methods for both cases of convex and strongly-convex risks, $J(w)$. Two of the most notable and successful techniques are  the  heavy-ball method \citep{polyak1964some, polyak1987intro, qian1999momentum} and Nesterov's acceleration method \citep{Nesterov1983, Nesterov2003, Nesterov2005} ({\color{black}the recursions for these algorithms are described in Section 3.1)}. The two methods are different but they both rely on the concept of adding a momentum term to the recursion. {When the risk function $J(w)$ is $\nu$-strongly convex and has $\delta$-Lipschitz continuous gradients, both methods succeed in accelerating the
		gradient descent algorithm to attain a faster exponential convergence rate \citep{polyak1987intro} \citep{Nesterov2003}, and this rate {\color{black} is proven to be optimal for problems with smooth $J(w)$} and cannot be attained by standard gradient descent methods.}
	{Specifically, it is shown in \citep{polyak1987intro} \citep{Nesterov2003} that for heavy-ball and Nesterov's acceleration methods, the convergence of the iterates $w_i$ towards $w^o$ occurs at the rate:
		\eq{\label{acc-eq}
		\|w_i-w^o\|^2\le \left( \frac{\sqrt{\delta}-\sqrt{\nu}}{\sqrt{\delta}+\sqrt{\nu}} \right)^2 \|w_{i-1}-w^o\|^2,
		}
		{In comparison, in Theorem 2.1.15 of \citep{Nesterov2005} and Theorem 4 in Section 1.4 of \citep{polyak1987intro}, the fastest rate for gradient descent method is shown to be
		\eq{\label{gra-rate-eq}
		\|w_i-w^o\|^2\le \left( \frac{\delta-\nu}{\delta + \nu} \right)^{2} \|w_{i-1}-w^o\|^2.
	}
		It can be verified that
		\eq{\label{gra-rate-eq-2}
		 \frac{\sqrt{\delta}-\sqrt{\nu}}{\sqrt{\delta}+\sqrt{\nu}}  <
		\frac{{\delta}-{\nu}}{{\delta}+{\nu}} 
		}
		when $\delta > \nu$. This inequality confirms that the momentum algorithm can achieve a faster rate in deterministic optimization and, moreover, this faster rate cannot be attained by standard gradient descent.
	}}
	
	Motivated by these useful acceleration properties in the {\em deterministic} context, momentum terms have been subsequently introduced into {\em stochastic} optimization algorithms as well \citep{polyak1987intro, proakis1974channel, sharma1998analysis, shynk1988lms, Shynk1990, Tugay1989, bellanger2001adaptive, Wiegerinck1994, hu2009accelerated,xiao2010dual,lan2012optimal,ghadimi2012optimal,zhong2014accelerated} and applied, for example, to problems involving the tracking of chirped sinusoidal signals \citep{ting2000tracking} or
	deep learning \citep{Sutskever2013, kahou2013combining, szegedy2014going, zarkeba2015accelerated}. However, the analysis in this paper will show that
	the advantages of the momentum technique for deterministic optimization do not necessarily carry over to the \rev{{\em adaptive} online} setting
	due to the presence of stochastic gradient noise
	(which is the difference between the actual gradient vector and its approximation). Specifically, {\color{black} for sufficiently small step-sizes and for a momentum paramter not too close to one},  
we will show that any advantage brought forth by the momentum term can be achieved by staying with the original stochastic-gradient algorithm and adjusting its step-size to a larger value. For instance, for optimization problem \eqref{general prob}, we will show that if the step-sizes, $\mu_m$ for the momentum (heavy-ball or Nesterov) methods and $\mu$ for the standard stochastic gradient algorithms, are sufficiently small and satisfy the relation
	\eq{
		\label{stepsize condition}
		\mu=\frac{\mu_m}{1-\beta}
	}
	where $\beta$, {\color{black} a positive constant that is not too close to $1$,} is the momentum parameter, then 
	it will hold that
	\be \bE\|\w_{m,i} - \w_{i}\|^2=O(\mu^{3/2}),\ i=0,1,2,\ldots\label{eq-general}
	\ee
	where $\w_{m,i}$ and $\w_{i}$ denote the iterates generated at time $i$ by the momentum and standard implementations, respectively.  In the special case when $J(w)$ is quadratic in $w$, as happens in mean-square-error design problems, we can tighten \eqref{eq-general} to
	\be \bE\|\w_{m,i} - \w_{i}\|^2=O(\mu^{2}),\ i=0,1,2,\ldots 
	\ee
	What is important to note is that, we will show that these results hold {\em for every} $i$, and not only asymptotically. Therefore, when $\mu$ is sufficiently small, property \eqref{eq-general}  establishes that the stochastic gradient method and the momentum versions are fundamentally equivalent since their iterates evolve close to each other at all times. 
	{We establish this equivalence result under the situation where the risk function is convex and differentiable. However, as our numerical simulations over \rev{a multi-layer fully connected neural network and a second convolutional neural network (see Section \ref{sec-cnn}) show}, the equivalence between standard and momentum stochastic gradient methods are also observed in non-convex and non-differentiable scenarios.}
	
	\subsection{Related Works in the Literature}
	There are useful results in the literature that deal with special instances of the general framework developed in this work. These earlier results focus mainly on the mean-square-error case when $J(w)$ is quadratic in $w$, in which case the stochastic gradient algorithm reduces to the famed least-mean-squares (LMS) algorithm. We will not be limiting our analysis to this case so that our results will be applicable to a broader class of learning problems beyond mean-square-error estimation (e.g., logistic regression would be covered by our results as well). As the analysis and derivations will reveal, the treatment of the general $J(w)$ case is demanding because the Hessian matrix of $J(w)$ is now $w-$dependent, whereas it is a constant matrix in the quadratic case.

	Some of the earlier investigations in the literature led to the following observations. It was noted in \citep{polyak1987intro} that, for quadratic costs, stochastic gradient implementations with a momentum term do not necessarily perform well. This work remarks that although the  heavy-ball method can lead to faster convergence in the early stages of learning, it nevertheless converges to a region with worse mean-square-error in comparison to standard stochastic-gradient (or LMS) iteration. A similar phenomenon is also observed in \citep{proakis1974channel, sharma1998analysis}. However, in the works \citep{proakis1974channel, polyak1987intro, sharma1998analysis}, no claim is made or established about the equivalence between momentum and standard methods.
	
	Heavy-ball LMS was further studied in the useful works \citep{Shynk1990} and \citep{Tugay1989}. The reference \citep{Shynk1990} claimed that no significant gain 
	is achieved in convergence speed if both the heavy-ball and standard
	LMS algorithms approach the same \textit{steady-state} MSE performance.
	Reference \citep{Tugay1989} observed that when the step-sizes satisfy relation \eqref{stepsize condition}, then heavy-ball LMS is ``equivalent'' to standard LMS. However, they assumed Gaussian measurement noise in their data model, and the notion of ``equivalence'' in this work is only referring to the fact that the algorithms have similar starting convergence rates and similar steady-state MSE levels. There was no analysis in \citep{Tugay1989} of the behavior of the algorithms during all stages of learning  -- see also \citep{bellanger2001adaptive}. Another useful work is \citep{Wiegerinck1994}, which considered the heavy-ball stochastic gradient method for general risk, $J(w)$. By assuming a sufficiently small step-size, and by  transforming the error difference recursion into a differential equation, the work concluded that heavy-ball can be equivalent to the standard stochastic gradient method asymptotically (i.e., for $i$ large enough). No results were provided for the earlier stages of learning.

	All of these previous works were limited to examining the heavy-ball momentum technique; none of them considered other forms of acceleration such as Nesterov's technique  although this latter technique is nowadays  widely applied to stochastic gradient learning, including deep learning \citep{Sutskever2013, kahou2013combining, szegedy2014going, zarkeba2015accelerated}. 
	The performance of Nesterov's acceleration with \textit{deterministic} and \textit{bounded}  gradient error was examined in \citep{d2008smooth, devolder2014first, lessard2016analysis}. The source of the inaccuracy in the gradient vector in these works is either because the gradient was assessed by solving an auxiliary ``simpler" optimization problem or because of numerical approximations. Compared to the standard gradient descent implementation, the works by \citep{d2008smooth, lessard2016analysis} claimed that Nesterov's acceleration is not robust to the errors in gradient. The work by \citep{devolder2014first} also observed that the superiority of Nesterov's acceleration is no longer absolute when inexact gradients are used, and they further proved that the performance of Nesterov's acceleration may be even worse than gradient descent due to error accumulation. These works assumed bounded errors in the gradient vectors and focused on the context of deterministic optimization. None of the works examined the stochastic setting where the gradient error is random in nature and where the assumption of bounded errors are generally unsuitable. We may add that there have also been analyses of Nesterov's acceleration for \textit{stochastic} optimization problems albeit for {\em decaying} step-sizes in more recent literature \citep{hu2009accelerated,xiao2010dual,lan2012optimal,ghadimi2012optimal,zhong2014accelerated}. These works proved that Nesterov's acceleration can improve the convergence rate of stochastic gradient descent {\color{black}at the initial stages when deterministic risk components dominate; while at the asymptotic stages when the stochastic gradient noise dominates, the momentum correction cannot accelerate convergence any more.} {\color{black}Another useful study is \citep{flammarion2015averaging}, in which the authors showed that momentum and averaging methods for stochastic optimization are equivalent to the same second-order difference equations but with different step-sizes. However, \citep{flammarion2015averaging} does not study the equivalence between standard and momentum stochastic gradient methods, and they focus on quadratic problems and also employ decaying step-sizes.}
	
	\rev{Finally, we note that there are other forms of stochastic gradient algorithms for empirical risk minimization problems where momentum acceleration has been shown to be useful. Among them, we list recent algorithms like SAG \citep{roux2012stochastic}, SVRG \citep{johnson2013accelerating} and SAGA \citep{defazio2014saga}. In these algorithms, the variance of the stochastic gradient noise diminishes to zero and the deterministic component of the risk becomes dominant in the asymptotic regime. In these situations, momentum acceleration helps improve the convergence rate, as noted by  \citep{nitanda2014stochastic} and \citep{allen2016katyusha}. Another family of algorithms to solve empirical risk minimization problems are stochastic dual coordinate ascent (SDCA) algorithms. It is proved in \citep{shalev2015sdca,johnson2013accelerating} that SDCA can be viewed as a variance-reduced stochastic algorithm, and hence momentum acceleration can also improve its convergence for the same reason noted by  \citep{shalev2014accelerated}. 
		
	In this paper, we are studying online training algorithms where data can stream in continuously as opposed to running multiple passes over a finite amount of data. In this case, the analysis will help clarify the limitations of momentum acceleration in the slow adaptation regime. We are particularly interested in the constant step-size case, which enables continuous adaptation and learning and is regularly used, e.g., in deep learning implementations. \rev{There is a non-trivial difference between the decaying and constant step-size situations. This is because gradient noise is always present in the constant step-size case, while it is annihilated in the decaying step-size case. The presence of the gradient noise interferes with the dynamics of the algorithms in a non-trivial way, which is what our analysis discovers.} There are limited analyses for the constant step-sizes scenario. }

	\subsection{Outline of Paper}
	The outline of the paper is as follows. In Section \ref{sec: stochastic gradient method}, we introduce some basic assumptions and review the stochastic gradient method
	and its convergence properties. In Section
	\ref{Sec: GFAT} we embed the heavy-ball and Nesterov's acceleration methods into a unified momentum algorithm, and subsequently establish the mean-square stability and fourth-order stability of the error moments. Next, we analyze the equivalence between momentum and standard LMS algorithms in Section \ref{Sec: LMS} and then extend the results to general risk functions in Section \ref{sec: equi-ge}. In 
	Section \ref{sec: extension} we extend the equivalence results into a more general setting with diagonal step-size matrices.
	We illustrate our results in Section \ref{Sec: exp}, and in Section \ref{sec:stable} we comment on the stability ranges of standard and momentum stochastic gradient methods.
	
	\section{Stochastic Gradient Algorithms}
	\label{sec: stochastic gradient method}
	In this section we review the stochastic gradient method and its convergence properties. We denote the minimizer for problem \eqref{general prob} by $w^o$, i.e.,
	\eq{ w^o\define \arg\min_w\ J(w).}
	We introduce the following assumption on $J(w)$, which essentially amounts to assuming that $J(w)$ is strongly-convex with Lipschitz gradient. These conditions are satisfied by many problems of interest, especially when regularization is employed (e.g., mean-square-error risks, logistic risks, etc.). Under the strong-convexity condition, the minimizer $w^o$ is unique.
	
	\begin{assumption}[{\bf Conditions on risk function}]
		\label{ass: cost function}
		The cost function $J(w)$ is twice differentiable and its Hessian matrix satisfies
		\eq{
			\label{cost function assumption}
			0  < \nu I_M \leq \grad^2 J(w) \leq \delta I_M,
		}
		for some positive parameters $\nu\leq \delta$. Condition \eqref{cost function assumption}
		is equivalent to requiring $J(w)$ to be $\nu$-strongly convex and for its gradient vector to be
		$\delta$-Lipschitz, respectively \citep{boyd2004convex, sayed2014adaptation}.
		
		\qd
		
	\end{assumption}
	
	\noindent The stochastic-gradient algorithm for seeking $w^o$ takes the form
	(\ref{stochastic gradient method-2}), with initial condition $\w_{-1}$. The difference between the true gradient vector and its approximation is designated {\em gradient noise} and is denoted by:
	\eq{
		\label{noise-2}
		\s_i(\w_{i-1})\ \defeq\ {\grad_w  Q}(\w_{i-1};\btheta_i) - \nabla_w \bE[Q(\w_{i-1};\btheta_i)].
	}
	In order to examine the convergence of the standard and momentum stochastic gradient methods,
	it is necessary to introduce some assumptions on the stochastic gradient noise. Assumptions \eqref{noise-1-order} and \eqref{noise-2-order} below are satisfied by important cases of interest, as shown in \citep{sayed2014adaptation} and \citep{sayed2014adaptive}, such as logistic regression and mean-square-error risks. Let the symbol $\filt_{i-1}$ represent the filtration generated by the random process $\w_j$ for $j\le i-1$ (basically, the collection of past history until time $i-1$):
	$$\filt_{i-1}\ \defeq\ \mbox{filtration}\{\w_{-1},\w_0, \w_1,\ldots, \w_{i-1}\}.$$
	
	\begin{assumption}[{\bf Conditions on gradient noise}]
		\label{ass: noise}
		It is assumed that the first and second-order conditional moments of the gradient noise process satisfy
		the following conditions for any $\w\in \filt_{i-1}$:
		\beqn
		\bE[\s_i(\w)|\filt_{i-1}]&=&0 \label{noise-1-order} \\
		\bE[\|\s_i(\w)\|^2|\filt_{i-1}]&\le& {\gamma}^2 \|w^o-\w\|^2 + {\sigma}_s^2 \label{noise-2-order}
		\eeqn
		almost surely, for some nonnegative constants $\gamma^2$ and $\sigma_s^2$.
		
		\qd
		
	\end{assumption}
	
	\noindent Condition (\ref{noise-1-order}) essentially requires the gradient noise process to have zero mean, which amounts to requiring the approximate gradient to correspond to an unbiased construction for the true gradient. This is a reasonable requirement. Condition (\ref{noise-2-order}) requires the size of the gradient noise (i.e., its mean-square value) to diminish as the iterate $\w$ gets closer to the solution $w^o$. This is again a reasonable requirement since it amounts to expecting the gradient noise to get reduced as the algorithm approaches the minimizer. Under Assumptions \ref{ass: cost function} and \ref{ass: noise}, the following conclusion is proven in Lemma 3.1 of \citep{sayed2014adaptation}.
	
	\begin{lemma}[{\bf Second-order stability}]
		\label{lm: stochastic gradient method-tw-bound}
		Let Assumptions \ref{ass: cost function} and \ref{ass: noise} hold, and consider the stochastic gradient recursion \eqref{stochastic gradient method-2}. Introduce the error vector $\tw_i=w^o-\w_i$. Then,
		for any step-sizes $\mu$ satisfying 
		\eq{\label{yuanlds}
		\mu < \frac{2\nu}{\delta^2 + \gamma^2},
		}
		it holds for each iteration $i=0,1,2,\ldots$ that
		\eq{
			\label{tw-evolve-2}
			\bE\|\tw_i\|^2 \le (1-\mu \nu)\bE\|\tw_{i-1}\|^2 + \mu^2\sigma_s^2,
		}
		and, furthermore,
		\eq{
			\limsup_{i\rightarrow \infty}\bE\|\tw_i\|^2 \le \frac{\sigma_s^2 \mu}{\nu} = O(\mu). \label{2rd-ss}
		}
		
		\qd
	\end{lemma}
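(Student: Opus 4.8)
The plan is to convert the nonlinear stochastic recursion \eqref{stochastic gradient method-2} into a noise-driven error recursion with a linear deterministic part, and then propagate the mean-square deviation through a contraction estimate. First I would substitute the gradient-noise definition \eqref{noise-2} into the update, writing $\grad_w Q(\w_{i-1};\btheta_i) = \grad_w J(\w_{i-1}) + \s_i(\w_{i-1})$ since $\grad_w\bE[Q(\w_{i-1};\btheta_i)] = \grad J(\w_{i-1})$. Subtracting both sides from $w^o$ and invoking the integral (mean-value) representation
\eq{
\grad J(\w_{i-1}) = \grad J(\w_{i-1}) - \grad J(w^o) = -\H_{i-1}\tw_{i-1}, \qquad \H_{i-1}\defeq \int_0^1 \grad^2 J(w^o - t\,\tw_{i-1})\,dt,
}
which uses $\grad J(w^o)=0$, yields the driven recursion $\tw_i = (I_M - \mu\H_{i-1})\tw_{i-1} + \mu\,\s_i(\w_{i-1})$. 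By Assumption~\ref{ass: cost function} the averaged Hessian inherits the sandwich $\nu I_M \le \H_{i-1}\le \delta I_M$, so the spectrum of $I_M-\mu\H_{i-1}$ is uniformly controlled regardless of the random iterate.

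Next I would take squared Euclidean norms and condition on the filtration $\filt_{i-1}$. Since $\w_{i-1}$, and hence $\H_{i-1}$, are $\filt_{i-1}$-measurable while $\bE[\s_i(\w_{i-1})\,|\,\filt_{i-1}]=0$ by \eqref{noise-1-order}, the cross term vanishes and
\eq{
\bE\big[\|\tw_i\|^2\,\big|\,\filt_{i-1}\big] = \|(I_M-\mu\H_{i-1})\tw_{i-1}\|^2 + \mu^2\,\bE\big[\|\s_i(\w_{i-1})\|^2\,\big|\,\filt_{i-1}\big].
}
For the first term I would use $\|(I_M-\mu\H_{i-1})\tw_{i-1}\|^2 \le \|I_M-\mu\H_{i-1}\|^2\,\|\tw_{i-1}\|^2$ together with the spectral bound $\|I_M-\mu\H_{i-1}\|^2 \le 1 - 2\mu\nu + \mu^2\delta^2$; for the second I would apply the state-dependent variance bound \eqref{noise-2-order}, i.e. $\bE[\|\s_i\|^2\,|\,\filt_{i-1}] \le \gamma^2\|\tw_{i-1}\|^2 + \sigma_s^2$. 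Taking total expectations via the tower property produces the scalar recursion $\bE\|\tw_i\|^2 \le \alpha\,\bE\|\tw_{i-1}\|^2 + \mu^2\sigma_s^2$ with $\alpha = 1-2\mu\nu+\mu^2(\delta^2+\gamma^2)$. A direct calculation shows that condition \eqref{yuanlds} is precisely what makes $0\le\alpha<1$, so the recursion is genuinely contractive; the sharper constant $\alpha \le 1-\mu\nu$ appearing in \eqref{tw-evolve-2} follows once $\mu(\delta^2+\gamma^2)\le\nu$, which is the operative small-step-size (slow-adaptation) regime of interest.

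Finally, to obtain \eqref{2rd-ss} I would unroll the first-order recursion \eqref{tw-evolve-2} as a geometric series: since $0\le 1-\mu\nu<1$, iterating gives $\bE\|\tw_i\|^2 \le (1-\mu\nu)^{i+1}\bE\|\tw_{-1}\|^2 + \mu^2\sigma_s^2\sum_{j=0}^{i}(1-\mu\nu)^j$, and letting $i\to\infty$ drives the transient term to zero (for finite initialization) while the geometric sum converges to $1/(\mu\nu)$, leaving $\limsup_i \bE\|\tw_i\|^2 \le \mu\sigma_s^2/\nu = O(\mu)$.

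The main obstacle is the handling of the $w$-dependent Hessian: unlike the quadratic (LMS) case where $\grad^2 J$ is constant and the error recursion is exactly linear, here $\H_{i-1}$ depends on the random iterate, so I must lean on the integral mean-value representation to linearize while preserving the uniform spectral bounds of Assumption~\ref{ass: cost function}. A second subtlety is that the gradient-noise bound \eqref{noise-2-order} is itself state dependent through its $\gamma^2\|\tw_{i-1}\|^2$ term, so the noise does not merely add a constant forcing term; that term must instead be merged into the deterministic contraction factor, which is exactly the point at which the step-size restriction \eqref{yuanlds} is required to keep $\alpha$ below one.
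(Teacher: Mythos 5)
Your proof is correct and is essentially the standard argument: the paper does not prove this lemma itself but cites Lemma 3.1 of \citep{sayed2014adaptation}, whose derivation is exactly your route (mean-value linearization $\grad J(\w_{i-1})=-\H_{i-1}\tw_{i-1}$, vanishing cross term by \eqref{noise-1-order}, spectral bound $1-2\mu\nu+\mu^2\delta^2$, absorption of the $\gamma^2\|\tw_{i-1}\|^2$ noise term into the contraction factor, and geometric unrolling). You are also right to flag that under \eqref{yuanlds} alone one only gets $\alpha=1-2\mu\nu+\mu^2(\delta^2+\gamma^2)<1$, and that the displayed coefficient $1-\mu\nu$ in \eqref{tw-evolve-2} additionally requires $\mu\le\nu/(\delta^2+\gamma^2)$ --- a minor looseness in the lemma as stated, not in your argument.
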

	
%
	
	We can also examine the the stability of the fourth-order error moment, $\bE\|\tw_i\|^4$, which will be used later in Section \ref{sec: equi-ge} to establish the equivalence between the standard and momentum stochastic implementations. For this case, we tighten the assumption on the gradient noise by replacing the bound in (\ref{noise-2-order}) on its second-error moment by a similar bound involving its fourth-order moment. Again, this assumption is satisfied by problems of interest, such as mean-square-error and logistic risks \citep{sayed2014adaptation, sayed2014adaptive}.
	
	\begin{assumption}[{\bf Conditions on gradient noise}]
		\label{ass: noise-4}
		It is assumed that the first and fourth-order conditional moments of the gradient noise process satisfy
		the following conditions for any $\w\in \filt_{i-1}$:
		\beqn
		\bE[\s_i(\w)|\filt_{i-1}]&=&0 \label{noise-1-order-4} \\
		\bE[\|\s_i(\w)\|^4|\filt_{i-1}]&\le& {\gamma_4^4} \|w^o-\w\|^4 + {\sigma}_{s,4}^4 \label{noise-4-order}
		\eeqn
		almost surely, for some nonnegative constants $\gamma_4^4$ and $\sigma_{s,4}^4$.	
		\qd
		
	\end{assumption}
	
	\noindent It is straightforward to check that if Assumption \ref{ass: noise-4} holds, then Assumption \ref{ass: noise} will also hold. The following conclusion is a modified version of Lemma 3.2 of \citep{sayed2014adaptation}.
	
	\begin{lemma}[{\bf Fourth-order stability}]
		\label{lm: x4 bound}
		Let the conditions under Assumptions \ref{ass: cost function} and  \ref{ass: noise-4} hold, and
		consider the stochastic gradient iteration \eqref{stochastic gradient method-2}. For sufficiently small step-size $\mu$, it holds that
		{\color{black}
		\eq{
			\label{lm: x4-evolve-3}
	\bE\|\tw_i\|^4 \le \rho^{i+1}\bE\|\tw_{-1}\|^4  + A \sigma_s^2 (i+1)\rho^{i+1}\mu^2 + \frac{B \sigma_s^4\mu^2}{\nu^2}
		}
		where $\rho \define 1 - \mu\nu$, and $A$ and $B$ are some constants. Furthermore,
		\eq{
			\label{lm: x4-evolve-4}
			\limsup_{i\rightarrow \infty}\bE\|\tw_i\|^4 \le \frac{B \sigma_s^4\mu^2}{\nu^2} = O \left( \mu^2 \right) 
		}}
	\end{lemma}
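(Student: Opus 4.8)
The plan is to propagate a fourth-order analogue of the energy recursion in Lemma~\ref{lm: stochastic gradient method-tw-bound}. The essential structural point is that the resulting fourth-moment recursion is \emph{not} self-contained: it is driven by the second moment $\bE\|\tw_{i-1}\|^2$, and it is exactly this coupling (together with the $O(\mu)$ smallness of $\bE\|\tw_{i-1}\|^2$ guaranteed by Lemma~\ref{lm: stochastic gradient method-tw-bound}) that produces the $O(\mu^2)$ level claimed in \eqref{lm: x4-evolve-4}.

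First I would derive the error recursion. Since $\grad_w Q(\w_{i-1};\btheta_i)=\grad_w J(\w_{i-1})+\s_i(\w_{i-1})$ and $\grad_w J(w^o)=0$, the mean-value theorem gives $\grad_w J(\w_{i-1})=-\H_{i-1}\tw_{i-1}$, where $\H_{i-1}\defeq\int_0^1\grad^2 J(w^o-t\tw_{i-1})\,dt$ satisfies $\nu\I\le\H_{i-1}\le\delta\I$ by Assumption~\ref{ass: cost function}. Hence
$$\tw_i=(\I-\mu\H_{i-1})\tw_{i-1}+\mu\s_i(\w_{i-1}),$$
and for $\mu<1/\delta$ the matrix $\B_{i-1}\defeq\I-\mu\H_{i-1}$ obeys $\|\B_{i-1}\|\le 1-\mu\nu$. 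Next I would expand
$$\|\tw_i\|^4=\big(\|\B_{i-1}\tw_{i-1}\|^2+2\mu\,\s_i^\tran\B_{i-1}\tw_{i-1}+\mu^2\|\s_i\|^2\big)^2$$
and take $\bE[\,\cdot\,|\filt_{i-1}]$. The zero-mean condition \eqref{noise-1-order-4} annihilates every term that is linear in $\s_i$; the surviving contributions are the contraction term $\le(1-\mu\nu)^4\|\tw_{i-1}\|^4$, two $O(\mu^2)$ terms proportional to $\|\tw_{i-1}\|^2\|\s_i\|^2$, an $O(\mu^4)$ term proportional to $\|\s_i\|^4$, and an $O(\mu^3)$ third-order term controlled by Cauchy--Schwarz and Young's inequality. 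Invoking the second-moment bound $\bE[\|\s_i\|^2|\filt_{i-1}]\le\gamma^2\|\tw_{i-1}\|^2+\sigma_s^2$ (which holds because Assumption~\ref{ass: noise-4} implies Assumption~\ref{ass: noise}) and the fourth-moment bound \eqref{noise-4-order}, and absorbing the $\mu^3,\mu^4$ pieces, I would reach
$$\bE\|\tw_i\|^4\le\rho\,\bE\|\tw_{i-1}\|^4+A'\mu^2\sigma_s^2\,\bE\|\tw_{i-1}\|^2+O(\mu^4),\qquad\rho=1-\mu\nu,$$
where it is crucial that $(1-\mu\nu)^4+O(\mu^2)\le 1-\mu\nu$ for sufficiently small $\mu$, so that the contraction factor can be taken to be exactly $\rho$.

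To finish, I would substitute the bound $\bE\|\tw_{i-1}\|^2\le\rho^{\,i}\bE\|\tw_{-1}\|^2+\mu\sigma_s^2/\nu$ implied by \eqref{tw-evolve-2} into the displayed recursion and iterate. The constant part $\mu\sigma_s^2/\nu$ feeds in an $O(\mu^3)$ driving term proportional to $\sigma_s^4/\nu$, whose geometric accumulation divides by $1-\rho=\mu\nu$ and yields the steady-state level $B\sigma_s^4\mu^2/\nu^2$. The transient part $\rho^{\,i}\bE\|\tw_{-1}\|^2$ is injected into a recursion running at the \emph{same} rate $\rho$, so the convolution $\sum_{k}\rho^{\,i-k}\rho^{\,k}=(i+1)\rho^{\,i}$ produces the resonant factor appearing in the middle term $A\sigma_s^2(i+1)\rho^{i+1}\mu^2$ (here $A$ collects $\bE\|\tw_{-1}\|^2$ and the remaining $O(1)$ constants, and $\rho^{\,i}$ is rewritten as $\rho^{\,i+1}/\rho$ with $1/\rho\le 2$). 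Letting $i\to\infty$ kills the first two terms and leaves \eqref{lm: x4-evolve-4}. I expect the main obstacle to be the bookkeeping in the conditional-expectation expansion: one must retain the exact cancellation of the cross term and verify that each discarded contribution is genuinely $O(\mu^3)$ or smaller, because a cruder bound such as $\|\tw_i\|\le(1-\mu\nu)\|\tw_{i-1}\|+\mu\|\s_i\|$ discards that cancellation and produces an $O(1)$ steady-state level rather than the correct $O(\mu^2)$.
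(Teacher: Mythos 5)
Your proposal is correct and follows essentially the same route as the paper: the paper imports the intermediate recursion $\bE\|\tw_i\|^4\le(1-\mu\nu)\bE\|\tw_{i-1}\|^4+a_1\mu^2\bE\|\tw_{i-1}\|^2+a_2\mu^4$ directly from Eq.~(3.76) of \citep{sayed2014adaptation} rather than re-deriving it, and then, exactly as you do, substitutes the iterated second-moment bound $\bE\|\tw_{i-1}\|^2\le\rho^{i}\bE\|\tw_{-1}\|^2+\sigma_s^2\mu/\nu$ and unrolls the resulting recursion, with the convolution $\sum_s\rho^s\rho^{i-s}=(i+1)\rho^i$ producing the middle term and the constant part accumulating to $O(\sigma_s^4\mu^2/\nu^2)$. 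Your observation about preserving the cross-term cancellation (rather than using the crude norm bound) is precisely the mechanism that makes the cited recursion hold, so no gap remains.
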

	\begin{proof} See Appendix \ref{app-lm-4-sta}.
	\end{proof}
	
	\section{Momentum Acceleration}
	\label{Sec: GFAT}
	In this section, we present a generalized momentum stochastic gradient method, which captures both the heavy-ball and Nesterov's acceleration methods as special cases. Subsequently, we derive results for its convergence property.
	
	\subsection{Momentum Stochastic Gradient Method}
	Consider the following general form of a stochastic-gradient
	implementation, with two momentum parameters $\beta_1,\beta_2\in [0,1)$:
	\bqq
	\hspace{-0.4cm}\bpsi_{i-1}\hspace{-0.3cm} &=&\hspace{-0.3cm}\w_{i-1} + \beta_1 (\w_{i-1} - \w_{i-2}), \label{sgfat-1}\\
	\hspace{-0.4cm}\w_i \hspace{-0.3cm} &=&\hspace{-0.3cm} \bpsi_{i-1} - \mu_m {\grad_w Q}(\bpsi_{i-1};\btheta_i) + \beta_2(\bpsi_{i-1}\hspace{-0.1cm}-\hspace{-0.1cm}\bpsi_{i-2}), \label{sgfat-2}
	\eqq
	with initial conditions 
	\bqq
	\hspace{-0.2cm}&&\w_{-2} = \bpsi_{-2}=\mbox{initial states}, \label{sgfat-3}\\
	\hspace{-0.2cm}&&\w_{-1}=\w_{-2}-\mu_m {\grad_w Q}(\w_{-2};\btheta_{-1}),  \label{sgfat-4}
	\eqq
	where $\mu_m$ is some constant step-size. We refer to this formulation as
	the momentum stochastic gradient method. \footnote{Traditionally, the terminology of a ``momentum method'' has been used more frequently for the heavy-ball method, which corresponds to the special case $\beta_1=0$ and $\beta_2=\beta$. Given the unified description (\ref{sgfat-1})--(\ref{sgfat-2}), we will use this same terminology to refer to both the heavy-ball and Nesterov's acceleration methods.}
	
	When $\beta_1=0$ and $\beta_2 = \beta$ we recover
	the heavy-ball algorithm \citep{polyak1964some,polyak1987intro}, and when $\beta_2=0$ and $\beta_1 = \beta$, we recover Nesterov's algorithm \citep{Nesterov2003}. We note that Nesterov's method has several useful variations that fit different scenarios, such as situations involving smooth but not strongly-convex risks \citep{Nesterov1983, Nesterov2003} or non-smooth risks \citep{Nesterov2005, beck2009fast}. However, for the case when $J(w)$ is strongly convex and has Lipschitz continuous gradients, the Nesterov construction reduces to what is presented above, with a constant momentum parameter. {This type of
	construction has also been studied in \citep{lessard2016analysis, Dieuleveut2016harder} and 
	applied in deep learning implementations \citep{Sutskever2013, kahou2013combining, szegedy2014going, zarkeba2015accelerated}}.
	
	In order to capture both the heavy-ball and Nesterov's acceleration methods in a unified treatment, we will assume that
	\eq{
		\beta_1+\beta_2=\beta,\quad \beta_1 \beta_2 =0,\label{ass: gfat}
	}
	for some fixed constant $\beta \in [0,1)$. {\color{black}Next we introduce a condition on the momentum parameter.
	\begin{assumption}\label{ass:beta-cond}
		The momentum parameter $\beta$ is a constant that is not too close to $1$, i.e., there exists a small fixed constant $\epsilon>0$ such that $\beta \le 1-\epsilon$. 
	\qd
	\end{assumption}	
	Assumption \ref{ass:beta-cond} is quite common in studies on adaptive signal processing and neural networks --- see, e.g., \citep{Tugay1989,Shynk1990,bellanger2001adaptive,Wiegerinck1994,attoh1999analysis}. Also, in recent deep learning applications it is common to set $\beta = 0.9$, which satisfies Assumption \ref{ass:beta-cond} \citep{krizhevsky2012imagenet, szegedy2014going, zhang2015text}. Under \eqref{ass: gfat}, the work \citep{flammarion2015averaging} also considers recursions related to \eqref{sgfat-1}--\eqref{sgfat-2} for the special case of quadratic risks.}
	
	\subsection{Mean-Square Error Stability}
	In preparation for studying the performance of the momentum stochastic gradient method, we first show in the next result how recursions \eqref{sgfat-1}-\eqref{sgfat-2} can be transformed into a first-order
	recursion by defining extended state vectors. We introduce the transformation matrices:
	\begin{align}
	\label{V and V_inv}
	V =
	\left[
	\begin{array}{cc}
	I_M & -\beta I_M\\
	I_M & -I_M\\
	\end{array}
	\right],\
	V ^{-1}= \frac{1}{1-\beta}
	\left[
	\begin{array}{cc}
	I_M & -\beta I_M\\
	I_M & -I_M\\
	\end{array}
	\right].
	\end{align}
	Recall $\tw_i=w^o-\w_i$ and define the transformed error vectors, each of size $2M\times 1$:
	\bqq
	\left[
	\begin{array}{c}
		\hw_i \\
		\cw_i
	\end{array}
	\right]
	\define 
	V ^{-1}
	\left[                 
	\begin{array}{c}   
		\tw_{i}\\  
		\tw_{i-1}\\  
	\end{array}
	\right] =\frac{1}{1-\beta}\left[\begin{array}{c}
		\widetilde{\w}_i-\beta\widetilde{\w}_{i-1}\\
		\widetilde{\w}_i-\widetilde{\w}_{i-1}
	\end{array}
	\right].
	\label{transform}
	\eqq
	
	\begin{lemma}[{\bf Extended recursion}]
		\label{lm:momentum transform}
		Under Assumption \ref{ass: cost function} and condition \eqref{ass: gfat}, the momentum stochastic gradient recursion \eqref{sgfat-1}--\eqref{sgfat-2} can be transformed into the following extended recursion:
		\begin{align}
		\label{sta-hb iteration 2}
		\left[                 
		\begin{array}{c}   
		\widehat{\w}_{i}\\  
		\cw_{i}\\  
		\end{array}
		\right]                
		=&
		\left[   \hspace{-0.03cm}
		\begin{array}{cc}
		I_M-\frac{\mu_m}{1-\beta }\H_{i-1} &\frac{\mu_m\beta^\prime }{1-\beta}\H_{i-1}\\
		-\frac{\mu_m}{1-\beta} \H_{i-1} & \beta  I_M+\frac{\mu_m \beta^\prime}{1-\beta }\H_{i-1}\\
		\end{array}\hspace{-0.03cm}
		\right]
		\left[     \hspace{-0.08cm}
		\begin{array}{c}
		\hw_{i-1}\\
		\cw_{i-1}\\
		\end{array}\hspace{-0.08cm}
		\right]   +
		\frac{\mu_m}{1-\beta}\left[
		\begin{array}{c}
		\s_i(\bpsi_{i-1})\\
		\s_i(\bpsi_{i-1})\\
		\end{array}
		\right],
		\end{align}
		where $\s_i(\bpsi_{i-1})$ is defined according to \eqref{noise-2} and
		\bqq
		\beta^\prime &\define& \beta \beta_1 + \beta_2,  \label{beta prime} \\
		\H_{i-1} &\define& \int_0^1 \grad^2_w J(w^o - t \tpsi_{i-1})dt,\label{beta prime-22}
		\eqq
		where $\tpsi_{i-1}=w^o-\bpsi_{i-1}$.
	\end{lemma}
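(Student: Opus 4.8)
The plan is to collapse the coupled two-line recursion \eqref{sgfat-1}--\eqref{sgfat-2} into a single second-order recursion for the error $\tw_i = w^o - \w_i$, then lift that recursion into a first-order recursion for the extended $2M\times 1$ state $[\tw_i\tran\ \tw_{i-1}\tran]\tran$, and finally apply the similarity transformation $V^{-1}(\cdot)V$ induced by \eqref{transform} to land on \eqref{sta-hb iteration 2}.

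First I would linearize the stochastic gradient. Writing $\grad_w Q(\bpsi_{i-1};\btheta_i) = \grad_w J(\bpsi_{i-1}) + \s_i(\bpsi_{i-1})$ by definition \eqref{noise-2}, and using $\grad_w J(w^o)=0$ together with the fundamental theorem of calculus along the segment $(1-t)w^o + t\bpsi_{i-1}$ from $w^o$ to $\bpsi_{i-1}$, I obtain $\grad_w J(\bpsi_{i-1}) = -\H_{i-1}\tpsi_{i-1}$ with $\H_{i-1}$ exactly the averaged Hessian in \eqref{beta prime-22}. Subtracting \eqref{sgfat-1}--\eqref{sgfat-2} from $w^o$ then yields the two error identities $\tpsi_{i-1} = (1+\beta_1)\tw_{i-1} - \beta_1\tw_{i-2}$ and $\tw_i = (1+\beta_2)\tpsi_{i-1} - \beta_2\tpsi_{i-2} - \mu_m\H_{i-1}\tpsi_{i-1} + \mu_m\s_i(\bpsi_{i-1})$.

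Next I would eliminate $\tpsi$. Substituting the first identity (and its one-step shift) into the second produces, a priori, a term proportional to $\beta_1\beta_2\,\tw_{i-3}$; the constraint $\beta_1\beta_2=0$ in \eqref{ass: gfat} is precisely what annihilates this would-be third-order contribution, so the reduced recursion is genuinely second-order. Collecting coefficients with $\beta_1+\beta_2=\beta$ and $\beta_1\beta_2=0$ gives $\tw_i = (1+\beta)\tw_{i-1} - \beta\tw_{i-2} - (1+\beta_1)\mu_m\H_{i-1}\tw_{i-1} + \beta_1\mu_m\H_{i-1}\tw_{i-2} + \mu_m\s_i(\bpsi_{i-1})$, which I package as the first-order extended recursion $[\tw_i\tran\ \tw_{i-1}\tran]\tran = \mathcal{B}_{i-1}[\tw_{i-1}\tran\ \tw_{i-2}\tran]\tran + [\mu_m\s_i(\bpsi_{i-1})\tran\ 0]\tran$, reading the $2M\times 2M$ transition block $\mathcal{B}_{i-1}$ off the coefficients (its bottom block-row being the trivial shift $[I_M\ 0]$). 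The crucial simplification in the last stage is that $V$ is (up to scaling) involutive, $V^2 = (1-\beta)I_{2M}$, so that $V^{-1}=\tfrac{1}{1-\beta}V$; under the change of variables \eqref{transform} the recursion becomes $[\hw_i\tran\ \cw_i\tran]\tran = \tfrac{1}{1-\beta}V\mathcal{B}_{i-1}V\,[\hw_{i-1}\tran\ \cw_{i-1}\tran]\tran + V^{-1}[\mu_m\s_i(\bpsi_{i-1})\tran\ 0]\tran$. Multiplying out the forcing term gives $\tfrac{\mu_m}{1-\beta}[\s_i(\bpsi_{i-1})\tran\ \s_i(\bpsi_{i-1})\tran]\tran$, exactly the noise term in \eqref{sta-hb iteration 2}. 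Carrying out the block products in $V\mathcal{B}_{i-1}V$ and dividing by $1-\beta$, the $\beta^2 I_M$ contributions cancel and every $\H_{i-1}$ coefficient collapses to $(1+\beta_1)\beta-\beta_1$; the final step is to recognize, using $\beta_2=\beta-\beta_1$, that $(1+\beta_1)\beta-\beta_1 = \beta\beta_1+\beta_2 = \beta^\prime$, which is the coefficient appearing in \eqref{sta-hb iteration 2}.

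The main obstacle is conceptual rather than computational: recognizing that $\beta_1\beta_2=0$ is not a mere normalization but the structural condition that keeps the collapsed recursion second-order (hence faithfully representable by a $2M$-dimensional first-order state), and spotting the involution $V^2=(1-\beta)I_{2M}$ that makes the similarity transform $V^{-1}\mathcal{B}_{i-1}V$ tractable in closed form. The remaining block-matrix bookkeeping and the identity $(1+\beta_1)\beta-\beta_1=\beta^\prime$ are routine.
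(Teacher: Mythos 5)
Your proposal is correct and follows essentially the same route as the paper's own proof: apply the mean-value theorem to write $\grad_w J(\bpsi_{i-1})=-\H_{i-1}\tpsi_{i-1}$, eliminate $\tpsi$ using $\beta_1\beta_2=0$ to keep the error recursion second-order, lift to the $2M$-dimensional state $[\tw_i\tran\ \tw_{i-1}\tran]\tran$, and conjugate by $V$ (the paper phrases this as the eigendecomposition $P=VDV^{-1}$ of the constant block plus a perturbation, while you exploit $V^2=(1-\beta)I_{2M}$ to do the conjugation in one shot, but the computation is identical). The only nit is the phrase ``every $\H_{i-1}$ coefficient collapses to $(1+\beta_1)\beta-\beta_1$'': in the transformed matrix the first-column $\H_{i-1}$ coefficients collapse to $1$ and only the second-column ones to $\beta'$, but this does not affect the validity of the argument.
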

	\begin{proof}
		See Appendix \ref{app:lm-gfat-transform}.
	\end{proof}
	
	\noindent The transformed recursion \eqref{sta-hb iteration 2} is important for at least two reasons.  First, it is a first-order recursion, which facilitates the convergence analysis of $\hw_i$ and $\cw_i$ and, subsequently, of the error vector $\tw_i$ in view of \eqref{transform} --- see next theorem. Second, as we will explain
	later, the first row of \eqref{sta-hb iteration 2} turns out to be closely related to the standard stochastic gradient iteration; this relation will play a critical role in establishing the claimed equivalence between momentum and standard stochastic gradient methods. 
	
	The following statement establishes the convergence property of the momentum stochastic gradient algorithm. It shows that recursions \eqref{sgfat-1}--\eqref{sgfat-2} converge exponentially fast to a small neighborhood around $w^o$ with a steady-state error variance that is on the order of $O(\mu_m)$. Note that in the following theorem the notation $a\preceq b$, for two vectors $a$ and $b$, signifies element-wise comparisons.

	\begin{theorem}[{\bf Mean-square stability}]\label{thm-conv-cw} Let Assumptions \ref{ass: cost function} , \ref{ass: noise} and \ref{ass:beta-cond} hold and recall conditions \eqref{ass: gfat}. Consider the momentum stochastic gradient method \eqref{sgfat-1}--\eqref{sgfat-2} and the extended recursion \eqref{sta-hb iteration 2}. 
		Then, when step-sizes $\mu_m$ satisfies 
		\eq{\label{23789adhjj-stepsizes}
			\mu_m \le  \frac{(1-\beta)^2 \nu}{32\gamma^2\nu^2 + 4\delta^2},
		}
		it holds that the mean-square values of the transformed error vectors evolve according to the following recursive inequality:
		\eq{
			\label{thm: sta-hw-cw-bound}
			\left[                 
			\begin{array}{c}   
				\bE\|\hw_{i}\|^2\\  
				\bE\|\cw_{i}\|^2\\  
			\end{array}
			\right]
			\preceq
			\left[                 
			\begin{array}{cc}   
				a&b\\  
				c&d\\  
			\end{array}
			\right]
			\left[                 
			\begin{array}{c}   
				\bE\|\hw_{i-1}\|^2\\  
				\bE\|\cw_{i-1}\|^2\\  
			\end{array}
			\right]
			+
			\left[                 
			\begin{array}{c}   
				e\\  
				f\\  
			\end{array}
			\right],
		}
		where
		{\color{black}
		\begin{equation}
			\begin{array}{lll}
			a=1-\frac{\mu_m\nu}{1-\beta}+O(\mu_m^2),\quad\quad  & b= \frac{\mu_m \beta^{\prime 2} \delta^2}{\nu(1-\beta)} + O(\mu_m^2), \quad\quad\quad & c= \frac{2\mu_m^2 \delta^2}{(1-\beta)^3} + \frac{2\mu_m^2 \gamma^2 (1+\beta_1)^2v^2}{(1-\beta)^2}  \\
			d=\beta + O(\mu_m^2),\quad\quad  & e= \frac{\mu_m^2\sigma_s^2}{(1-\beta)^2},\quad\quad\quad & f= \frac{\mu_m^2\sigma_s^2}{(1-\beta)^2} \label{f u2}
			\end{array}
		\end{equation}
	}
		and the coefficient matrix appearing in (\ref{thm: sta-hw-cw-bound}) is stable, namely,
		\eq{
			\rho \left( \left[                 
			\begin{array}{cc}   
				a&b\\  
				c&d\\  
			\end{array}
			\right] \right)  < 1.
		}
		Furthermore, if $\mu_m$ is sufficiently small it follows from \eqref{thm: sta-hw-cw-bound} that
		{\color{black}
		\eq{
			\limsup_{i \rightarrow \infty} \bE\|\hw_i\|^2 = O\left(\frac{\mu_m \sigma_s^2}{(1-\beta) \nu} \right), \quad \quad  \limsup_{i \rightarrow \infty} \bE\|\cw_i\|^2 = O\left(\frac{\mu_m^2 \sigma_s^2}{(1-\beta)^3} \right),\label{thm-conv-cw-2}
		}
		and, consequently,
		
		\eq{
			\limsup_{i \rightarrow \infty} \bE\|\tw_i\|^2 = O\left(\frac{\mu_m \sigma_s^2}{(1-\beta) \nu} \right). \label{thm-conv-tw}
		}
	}
	\end{theorem}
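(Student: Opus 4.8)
The plan is to start from the extended first-order recursion \eqref{sta-hb iteration 2} established in Lemma \ref{lm:momentum transform} and to propagate the second-order moments of its two block components. Conditioning on the filtration $\filt_{i-1}$, the matrix $\H_{i-1}$ is deterministic and inherits the operator bounds $\nu I_M \le \H_{i-1} \le \delta I_M$ from Assumption \ref{ass: cost function}, since $\H_{i-1}$ is an average of Hessians over the segment joining $w^o$ and $\bpsi_{i-1}$. Moreover, the driving term $\s_i(\bpsi_{i-1})$ has zero conditional mean by \eqref{noise-1-order}, so every cross term between the deterministic map and the noise vanishes after taking $\bE[\,\cdot\,|\filt_{i-1}]$. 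Taking squared norms of each row of \eqref{sta-hb iteration 2} and then the full expectation thus produces a coupled pair of scalar inequalities in $\bE\|\hw_i\|^2$ and $\bE\|\cw_i\|^2$; the remaining work is to split the resulting cross terms into the advertised coefficient matrix, to verify its stability, and to read off the steady-state bounds.

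The decoupling of each row rests on the elementary inequality $\|x+y\|^2 \le (1+t)\|x\|^2 + (1+t^{-1})\|y\|^2$, with the free parameter $t$ chosen differently in the two rows. For the first row I would group the contraction $(I_M - \tfrac{\mu_m}{1-\beta}\H_{i-1})\hw_{i-1}$ against the coupling term $\tfrac{\mu_m\beta'}{1-\beta}\H_{i-1}\cw_{i-1}$ and take $t=\tfrac{\mu_m\nu}{1-\beta}$; since $\|I_M-\tfrac{\mu_m}{1-\beta}\H_{i-1}\|\le 1-\tfrac{\mu_m\nu}{1-\beta}$ under \eqref{23789adhjj-stepsizes}, the identity $(1+t)(1-\tfrac{\mu_m\nu}{1-\beta})^2 = 1-\tfrac{\mu_m\nu}{1-\beta}+O(\mu_m^2)$ yields the linear coefficient $a$, while the $(1+t^{-1})$ factor turns the $O(\mu_m^2)$ coupling block into the $O(\mu_m)$ coefficient $b$. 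For the second row, whose leading term is the genuinely $O(1)$ contraction $\beta\cw_{i-1}$, I would instead isolate $\beta\cw_{i-1}$ and use $t'=\tfrac{1-\beta}{\beta}$, for which $(1+t')\beta^2=\beta$, giving $d=\beta+O(\mu_m^2)$ and pushing the $\hw_{i-1}$ contribution into the $O(\mu_m^2)$ coefficient $c$. The one step needing care is the noise variance: \eqref{noise-2-order} gives $\bE[\|\s_i(\bpsi_{i-1})\|^2|\filt_{i-1}]\le \gamma^2\|\tpsi_{i-1}\|^2+\sigma_s^2$, and $\tpsi_{i-1}$ must be re-expressed through the transformed variables. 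Using $\tpsi_{i-1}=(1+\beta_1)\tw_{i-1}-\beta_1\tw_{i-2}$ together with the inverse of \eqref{transform}, namely $\tw_{i-1}=\hw_{i-1}-\beta\cw_{i-1}$ and $\tw_{i-2}=\hw_{i-1}-\cw_{i-1}$, one writes $\tpsi_{i-1}$ as an explicit linear combination of $\hw_{i-1}$ and $\cw_{i-1}$; bounding its squared norm feeds the $\gamma^2$ contributions into $a$ (at order $O(\mu_m^2)$, harmlessly) and into the second term of $c$. This is the main obstacle, since it is where all the $O(\mu_m^2)$ terms must be tracked so that they do not corrupt the leading-order coefficients of $a$ and $d$, and the step-size bound \eqref{23789adhjj-stepsizes} is exactly what keeps the various split factors admissible and ensures $a<1$.

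With the coefficient matrix $M=\left[\begin{smallmatrix} a & b \\ c & d\end{smallmatrix}\right]$ in hand, stability follows from the fact that, for a nonnegative $2\times2$ matrix with $a,d<1$, one has $\rho(M)<1$ if and only if $(1-a)(1-d)>bc$; here $(1-a)(1-d)=\mu_m\nu+O(\mu_m^2)$ while $bc=O(\mu_m^3)$, so the inequality holds for all sufficiently small $\mu_m$. Iterating \eqref{thm: sta-hw-cw-bound} then drives the transient to zero and leaves $\limsup_i \left[\begin{smallmatrix}\bE\|\hw_i\|^2\\ \bE\|\cw_i\|^2\end{smallmatrix}\right]\preceq (I-M)^{-1}\left[\begin{smallmatrix} e\\ f\end{smallmatrix}\right]$. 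Evaluating $(I-M)^{-1}=\tfrac{1}{(1-a)(1-d)-bc}\left[\begin{smallmatrix} 1-d & b\\ c & 1-a\end{smallmatrix}\right]$ with $\det(I-M)=\mu_m\nu+O(\mu_m^2)$ and $e=f=\tfrac{\mu_m^2\sigma_s^2}{(1-\beta)^2}$ gives $\limsup_i\bE\|\hw_i\|^2=O\!\big(\tfrac{\mu_m\sigma_s^2}{(1-\beta)\nu}\big)$, dominated by the $(1-d)e$ term, and $\limsup_i\bE\|\cw_i\|^2=O\!\big(\tfrac{\mu_m^2\sigma_s^2}{(1-\beta)^3}\big)$, dominated by the $(1-a)f$ term, which is \eqref{thm-conv-cw-2}.

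Finally, the forward transform in \eqref{transform} gives $\tw_i=\hw_i-\beta\cw_i$, so that $\bE\|\tw_i\|^2\le 2\bE\|\hw_i\|^2+2\beta^2\bE\|\cw_i\|^2$. Since the $O\!\big(\tfrac{\mu_m\sigma_s^2}{(1-\beta)\nu}\big)$ contribution from $\hw_i$ dominates the $O(\mu_m^2)$ contribution from $\cw_i$, this yields the steady-state bound \eqref{thm-conv-tw} and completes the argument.
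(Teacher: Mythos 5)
Your proposal is correct and follows essentially the same route as the paper's Appendix~C: the same row-wise weighted splitting (with $t=\mu_m\nu/(1-\beta)$ for the first row and the weight $\beta$ for the second, which is exactly your $t'=(1-\beta)/\beta$), the same re-expression of $\|\tpsi_{i-1}\|^2$ through the transformed coordinates to absorb the $\gamma^2$ noise terms into $a$ and $c$, and the same inversion of $I-\Gamma$ to read off the steady-state orders. The only divergence is the stability check, where you invoke the M-matrix criterion $(1-a)(1-d)>bc$ while the paper bounds $\rho(\Gamma)$ by the column sums $\max\{a+c,\ b+d\}$; both are valid, though only the latter computation produces the explicit threshold \eqref{23789adhjj-stepsizes} asserted in the statement rather than an unquantified ``sufficiently small $\mu_m$''.
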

	\begin{proof}
		See Appendix \ref{app:thm-2}.
	\end{proof}
	
	\noindent Although $\bE\|\cw_i\|^2=O(\mu_m^2)$ in result (\ref{thm-conv-cw-2}) is shown to hold asymptotically in the statement of the theorem, it can actually be strengthened and shown to hold for {\em all} time instants. This fact is crucial for our later proof of the equivalence between standard and momentum stochastic gradient methods. 
	
	\begin{corollary}[{\bf Uniform mean-square bound}]
		\label{co:hb-cw-u2}
		Under the same conditions as Theorem~\ref{thm-conv-cw}, it holds for sufficiently small step-sizes that
		{\color{black}
%
\eq{\label{hb-tw-u2}
	\bE\|\cw_i\|^2 = O\left( \frac{(\delta^2 + \gamma^2)\rho_1^{i+1}\mu_m^2}{(1-\beta)^4} + \frac{\sigma_s^2 \mu_m^2}{(1-\beta)^3} \right), \forall i = 0,1,2,\cdots
}
where $\rho_1 \define 1 - \frac{\mu_m \nu}{2(1-\beta)}$, and $\cw_i$ is defined in \eqref{sta-hb iteration 2}.

			}
		
	\end{corollary}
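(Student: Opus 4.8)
The plan is to treat the coupled inequality \eqref{thm: sta-hw-cw-bound} as a two–dimensional linear recursion and iterate it in closed form, isolating the slow mode (carried by $\hw$) from the fast mode (carried by $\cw$). Write $x_i = [\,\bE\|\hw_i\|^2,\ \bE\|\cw_i\|^2\,]\tran$, $g = [\,e,\ f\,]\tran$, and let $\Gamma = \left[\begin{smallmatrix} a & b\\ c& d\end{smallmatrix}\right]$ be the coefficient matrix, whose entries are all nonnegative. Since Theorem \ref{thm-conv-cw} guarantees $\rho(\Gamma)<1$, iterating $x_i \preceq \Gamma x_{i-1} + g$ and exploiting the entrywise nonnegativity of the powers $\Gamma^k$ gives the uniform-in-$i$ bound
\[
x_i \ \preceq\ \Gamma^{i+1} x_{-1} \ +\ \Big(\sum_{k=0}^{i}\Gamma^k\Big) g \ \preceq\ \Gamma^{i+1} x_{-1} + (I-\Gamma)^{-1} g ,
\]
so it remains to estimate the second entries of the transient term $\Gamma^{i+1}x_{-1}$ and the steady-state term $(I-\Gamma)^{-1} g$.

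First I would compute the steady-state entry. Using the explicit $2\times 2$ inverse together with the orders $1-a = \mu_m\nu/(1-\beta)+O(\mu_m^2)$, $1-d = (1-\beta)+O(\mu_m^2)$, $b=O(\mu_m/(1-\beta))$, $c=O(\mu_m^2/(1-\beta)^3)$, one finds $\det(I-\Gamma)=\mu_m\nu + O(\mu_m^2)$, and that the second component of $(I-\Gamma)^{-1} g$ equals $\frac{ce+(1-a)f}{\det(I-\Gamma)}$, whose leading term is $\frac{\sigma_s^2\mu_m^2}{(1-\beta)^3}\big(1+O(\mu_m)\big)$. This reproduces the steady-state contribution and is consistent with the asymptotic statement $\limsup_i \bE\|\cw_i\|^2=O\big(\mu_m^2\sigma_s^2/(1-\beta)^3\big)$ recorded in \eqref{thm-conv-cw-2}.

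Next I would handle the transient entry, which is where the slow factor $\rho_1^{i+1}$ originates. Because $c=O(\mu_m^2)$ and $b=O(\mu_m)$ are small while $a=1-O(\mu_m/(1-\beta))$ and $d=\beta+O(\mu_m^2)$, the eigenvalues of $\Gamma$ split into a slow eigenvalue $\lambda_1 = 1-\tfrac{\mu_m\nu}{1-\beta}+O(\mu_m^2)$ and a fast eigenvalue $\lambda_2 = \beta+O(\mu_m^2)$, with separation $\lambda_1-\lambda_2 = (1-\beta)+O(\mu_m)$ bounded away from zero by Assumption \ref{ass:beta-cond} (here the cross term $4bc=O(\mu_m^3/(1-\beta)^4)$ is negligible against $(a-d)^2=\Theta((1-\beta)^2)$). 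Using the closed form $(\Gamma^{\,i+1})_{21}=c\,\frac{\lambda_1^{i+1}-\lambda_2^{i+1}}{\lambda_1-\lambda_2}$ and the fact that $\lambda_1^{i+1}$ dominates $\lambda_2^{i+1}=O(\beta^{i+1})$, I would bound $(\Gamma^{i+1})_{21}=O\!\big(\frac{\mu_m^2(\delta^2+\gamma^2)}{(1-\beta)^4}\big)\lambda_1^{i+1}$, while $(\Gamma^{i+1})_{22}$ decays at the fast rate $O(\beta^{i+1})$ up to a negligible slow correction. The initial vector is controlled from \eqref{transform} and the initializations \eqref{sgfat-3}--\eqref{sgfat-4}: one has $\bE\|\hw_{-1}\|^2=O(1)$, and since $\cw_{-1}=\frac{\mu_m}{1-\beta}\grad_w Q(\w_{-2};\btheta_{-1})$, smoothness together with \eqref{noise-2-order} gives $\bE\|\cw_{-1}\|^2=O\!\big(\mu_m^2(\delta^2+\gamma^2)/(1-\beta)^2\big)$. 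Multiplying out, the second entry of $\Gamma^{i+1}x_{-1}$ is $O\!\big(\frac{(\delta^2+\gamma^2)\mu_m^2}{(1-\beta)^4}\big)\lambda_1^{i+1}$.

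Finally, adding the two entries yields the claimed bound once $\lambda_1^{i+1}$ is replaced by $\rho_1^{i+1}$. This replacement is clean because $\lambda_1 = 1-\frac{\mu_m\nu}{1-\beta}+O(\mu_m^2)\le 1-\frac{\mu_m\nu}{2(1-\beta)}=\rho_1$ for sufficiently small $\mu_m$, so $\lambda_1^{i+1}\le\rho_1^{i+1}$ (and also $\beta^{i+1}\le\rho_1^{i+1}$); the factor $\tfrac12$ in the definition of $\rho_1$ is precisely the slack needed to absorb the $O(\mu_m^2)$ corrections to $\lambda_1$. I expect the transient analysis to be the main obstacle: one must estimate the eigenvalues of $\Gamma$ and the entries of $\Gamma^{i+1}$ accurately enough to verify the separation $\lambda_1-\lambda_2=\Theta(1-\beta)$ and the inequality $\lambda_1\le\rho_1$, because it is only through the small coupling coefficient $c$ feeding the slow $\hw$-mode into $\cw$ that the slow transient $\rho_1^{i+1}$ arises at all --- the self-dynamics of $\cw$ alone contract at the fast rate $\beta^{i+1}$.
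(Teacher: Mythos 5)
Your proposal is correct, and it reaches the stated bound by a genuinely different route from the paper. The paper's proof is a two-stage argument: it first contracts the joint vector $z_i=[\bE\|\hw_i\|^2,\ \bE\|\cw_i\|^2]\tran$ in the $1$-norm, showing $\|\Gamma\|_1\le\rho_1$ for small $\mu_m$, which yields the uniform bound $\bE\|\hw_i\|^2\le\rho_1^{i+1}\|z_{-1}\|_1+O(\mu_m\sigma_s^2/((1-\beta)\nu))$; it then feeds this into the second row alone, iterating the scalar inequality $\bE\|\cw_i\|^2\le\alpha\,\bE\|\cw_{i-1}\|^2+O(\mu_m^2)\rho_1^{i}+O(\mu_m^2)$ at the fast rate $\alpha=(1+\beta)/2$ and summing $\sum_s(\alpha/\rho_1)^s=O(1/(1-\beta))$. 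You instead iterate the matrix recursion once, $x_i\preceq\Gamma^{i+1}x_{-1}+(I-\Gamma)^{-1}g$, and extract the two contributions by an explicit spectral analysis of $\Gamma$: the cofactor formula for the second entry of $(I-\Gamma)^{-1}g$ gives the steady-state term $\sigma_s^2\mu_m^2/(1-\beta)^3$, while the Lagrange-interpolation identity $(\Gamma^{i+1})_{21}=c\,(\lambda_1^{i+1}-\lambda_2^{i+1})/(\lambda_1-\lambda_2)$, together with the eigenvalue separation $\lambda_1-\lambda_2=\Theta(1-\beta)$ guaranteed by Assumption \ref{ass:beta-cond}, gives the transient $O(\mu_m^2(\delta^2+\gamma^2)/(1-\beta)^4)\,\lambda_1^{i+1}$, and $\lambda_1\le\rho_1$ absorbs the $O(\mu_m^2)$ eigenvalue correction exactly as the factor $\tfrac12$ in $\rho_1$ is used in the paper. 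The two arguments exploit the same structural facts --- the coupling $c=O(\mu_m^2)$ is the only channel through which the slow $\hw$-transient enters $\cw$, and the $\cw$ self-dynamics contract at rate near $\beta$ --- and your ratio $(\lambda_1^{i+1}-\lambda_2^{i+1})/(\lambda_1-\lambda_2)$ is the closed form of the paper's geometric sum $\rho_1^i\sum_s(\alpha/\rho_1)^s$. What your version buys is a one-shot derivation with the steady-state constant read off directly from $(I-\Gamma)^{-1}$; what it costs is the eigenvalue perturbation bookkeeping (verifying $4bc\ll(a-d)^2$ and $d-\lambda_2=O(\mu_m^3)$), which the paper's norm-based argument avoids entirely. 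Both are valid; the constants you obtain match \eqref{hb-tw-u2}.
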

	\begin{proof}
		See Appendix \ref{app-cor-2-ub}.
	\end{proof}
	{\color{black}Corollary \ref{co:hb-cw-u2} has two implications. First, since $\beta$, $\delta$, $\gamma$, $\sigma_s^2$ are all constants, and $\rho_1<1, \alpha<1$, we conclude that
		\eq{\label{28adsah}
		\bE\|\cw_i\|^2 = O(\mu_m^2),\quad \forall i = 0,1,2,\cdots
		}
		Besides, since $\rho_1^i\to 0$ as $i\to \infty$, according to \eqref{hb-tw-u2} we also achieve
		\eq{\label{asdklj7234}
			\limsup_{i\to \infty} \bE\|\cw_i\|^2  =O \left( \frac{\sigma_s^2 \mu_m^2}{(1-\beta)^3} \right),
		}
		which is consistent with \eqref{thm-conv-cw-2}.
	}
	
	\subsection{Stability of Fourth-Order Error Moment}
	In a manner similar to the treatment in Section~\ref{sec: stochastic gradient method}, we can also
	establish the convergence of the fourth-order moments of the error vectors, $\bE\|\hw_i\|^4$ and $\bE\|\tw_i\|^4$.
	
	\begin{theorem}[{\bf Fourth-order stability}]\label{lm:gfat-conv-4-4}
		Let Assumptions \ref{ass: cost function}, \ref{ass: noise-4} and \ref{ass:beta-cond} hold and recall conditions (\ref{ass: gfat}). Then, for sufficiently small step-sizes $\mu_m$, it holds that
		{
		\eq{
			\limsup_{i \rightarrow \infty} \bE\|\hw_i\|^4 &= O (\mu_m^2),\label{thm-conv4-hw}\\
			\limsup_{i \rightarrow \infty} \bE\|\cw_i\|^4 &= O\left( \mu_m^4\right),\label{thm-conv4-cw}\\
			\limsup_{i \rightarrow \infty} \bE\|\tw_i\|^4 &= O \left(\mu_m^2\right).\label{thm-conv4-tw}
		}
	}
	\end{theorem}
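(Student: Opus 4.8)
The plan is to exploit the extended recursion \eqref{sta-hb iteration 2} exactly as the second-order analysis did, but now at the level of fourth moments, building a coupled pair of scalar recursions for $\bE\|\hw_i\|^4$ and $\bE\|\cw_i\|^4$. Once these are in hand, the bound \eqref{thm-conv4-tw} on $\bE\|\tw_i\|^4$ follows immediately from the transformation \eqref{transform}: the top block of $V$ gives $\tw_i = \hw_i - \beta\cw_i$, whence $\bE\|\tw_i\|^4 \le 8\,\bE\|\hw_i\|^4 + 8\beta^4\,\bE\|\cw_i\|^4 = O(\mu_m^2) + O(\mu_m^4) = O(\mu_m^2)$. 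Since the theorem only asks for $\limsup$ values, I need not track the transient terms (as Lemma \ref{lm: x4 bound} does); it suffices to show that each coupled recursion has a contraction factor strictly below one and to read off the steady-state value from the inhomogeneous driving term.

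For the $\hw$-row I would write $\hw_i = \D_{i-1} + \n_i$, where $\D_{i-1} \defeq (I_M - \tfrac{\mu_m}{1-\beta}\H_{i-1})\hw_{i-1} + \tfrac{\mu_m\beta'}{1-\beta}\H_{i-1}\cw_{i-1}$ is $\filt_{i-1}$-measurable and $\n_i \defeq \tfrac{\mu_m}{1-\beta}\s_i(\bpsi_{i-1})$ is conditionally zero-mean. Expanding $\|\D_{i-1}+\n_i\|^4$ and taking $\bE[\,\cdot\,|\filt_{i-1}]$, the term linear in $\n_i$ drops out by \eqref{noise-1-order-4}, while Cauchy-Schwarz controls the remaining mixed terms by $\|\D_{i-1}\|^2\bE[\|\n_i\|^2|\filt_{i-1}]$, $\|\D_{i-1}\|\,\bE[\|\n_i\|^3|\filt_{i-1}]$ and $\bE[\|\n_i\|^4|\filt_{i-1}]$, which Assumption \ref{ass: noise-4} makes $O(\mu_m^2)$, $O(\mu_m^3)$, $O(\mu_m^4)$ respectively. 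To handle $\|\D_{i-1}\|^4$ I would use the convexity bound $\|x+y\|^4 \le t^{-3}\|x\|^4 + (1-t)^{-3}\|y\|^4$ with the split parameter $t$ chosen equal to the contraction factor $1 - \tfrac{\mu_m\nu}{1-\beta}$ of $I_M - \tfrac{\mu_m}{1-\beta}\H_{i-1}$; this is precisely the device that produced the $O(\mu_m)$ entry $b$ in Theorem \ref{thm-conv-cw}, and it yields a self-coefficient $1 - O(\mu_m)$ on $\bE\|\hw_{i-1}\|^4$ together with an $O(\mu_m)$ coefficient on $\bE\|\cw_{i-1}\|^4$. The subdominant mixed terms are then collapsed using the already-proven second-order rates $\bE\|\hw_{i-1}\|^2 = O(\mu_m)$ and $\bE\|\cw_{i-1}\|^2 = O(\mu_m^2)$ from Theorem \ref{thm-conv-cw} and Corollary \ref{co:hb-cw-u2}; the largest surviving inhomogeneous contribution is $O(\mu_m^2)\,\bE\|\hw_{i-1}\|^2 = O(\mu_m^3)$. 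Dividing this $O(\mu_m^3)$ driving term by the spectral gap $1-(1-O(\mu_m)) = O(\mu_m)$ gives the claimed $\limsup \bE\|\hw_i\|^4 = O(\mu_m^2)$ of \eqref{thm-conv4-hw}.

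For the $\cw$-row the same expansion applies, but now the contraction operator is $\beta I_M + \tfrac{\mu_m\beta'}{1-\beta}\H_{i-1}$, of norm $\beta + O(\mu_m)$. Here Assumption \ref{ass:beta-cond} is essential: because $\beta \le 1-\epsilon$, the quantity $1-\beta-O(\mu_m)$ stays bounded away from zero for small $\mu_m$, so the split-parameter device incurs no $\mu_m^{-1}$ amplification and the coupling coefficient on $\bE\|\hw_{i-1}\|^4$ comes out $O(\mu_m^4)$. The dominant driving term is then the pure noise level $O(\mu_m^4)$, and dividing by the $O(1)$ gap $1-\beta^4-O(\mu_m)$ yields $\limsup\bE\|\cw_i\|^4 = O(\mu_m^4)$, i.e.\ \eqref{thm-conv4-cw}. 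Collecting the two rows into a nonnegative coefficient matrix of the form $\left[\begin{smallmatrix} 1-O(\mu_m) & O(\mu_m)\\ O(\mu_m^4) & \beta^4+O(\mu_m)\end{smallmatrix}\right]$, one checks as in Theorem \ref{thm-conv-cw} that its spectral radius is below one for small $\mu_m$, so iterating and passing to the limit is legitimate.

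I expect the main obstacle to be the bookkeeping of the mixed fourth-order terms: unlike the clean second-order recursion, expanding $\|\hw_i\|^4$ and $\|\cw_i\|^4$ generates a full family of products $\bE[\|\hw_{i-1}\|^a\|\cw_{i-1}\|^b]$ with $a+b\in\{2,3,4\}$ together with odd powers of the noise, and one must verify that each is either absorbed into the two fourth-order unknowns (at an $O(\mu_m)$ penalty) or reduced — through Young's and Cauchy-Schwarz inequalities and the sharp second-order rates — to strictly smaller order than the $O(\mu_m^3)$ and $O(\mu_m^4)$ driving terms that set the final rates. The delicate point is that the slow $1-O(\mu_m)$ contraction of the $\hw$-row amplifies its driving term by $\mu_m^{-1}$, so a mis-estimate of any cross term by one power of $\mu_m$ would corrupt the conclusion; keeping the split parameter exactly at the contraction factor, and feeding in the already-established second-order bounds, is what guarantees the correct $O(\mu_m^2)$ versus $O(\mu_m^4)$ separation.
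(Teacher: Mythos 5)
Your proposal follows essentially the same route as the paper's proof in Appendix E: expand the fourth power of each row of the extended recursion \eqref{sta-hb iteration 2}, kill the linear noise term by conditional unbiasedness, apply the Jensen split with the parameter matched to the contraction factor to obtain the $1-O(\mu_m)$ and $O(\mu_m)$ coefficients in the $\hw$-row and the $\beta+O(\mu_m^2)$ self-coefficient in the $\cw$-row, feed in the second-order steady-state rates $\bE\|\hw_{i-1}\|^2=O(\mu_m)$ and $\bE\|\cw_{i-1}\|^2=O(\mu_m^2)$ to reduce the mixed terms, and invert the resulting stable $2\times 2$ recursion. The only slip is that the relative gradient-noise bound $\gamma^2\|\tpsi_{i-1}\|^2+\sigma_s^2$ (and its fourth-order analogue) re-injects $\|\hw_{i-1}\|^4$ into the $\cw$-row with an $O(\mu_m^2)$ --- not $O(\mu_m^4)$ --- coupling coefficient, but since $O(\mu_m^2)\cdot\limsup_i\bE\|\hw_i\|^4=O(\mu_m^4)$ this does not affect the stated conclusions.
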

	{\color{black}
	\begin{proof} See Appendix \ref{app:4-th order moment}.
	\end{proof}
}
	
	\noindent Again, result (\ref{thm-conv4-cw}) is only shown to hold asymptotically in the statement of the theorem. In fact, $\bE\|\cw_i\|^4$ can also be shown to be bounded for all time instants, as the following corollary states. 
	
	\begin{corollary}[{\bf Uniform forth-moment bound}]
		\label{co:hb-cw-u2-4}
		Under the same conditions as Theorem~\ref{lm:gfat-conv-4-4}, it holds for sufficiently small step-sizes that
		{\color{black}
		\eq{\label{238asdbmnc}
			\bE\|\cw_i\|^4 = O \left( \frac{\gamma^2\rho_2^{i+1}}{(1-\beta)^3}\mu_m^2 + \left[ \frac{\sigma_s^2(\delta^2 + \gamma^2)(i+1)\rho_2^{i+1}}{(1-\beta)^7} + \frac{(\gamma^2 + \nu^2)\sigma_s^4 + \nu^2\sigma_{s,4}^4}{(1-\beta)^6\nu^2}\right] \mu_m^4\right)
		}
		where $\rho_2 \define 1 - \frac{\mu_m \nu}{4(1-\beta)} \in(0,1)$.
	}
	\end{corollary}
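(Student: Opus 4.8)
The plan is to mirror the argument behind the uniform second-moment bound of Corollary~\ref{co:hb-cw-u2}, but executed at the fourth-order level. I would begin from the second block-row of the extended recursion \eqref{sta-hb iteration 2},
\[
\cw_i = \Big(\beta I_M + \tfrac{\mu_m\beta'}{1-\beta}\H_{i-1}\Big)\cw_{i-1} - \tfrac{\mu_m}{1-\beta}\H_{i-1}\hw_{i-1} + \tfrac{\mu_m}{1-\beta}\s_i(\bpsi_{i-1}),
\]
and write it as $\cw_i=T_i+N_i$, with $T_i$ being $\filt_{i-1}$-measurable and $N_i=\frac{\mu_m}{1-\beta}\s_i(\bpsi_{i-1})$ the zero-mean noise. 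Taking the conditional fourth moment $\bE[\|\cw_i\|^4\mid\filt_{i-1}]$ and expanding $\|T_i+N_i\|^4$, the unbiasedness \eqref{noise-1-order-4} kills every term that is odd in $N_i$, leaving $\|T_i\|^4$, a piece bounded by $c\,\|T_i\|^2\,\bE[\|N_i\|^2\mid\filt_{i-1}]$, and $\bE[\|N_i\|^4\mid\filt_{i-1}]$. I would then invoke \eqref{noise-4-order} to control the last two by $\|\tpsi_{i-1}\|^2$ and $\|\tpsi_{i-1}\|^4$, and convert these into $\|\hw_{i-1}\|$ and $\|\cw_{i-1}\|$ through the transformation \eqref{transform}.

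The deterministic part $\|T_i\|^4$ is handled by Jensen's inequality with a weight that separates the $\cw_{i-1}$ contribution from the $\hw_{i-1}$ one. Crucially, the self-coefficient satisfies $\|\beta I_M+\tfrac{\mu_m\beta'}{1-\beta}\H_{i-1}\|\le\beta+O(\mu_m)$, which is bounded strictly below $1$ thanks to Assumption~\ref{ass:beta-cond}; hence the self-term produces a genuine contraction factor $\beta^4+O(\mu_m)$, while the cross-coupling injects an $O(\mu_m^4)\,\bE\|\hw_{i-1}\|^4$ forcing. Combining with the noise estimates yields a scalar recursion of the schematic form
\[
\bE\|\cw_i\|^4 \le (\beta^4+O(\mu_m))\,\bE\|\cw_{i-1}\|^4 + c_1\mu_m^2\gamma^2\,\bE\|\hw_{i-1}\|^4 + c_2\mu_m^2\sigma_s^2\,\bE\|\cw_{i-1}\|^2 + (\text{$O(\mu_m^4)$ noise terms}),
\]
into whose inhomogeneous part I would substitute the already-secured uniform estimates: the second moments from Theorem~\ref{thm-conv-cw} and Corollary~\ref{co:hb-cw-u2}, together with the uniform fourth-moment bound on $\bE\|\hw_i\|^4$ --- the momentum analogue of Lemma~\ref{lm: x4 bound}, of the form $\rho_2^{i+1}\bE\|\hw_{-1}\|^4+O(\mu_m^2)(i+1)\rho_2^{i+1}+O(\mu_m^2)$ --- which must be extracted from the proof of Theorem~\ref{lm:gfat-conv-4-4}, since that theorem states it only asymptotically.

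The final step is to unroll this scalar recursion and evaluate the geometric sums. Because the self-rate $\beta^4$ is separated from the forcing rate $\rho_2$ by an $O(1)$ gap, the convolution $\sum_j\beta^{4(i-j)}\rho_2^j=O(\rho_2^{i+1})$ stays free of an $(i+1)$ factor, which is precisely how the leading transient $\frac{\gamma^2\rho_2^{i+1}}{(1-\beta)^3}\mu_m^2$ emerges from the $c_1\mu_m^2\gamma^2\,\bE\|\hw\|^4$ forcing fed by the $O(1)\rho_2^j$ piece of $\bE\|\hw_j\|^4$; the genuine $(i+1)\rho_2^{i+1}$ resonance in the $O(\mu_m^4)$ bracket is instead inherited from the $(i+1)\rho_2^{i+1}$ term already carried by the $\hw$ fourth-moment bound and by the second-moment transients of $\cw$ (or, equivalently, from majorizing the several distinct transient rates $\beta^4,\rho_1,\rho_2$ by the single common rate $\rho_2$, which collapses $\sum_j\rho_2^{i-j}\rho_2^j$ into $(i+1)\rho_2^i$), exactly as the $(i+1)\rho^{i+1}$ term arises in Lemma~\ref{lm: x4 bound}; and since $1-\beta^4=O(1)$ rather than $O(\mu_m)$, dividing the persistent noise forcing by it keeps the steady level at the stated $O(\mu_m^4)$ instead of degrading it to $O(\mu_m^3)$. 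I expect the main obstacle to be precisely this order-bookkeeping in the fourth-moment expansion: one must track powers of both $\mu_m$ and $(1-\beta)$ through every Jensen/Young split so that the leading transient lands at order $\mu_m^2$ while the steady part is held at order $\mu_m^4$, and one must correctly attribute each cross term --- especially separating the contributions decaying at the fast rate $\beta^4$ from those decaying at the near-unity rate $\rho_2$ --- before the convolutions can be summed into the claimed closed form.
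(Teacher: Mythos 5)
Your proposal is correct and follows essentially the same route as the paper: expand the fourth moment of the second block-row using the conditional unbiasedness and fourth-order noise bound, exploit the $O(1)$ contraction $\beta+O(\mu_m^2)<1$ of the self-coefficient, feed in the uniform (all-time) fourth-moment bound on $\bE\|\hw_i\|^4$ extracted from the proof of Theorem~\ref{lm:gfat-conv-4-4} together with the uniform second-moment bounds, and sum the resulting geometric convolutions using the $O(1)$ gap between the self-rate and the forcing rate $\rho_2$. The paper obtains the uniform $\bE\|\hw_i\|^4$ bound by taking $1$-norms of the coupled vector recursion before refining the $\cw$-row, but this is the same mechanism you describe, and your order-bookkeeping of where the $\mu_m^2\rho_2^{i+1}$, $(i+1)\rho_2^{i+1}\mu_m^4$, and steady $O(\mu_m^4)$ terms originate matches the paper's derivation.
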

	\begin{proof} See Appendix \ref{app:corr-2aa}.
	\end{proof}
	{\color{black}Corollary \ref{co:hb-cw-u2-4} also has two implications. First, since $\beta$, $\delta$, $\gamma$, $\sigma_s$ and $\sigma_{s,4}$ are constants, we conclude that
		\eq{\label{28adsah-237a}
			\bE\|\cw_i\|^4 = O(\mu_m^2),\quad \forall i = 0,1,2,\cdots
		}
		Besides, since $\rho_2^i \to 0$ and $i\rho_2^i \to 0$ as $i\to \infty$, we will achieve the following fact according to \eqref{238asdbmnc}
		\eq{\label{asdklj7234-97sd}
			\limsup_{i\to \infty} \bE\|\cw_i\|^4  =O \left( \frac{(\gamma^2 + \nu^2)\sigma_s^4 + \nu^2\sigma_{s,4}^4}{(1-\beta)^6\nu^2} \mu_m^4 \right) = O(\mu_m^4),
		}
		which is consistent with \eqref{thm-conv4-cw}.
	}
	
	\section{Equivalence in the Quadratic Case}
	\label{Sec: LMS}
	In Section \ref{Sec: GFAT} we showed the momentum stochastic gradient algorithm
	(\ref{sgfat-1})--(\ref{sgfat-2}) converges exponentially for sufficiently small step-sizes. But some
	important questions remain. Does the momentum implementation converge faster than the
	standard stochastic gradient method (\ref{stochastic gradient method-2})? Does the momentum implementation
	lead to superior steady-state mean-square-deviation (MSD) performance, measured in terms of the limiting value of $\Ex\|\widetilde{\w}_i\|^2$? Is the momentum method generally superior to the standard method when considering both the convergence rate and MSD performance? In this and the next sections, we answer these questions in some detail. Before treating the case of general risk functions, $J(w)$, we examine first the special case when $J(w)$ is quadratic in $w$ to illustrate the main conclusions that will follow. 
	
%
%

	\subsection{Quadratic Risks}
	We consider mean-square-error risks of the form
	\eq{
		\label{lms model}
		J(w) = \frac{1}{2}\bE \left(\d(i)-\u_i \tran w \right)^2,
	}
	where $\d(i)$ denotes a streaming sequence of zero-mean random variables with variance $\sigma_d^2=\bE \d^2(i)$, and  $\u_i \in \real^M$ denotes
	a streaming sequence of independent zero-mean random vectors with covariance matrix $R_u=\bE\u_i \u_i\tran > 0$. The cross covariance vector
	between $\d(i)$ and $\u_i$ is denoted by $r_{du}=\bE \d(i)\u_i$. The data $\{\d(i), \u_i\}$ are assumed to be wide-sense stationary and related via a linear regression model of the form:
	\eq{
		\label{data model}
		\d(i)=\u_i\tran w^o + \v(i),
	}
	for some unknown $w^o$, and where $\v(i)$ is a zero-mean white noise process with power $\sigma_v^2=\bE \v^2(i)$ and assumed independent of $\u_j$ for all $i,j$. If we multiply (\ref{data model}) by $\u_i$ from the left and take expectations, we find that the model parameter $w^o$ satisfies the normal equations $R_u w^o=r_{du}$. The unique solution that minimizes (\ref{lms model}) also satisfies these same equations. Therefore, minimizing the quadratic risk (\ref{lms model}) enables us to recover the desired $w^o$. This observation explains why mean-square-error costs are popular in the context of regression models.

	\subsection{Adaptation Methods}
	For the least-mean-squares problem \eqref{lms model}, the true gradient vector at any location $\w$ is
	\eq{
		\label{lms true gradient}
		{\grad_w J}(\w) = R_u \w - r_{du}\;=\;-R_u (w^o-\w),
	}
	while the approximate gradient vector constructed from an instantaneous sample realization is:
	\eq{
		\label{lms approximate gradient}
		\grad_w Q(\w;\d(i),\u_i) = -\u_i (\d(i)-\u_i\tran \w).
	}
	Here the loss function is defined by
	\be Q(w;\d(i),\u_i)\define {1\over 2}\bE \left(\d(i)-\u_i \tran w \right)^2\ee
	The resulting LMS (stochastic-gradient) recursion is given by
	\be\label{eq-LMS}
	\w_i=\w_{i-1}+\mu \u_i (\d(i)-\u_i\tran \w_{i-1})
	\ee
	and the corresponding gradient noise process is
	\eq{
		\label{lms noise}
		\s_i(\w) &= (R_u-\u_i \u_i\tran) (w^o-\w) - \u_i \v(i).
	}
	It can be verified that this noise process satisfies Assumption \ref{ass: noise} --- see Example 3.3 in \citep{sayed2014adaptation}. Subtracting $w^o$ from both sides of (\ref{eq-LMS}), and recalling that $\tw_i=w^o-\w_i$, we obtain the error recursion that corresponds to the LMS implementation:
	\eq{
		\label{stochastic gradient method tw update-2}
		\tw_i =(I_M-\mu R_u)\tw_{i-1} + \mu \s_i(\w_{i-1}),
	}
	where $\mu$ is some constant step-size.
	In order to distinguish the variables for LMS from the variables for the momentum LMS version described below, we replace the notation $\{\w_i,\tw_i\}$ for LMS by $\{\x_i,\tx_i\}$ and keep the notation $\{\w_i,\tw_i\}$  for momentum LMS, i.e., for the LMS implementation \eqref{stochastic gradient method tw update-2} we shall write instead
	\eq{
		\label{stochastic gradient method tx update}
		\tx_i =(I_M-\mu R_u)\tx_{i-1} + \mu \s_i(\x_{i-1}).
	}

	On the other hand, we conclude from (\ref{sgfat-1})--(\ref{sgfat-2}) that the momentum LMS recursion will be given by:
	\bqq
	\bpsi_{i-1} &=&\w_{i-1} + \beta_1 (\w_{i-1} - \w_{i-2}), \label{sgfat-1-lms}\\
	\w_i  &=& \bpsi_{i-1} +
	\mu_m \u_i (\d(i)-\u_i\tran \bpsi_{i-1}) +\beta_2(\bpsi_{i-1}-\bpsi_{i-2}), \label{sgfat-2-lms}
	\eqq
	Using the transformed recursion (\ref{sta-hb iteration 2}), we can transform the resulting relation for
	$\widetilde{\w}_i$  into:
	\begin{align}
	\label{hb iteration 2}
	\left[                 
	\begin{array}{c}   
	\widehat{\w}_{i}\\  
	\cw_{i}\\  
	\end{array}
	\right]                
	=&
	\left[
	\begin{array}{cc}
	I_M-\frac{\mu_m}{1-\beta}R_u &\frac{\mu_m\beta^\prime}{1-\beta}R_u\\
	-\frac{\mu_m}{1-\beta} R_u & \beta I_M+\frac{\mu_m \beta^\prime}{1-\beta}R_u\\
	\end{array}
	\right]
	\left[
	\begin{array}{c}
	\hw_{i-1}\\
	\cw_{i-1}\\
	\end{array}
	\right]  
	+
	\frac{\mu_m}{1-\beta}\left[
	\begin{array}{c}
	\s_i(\bpsi_{i-1})\\
	\s_i(\bpsi_{i-1})\\
	\end{array}
	\right],
	\end{align}
	where the Hessian matrix, $\H_{i-1}$, is independent of the weight iterates and given by $R_u$ for quadratic risks. It follows from the first row that 
	\beqn
	\label{rl tw}
	\hw_i&=&\left(I_M-\frac{\mu_m}{1-\beta}R_u\right)\hw_{i-1}+\frac{\mu_m \beta^\prime}{1-\beta}R_u\cw_{i-1} +\frac{\mu_m}{1-\beta}\s_i(\bpsi_{i-1}). \label{wi update}
	\eeqn
	Next, we assume the step-sizes $\{\mu,\mu_m\}$ and the momentum parameter are selected to satisfy
	\eq{\mu=\frac{\mu_m}{1-\beta}.\label{cond.stepsize}}
	Since $\beta\in [0,1)$, this means that $\mu_m < \mu$. Then, recursion
	\eqref{rl tw} becomes
	\be
	\label{rl tw-2}
	\hw_i=(I_M-\mu R_u)\hw_{i-1}+\mu \beta^\prime R_u\cw_{i-1} +\mu \s_i(\bpsi_{i-1}).
	\ee
	Comparing \eqref{rl tw-2} with the LMS recursion (\ref{stochastic gradient method tx update}), we find that both relations are quite similar, except that the momentum recursion has an extra driving term dependent on $\cw_{i-1}$. However, recall from \eqref{transform} that $\cw_{i-1}=(\tw_{i-2}-\tw_{i-1})/(1-\beta)$, which is the difference
	between two consecutive points generated by momentum LMS. Intuitively, it is not hard to see that $\cw_{i-1}$ is in the order of $O(\mu)$, which makes $\mu \beta^\prime R_u \cw_{i-1}$
	in the order of $O(\mu^2)$. When the step-size $\mu$ is very small, this $O(\mu^2)$ term can be ignored. Consequently, the above recursions for $\hw_i$ and $\tx_i$ should evolve close to each other, which would {\color{black}help to prove that} $\w_i$ and $\x_i$ will also evolve close to each other as well. This conclusion can be established formally as follows, which proves the equivalence between
	the momentum and standard LMS methods.
	
	\begin{theorem}[{\bf Equivalence for LMS}]
		\label{thm:gfat-stochastic gradient method-dist-hw} Consider the
		LMS and momentum LMS recursions (\ref{eq-LMS}) and (\ref{sgfat-1-lms})--(\ref{sgfat-2-lms}). {\color{black}Let Assumptions \ref{ass: cost function}, \ref{ass: noise} and \ref{ass:beta-cond} hold.} Assume both algorithms start from the same initial states, namely, $\bpsi_{-2}=\w_{-2}=\x_{-1}$. Suppose conditions (\ref{ass: gfat}) holds, and that the step-sizes $\{\mu,\mu_m\}$ satisfy (\ref{cond.stepsize}). Then, it holds for sufficiently small $\mu$ that for $\forall\ i = 0,1,2,3,\cdots$
		{\color{black}
		\eq{
			\label{hb-stochastic gradient method-dist-tw-2}
			\bE\|\w_i-\x_i\|^2=O\left( \left[\frac{\delta^2 + \gamma^2}{(1-\beta)^2} \rho_1^{i+1} +  \frac{\delta^2 \sigma_s^2  }{\nu^2(1-\beta)} \right]\mu^2 +  \frac{\delta^2(\delta^2 + \gamma^2) (i+1)\rho_1^{i+1}}{\nu(1-\beta)^2}\mu^3  \right).
		}
	where $\rho_1 = 1 - \frac{\mu\nu}{2} \in (0,1)$.
	}
	\end{theorem}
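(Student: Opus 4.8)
The plan is to bound the gap by passing to the transformed coordinates $(\hw_i,\cw_i)$. Inverting \eqref{transform} gives the identity $\tw_i=\hw_i-\beta\cw_i$, and since $\w_i-\x_i=\tx_i-\tw_i$, I obtain $\w_i-\x_i=(\tx_i-\hw_i)+\beta\cw_i$. Setting $\z_i\define\tx_i-\hw_i$, it then follows that $\bE\|\w_i-\x_i\|^2\le 2\bE\|\z_i\|^2+2\beta^2\bE\|\cw_i\|^2$. The second term is already controlled \emph{uniformly} in $i$ by Corollary \ref{co:hb-cw-u2}: substituting $\mu_m=(1-\beta)\mu$ from \eqref{cond.stepsize} turns \eqref{hb-tw-u2} into $\bE\|\cw_i\|^2=O\big((\delta^2+\gamma^2)(1-\beta)^{-2}\rho_1^{i+1}\mu^2+\sigma_s^2(1-\beta)^{-1}\mu^2\big)$, whose two pieces are dominated by the transient and the steady-state terms of the claimed bound (using $\delta\ge\nu$). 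Hence the whole problem reduces to bounding $\bE\|\z_i\|^2$.

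First I would subtract the LMS recursion \eqref{stochastic gradient method tx update} from the first row \eqref{rl tw-2} of the extended momentum recursion, obtaining the driven recursion
\[
\z_i = (I_M-\mu R_u)\z_{i-1} - \mu\beta^\prime R_u\cw_{i-1} + \mu\big(\s_i(\x_{i-1})-\s_i(\bpsi_{i-1})\big).
\]
The key simplification, special to the quadratic case, comes from \eqref{lms noise}: the additive component $\u_i\v(i)$ cancels in the difference, leaving the purely multiplicative term $\s_i(\x_{i-1})-\s_i(\bpsi_{i-1})=(R_u-\u_i\u_i\tran)(\tx_{i-1}-\tpsi_{i-1})$, which has zero conditional mean given $\filt_{i-1}$ and conditional second moment at most $\gamma^2\|\tx_{i-1}-\tpsi_{i-1}\|^2$. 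Using the definition of $\bpsi_{i-1}$, the transform \eqref{transform}, and $\tw_i=\hw_i-\beta\cw_i$, I would rewrite $\tx_{i-1}-\tpsi_{i-1}=\z_{i-1}-\kappa\cw_{i-1}$ with the bounded constant $\kappa\define\beta_1(1-\beta)-\beta$.

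Because the noise difference is a martingale difference with respect to $\filt_{i-1}$, conditioning $\|\z_i\|^2$ on $\filt_{i-1}$ removes its cross terms; a Young split of the remaining $\filt_{i-1}$-measurable part (with $\|I_M-\mu R_u\|\le 1-\mu\nu$, $\|R_u\|\le\delta$, and free parameter of order $\mu\nu$) then gives, after taking expectations,
\[
\bE\|\z_i\|^2 \;\le\; \rho_1\,\bE\|\z_{i-1}\|^2 \;+\; O\!\left(\frac{\mu\delta^2}{\nu}\right)\bE\|\cw_{i-1}\|^2,\qquad \rho_1=1-\tfrac{\mu\nu}{2},
\]
valid for sufficiently small $\mu$. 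Unrolling this scalar recursion and inserting the uniform $\cw$-bound: the constant part, summed geometrically (factor $1/(1-\rho_1)=2/(\mu\nu)$), yields $\frac{\delta^2\sigma_s^2}{\nu^2(1-\beta)}\mu^2$; the resonant part $\sum_k\rho_1^{i-k}\rho_1^k=(i+1)\rho_1^i$ yields $\frac{\delta^2(\delta^2+\gamma^2)(i+1)\rho_1^{i+1}}{\nu(1-\beta)^2}\mu^3$; and the homogeneous part $\rho_1^{i+1}\bE\|\z_{-1}\|^2$ yields the transient $\frac{\delta^2+\gamma^2}{(1-\beta)^2}\rho_1^{i+1}\mu^2$, where the shared initialization $\w_{-2}=\bpsi_{-2}=\x_{-1}$ forces $\z_{-1}=-\mu\grad_w Q(\w_{-2};\btheta_{-1})$ so that $\bE\|\z_{-1}\|^2=O((\delta^2+\gamma^2)\mu^2)$ by Assumption \ref{ass: noise}. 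Combining with the $\beta^2\bE\|\cw_i\|^2$ contribution produces exactly \eqref{hb-stochastic gradient method-dist-tw-2}.

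The hard part will be that the $\z_i$-recursion is driven by $\bE\|\cw_{i-1}\|^2$, so a bound valid for \emph{every} $i$ demands the uniform estimate of Corollary \ref{co:hb-cw-u2} rather than the asymptotic bound of Theorem \ref{thm-conv-cw}; this is precisely what makes the ``for all $i$'' claim go through. Care is also needed in the contraction step, where the multiplicative-noise contribution of order $\mu^2\gamma^2\|\z_{i-1}\|^2$ must be absorbed into $\rho_1$, which fixes the admissible range of $\mu$, and in the bookkeeping of the $(1-\beta)$ powers after the substitution $\mu_m=(1-\beta)\mu$, including verifying that the initialization and $\cw$ steady-state terms are dominated (e.g.\ $\sigma_s^2\mu^2\le\frac{\delta^2\sigma_s^2}{\nu^2(1-\beta)}\mu^2$ since $\delta\ge\nu$ and $1-\beta\le1$).
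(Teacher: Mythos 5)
Your proposal is correct and follows essentially the same route as the paper's proof in Appendix G: subtract the LMS error recursion from the first row of the transformed momentum recursion, exploit the cancellation of the additive noise $\u_i\v(i)$ so that the noise difference becomes the zero-mean multiplicative term $(R_u-\u_i\u_i\tran)(\tx_{i-1}-\tpsi_{i-1})$ with $\tx_{i-1}-\tpsi_{i-1}=(\tx_{i-1}-\hw_{i-1})+\beta'\cw_{i-1}$, absorb the resulting $O(\mu^2)$ self-coupling into the contraction factor $\rho_1=1-\mu\nu/2$, drive the scalar recursion with the uniform (all-$i$) bound on $\bE\|\cw_i\|^2$ from Corollary~\ref{co:hb-cw-u2}, and convert back via $\tw_i=\hw_i-\beta\cw_i$. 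You also correctly identify the two load-bearing points — the need for the uniform rather than asymptotic $\cw$-bound, and the shared-initialization computation of $\bE\|\hw_{-1}-\tx_{-1}\|^2=O(\mu^2)$ — so no further comparison is needed.
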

	\begin{proof} See Appendix \ref{proof of thm 3}.
	\end{proof}
	\noindent {\color{black}Similar to Corollary \ref{co:hb-cw-u2} and \ref{co:hb-cw-u2-4}, Theorem \ref{thm:gfat-stochastic gradient method-dist-hw} also has two implications. First, it holds that 
		\eq{\label{28adsah-2}
			\bE\|\w_i-\x_i\|^2 = O(\mu^2),\quad \forall i = 0,1,2,\cdots
		}
		Besides, since $\rho_1^i\to 0$ and $i\rho_1^{i}$ as $i\to \infty$, we also conclude 
		\eq{\label{asdklj7234-2}
			\limsup_{i\to \infty} \bE\|\w_i-\x_i\|^2  =O \left( \frac{\delta^2 \sigma_s^2 \mu^2}{\nu^2 (1-\beta)} \right).
		}
		
		}
	Theorem \ref{thm:gfat-stochastic gradient method-dist-hw}  establishes that the standard and momentum LMS algorithms are fundamentally equivalent since their iterates evolve close to each other at all times for sufficiently small step-sizes. More interpretation of this result is discussed in Section \ref{interpretation}. 

	\section{Equivalence in the General Case}
	\label{sec: equi-ge}
	We now extend the analysis from quadratic risks to more general risks (such as logistic risks). The analysis in this case is more demanding because the Hessian matrix of $J(w)$ is now $w-$dependent, but the same equivalence conclusion will continue to hold as we proceed to show.
	
	\subsection{Equivalence in the General Case}
	Note from the momentum recursion \eqref{sta-hb iteration 2} that
	\eq{
		\label{ge-hb-hw}
		\hw_i=&\left(I_M-\frac{\mu_m}{1-\beta}\H_{i-1}\right)\hw_{i-1}+\frac{\mu_m \beta^\prime}{1-\beta}\H_{i-1}\cw_{i-1} +\frac{\mu_m}{1-\beta}\s_i(\bpsi_{i-1}),
	}
	where $\H_{i-1}$ is defined by (\ref{beta prime-22}). In the quadratic case, this matrix was constant and equal to the covariance matrix, $R_u$. Here, however, it is time-variant and depends on the error vector,
	$\widetilde{\bpsi}_{i-1}$, as well. Likewise, for the standard stochastic gradient iteration \eqref{stochastic gradient method-2}, we obtain that
	the error recursion in the general case is given by:
	\beqn
	\label{ge-stochastic gradient method-tx}
	\tx_i =(I_M-\mu  \R_{i-1})\tx_{i-1} +\mu   \s_i(\x_{i-1}),
	\eeqn
	where we are introducing the matrix
	\be \R_{i-1}=\int_0^1 \grad^2_w J(w^o - r \tx_{i-1})dr\ee
	and $\tx_i=w^o-\x_i$. Note that $\H_{i-1}$ and $\R_{i-1}$ are different matrices. In contrast, in the quadratic case, they are both equal to $R_u$.
	
	Under the assumed condition (\ref{cond.stepsize}) relating $\{\mu,\mu_m\}$, if we subtract \eqref{ge-stochastic gradient method-tx} from \eqref{ge-hb-hw} we obtain:
	\eq{
		\label{ge-dist-hw-tx}
		\hw_i-\tx_i=&\ (I_M-\mu \H_{i-1})(\hw_{i-1}-\tx_{i-1}) + \mu(\R_{i-1}-\H_{i-1})\tx_{i-1} \nnb
		& + {\mu \beta^\prime}\H_{i-1}\cw_{i-1} + \mu[\s_i(\bpsi_{i-1})-\s_i(\x_{i-1})].
	}
	In the quadratic case, the second term on the right-hand side is zero since $\R_{i-1}=\H_{i-1}=R_u$. It is
	the presence of this term that makes the analysis more demanding in the general case.
	
	To examine how close $\hw_i$ gets to $\tx_i$ for each iteration, we start by noting that	
	\eq{
		\label{ge-dist-hw-tx-e}
		\bE\|\hw_i-\tx_i\|^2
		& = \bE \left\| (I_M-\mu \H_{i-1})(\hw_{i-1}-\tx_{i-1})+  \mu(\R_{i-1}-\H_{i-1})\tx_{i-1} + {\mu \beta^\prime}\H_{i-1}\cw_{i-1} \right\|^2 \nnb
		&\ \quad + \mu^2 \bE\| \s_i(\bpsi_{i-1})-\s_i(\x_{i-1}) \|^2.
	}
	Now, applying a similar derivation to the one used to arrive at \eqref{sta-hw update-expe-filt} in Appendix \ref{app:thm-2}, and the inequality $\|a+b\|^2\leq 2\|a\|^2+2\|b\|^2$, we can conclude from (\ref{ge-dist-hw-tx-e}) that
	\eq{
		\label{ge-dist-hw-tx-e-2}
		\bE\|\hw_i-\tx_i\|^2
		\le&\ (1-\mu \nu)\bE \| \hw_{i-1}-\tx_{i-1}\|^2 + \frac{2\mu \beta^{\prime 2} \delta^2 }{\nu} \bE\|\cw_{i-1} \|^2 \nnb
		&\ + \frac{2\mu}{\nu} \bE \| (\R_{i-1}-\H_{i-1})\tx_{i-1} \|^2 + \mu^2 \bE\| \s_i(\bpsi_{i-1})-\s_i(\x_{i-1}) \|^2.
	}
	Using the Cauchy-Schwartz inequality we can bound the cross term as
%
	\bqq
	\bE\| (\R_{i-1}\hspace{-1mm}-\hspace{-1mm}\H_{i-1})\tx_{i-1}\|^2 \hspace{-0.5mm} \le \hspace{-0.5mm} \bE (\| \R_{i-1}-\H_{i-1} \|^2 \|\tx_{i-1}\|^2 ) \hspace{-0.5mm} \le \hspace{-0.5mm} \sqrt{\bE \| \R_{i-1}-\H_{i-1} \|^4\, \bE\|\tx_{i-1}\|^4}.\label{bound-72}
	\eqq
	In the above inequality, the term $\bE\|\tx_{i-1}\|^4$ can be bounded by using the result of Lemma \ref{lm: x4 bound}. Therefore, we focus on bounding  $\bE \| \R_{i-1}-\H_{i-1} \|^4$ next. To do so, we need to introduce the following smoothness assumptions on the second and fourth-order moments of the gradient noise process and on the Hessian matrix of the risk function. These assumptions hold automatically for important cases of interest, such as least-mean-squares and logistic regression problems --- see Appendix \ref{app-feasibility} for the verification.
	
	\begin{assumption}
		\label{ass:noise-lipschitz}
		Consider the iterates $\bpsi_{i-1}$ and $\x_{i-1}$ that are generated by
		the momentum recursion \eqref{sgfat-1} and the stochastic gradient recursion \eqref{stochastic gradient method-2}. It is assumed that the gradient noise process satisfies:
		\eq{
			\bE [\|\s_i(\bpsi_{i-1})\hspace{-0.8mm}-\hspace{-0.8mm}\s_i(\x_{i-1})\|^2|\filt_{i-1}] & \le \xi_1 \|\bpsi_{i-1}\hspace{-0.8mm}-\hspace{-0.8mm}\x_{i-1}\|^2,\label{cond-1a} \\
			\bE [\|\s_i(\bpsi_{i-1})\hspace{-0.8mm}-\hspace{-0.8mm}\s_i(\x_{i-1})\|^4|\filt_{i-1}] & \le \xi_2 \|\bpsi_{i-1}\hspace{-0.8mm}-\hspace{-0.8mm}\x_{i-1}\|^4.\label{cond-2a}
		}
		for some constants $\xi_1$ and $\xi_2$.
		\qd
	\end{assumption}
	
	\begin{assumption}
		\label{ass: hessian lipschitz}
		The Hessian of the risk function $J(w)$ in \eqref{general prob} is Lipschitz continuous, i.e., for any two variables $w_1, w_2 \in \dom\ J(w)$, it holds that
		\eq{
			\|\grad_w ^2 J(w_1) - \grad_w ^2 J(w_2)\| \le \kappa \|w_1 - w_2 \|.
		}
		for some constant $\kappa\geq 0$. \qd
		
	\end{assumption}
	
	\noindent Using these assumptions, we can now establish two auxiliary results in preparation for the main equivalence theorem in the general case.
	
	\begin{lemma}[{\bf Uniform bound}]
		\label{lm: hw-tx-4-bound}
		Consider the standard and momentum stochastic gradient recursions \eqref{stochastic gradient method-2} and \eqref{sgfat-1}-\eqref{sgfat-2} and assume they start from the same initial states, namely, $\bpsi_{-2}=\w_{-2}=\x_{-1}$. We continue to assume conditions (\ref{ass: gfat}), and (\ref{cond.stepsize}). {\color{black}Under Assumptions \ref{ass: cost function}, \ref{ass: noise-4}, \ref{ass:beta-cond}, \ref{ass:noise-lipschitz}} and for sufficiently small step-sizes $\mu$, the following result holds:
		{\color{black}
		\eq{\label{23789asdnm}
		\bE\|\tpsi_i-\tx_i\|^4 = 
		O\left( \frac{\delta^4 (i+1) \rho_2^{i+1}\mu}{\nu^3} + \frac{\delta^4\sigma_s^4}{\nu^6}\mu^2\right),
		}
		where $\rho_2 = 1 - \mu\nu/4$.
	}
	\end{lemma}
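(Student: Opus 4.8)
The plan is to reduce everything to a fourth-order bound on the single difference $\hw_i-\tx_i$ and then invoke the already-established moment bounds. First I would use the transformation \eqref{transform} to express $\tpsi_i$ through $\hw_i$ and $\cw_i$. Writing $\bpsi_i=\w_i+\beta_1(\w_i-\w_{i-1})$ and substituting $\tw_i=\hw_i-\beta\cw_i$ and $\tw_{i-1}=\hw_i-\cw_i$ from \eqref{transform}, one obtains the clean identity $\tpsi_i=\hw_i-c\,\cw_i$ with $c=\beta-\beta_1(1-\beta)$, so that $c=\beta$ for heavy-ball and $c=\beta^2$ for Nesterov; in both cases $0\le c\le \beta\le 1$. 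Consequently $\tpsi_i-\tx_i=(\hw_i-\tx_i)-c\,\cw_i$ and, by $\|a+b\|^4\le 8\|a\|^4+8\|b\|^4$, $\bE\|\tpsi_i-\tx_i\|^4\le 8\bE\|\hw_i-\tx_i\|^4+8c^4\bE\|\cw_i\|^4$. Corollary \ref{co:hb-cw-u2-4} already gives $\bE\|\cw_i\|^4=O(\mu_m^2\rho_2^{i+1}+\mu_m^4)$, which (using $\mu_m=(1-\beta)\mu$) is dominated term-by-term by the claimed right-hand side. Hence it suffices to prove $\bE\|\hw_i-\tx_i\|^4=O\big(\delta^4(i+1)\rho_2^{i+1}\mu/\nu^3+\delta^4\sigma_s^4\mu^2/\nu^6\big)$.

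For that I would start from the difference recursion \eqref{ge-dist-hw-tx}. I split off the conditionally-deterministic part $\u_{i-1}$ collecting the first three terms of \eqref{ge-dist-hw-tx} (which is $\filt_{i-1}$-measurable) from the noise difference $\s_i(\bpsi_{i-1})-\s_i(\x_{i-1})$, which has zero conditional mean by Assumption \ref{ass: noise-4}. Since $\nu I_M\le \H_{i-1}\le \delta I_M$, for small $\mu$ the leading factor contracts, $\|I_M-\mu\H_{i-1}\|\le 1-\mu\nu$, and the convexity inequality $(\alpha p+q)^4\le \alpha p^4+q^4/(1-\alpha)^3$ with $\alpha=1-\mu\nu$ yields $\|\u_{i-1}\|^4\le (1-\mu\nu)\|\hw_{i-1}-\tx_{i-1}\|^4+\|\p_{i-1}\|^4/(\mu\nu)^3$, where $\p_{i-1}=\mu(\R_{i-1}-\H_{i-1})\tx_{i-1}+\mu\beta^\prime\H_{i-1}\cw_{i-1}$ gathers the two perturbations.

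The key simplification --- and the reason Assumption \ref{ass: hessian lipschitz} is not needed for this lemma --- is that I would bound the Hessian-difference term by the crude, deterministic estimate $\|\R_{i-1}-\H_{i-1}\|\le 2\delta$ (both Hessians are $\le\delta I_M$ by Assumption \ref{ass: cost function}) together with $\|\H_{i-1}\|\le\delta$, giving $\|\p_{i-1}\|^4\le 8\mu^4(2\delta)^4\|\tx_{i-1}\|^4+8\mu^4\beta^{\prime 4}\delta^4\|\cw_{i-1}\|^4$ and hence a driving term bounded, up to constants, by $(\mu\delta^4/\nu^3)\bE\|\tx_{i-1}\|^4+(\mu\beta^{\prime 4}\delta^4/\nu^3)\bE\|\cw_{i-1}\|^4$. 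I then expand $\bE[\|\hw_i-\tx_i\|^4\mid \filt_{i-1}]$: the term linear in the noise vanishes by zero conditional mean, the quadratic and quartic noise terms are controlled by Assumption \ref{ass:noise-lipschitz}, and the stray third-moment cross term is dispatched by Young's inequality. Crucially, every noise-induced contribution carries a factor $\mu^2$ or higher and, through Assumption \ref{ass:noise-lipschitz}, is proportional to $\|\bpsi_{i-1}-\x_{i-1}\|^{2}=\|\tpsi_{i-1}-\tx_{i-1}\|^{2}\le 2\|\hw_{i-1}-\tx_{i-1}\|^2+2c^2\|\cw_{i-1}\|^2$ (and its fourth-power analogue). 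The pieces proportional to $\|\hw_{i-1}-\tx_{i-1}\|^4$ merely perturb the contraction factor to $1-\mu\nu+O(\mu^2)\le 1-\mu\nu/2$, while the $\|\cw_{i-1}\|^2,\|\cw_{i-1}\|^4$ pieces are extra driving terms bounded by Corollaries \ref{co:hb-cw-u2} and \ref{co:hb-cw-u2-4} and are of strictly higher order in $\mu$.

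Collecting everything produces a scalar recursion $\bE\|\hw_i-\tx_i\|^4\le (1-\mu\nu/2)\bE\|\hw_{i-1}-\tx_{i-1}\|^4+g_{i-1}$, whose dominant driving term is $g_{i-1}\approx (\mu\delta^4/\nu^3)\bE\|\tx_{i-1}\|^4$. Substituting Lemma \ref{lm: x4 bound}, $\bE\|\tx_{i-1}\|^4=O(\rho^{i}+\sigma_s^2 i\rho^{i}\mu^2+\sigma_s^4\mu^2/\nu^2)$ with $\rho=1-\mu\nu$, and unrolling (the matched initial condition $\bpsi_{-2}=\w_{-2}=\x_{-1}$ makes the initial gap $O(\mu)$ and thus negligible) gives two contributions: the constant-in-$i$ part $(\mu\delta^4/\nu^3)(\sigma_s^4\mu^2/\nu^2)$ summed against the geometric factor picks up $1/(1-\rho_2)=O(1/(\mu\nu))$ and yields the steady level $O(\delta^4\sigma_s^4\mu^2/\nu^6)$, while the $O(1)$ transient $\rho^{i}$ of Lemma \ref{lm: x4 bound}, convolved with the contraction rate $1-\mu\nu/2$, resonates into the $(i+1)\rho_2^{i+1}$ term with prefactor $\delta^4\mu/\nu^3$; here $\rho_2=1-\mu\nu/4$ is chosen precisely so that it dominates both decay rates $1-\mu\nu$ and $1-\mu\nu/2$ and absorbs the extra polynomial factor. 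This matches the claimed bound. I expect the main obstacle to be the bookkeeping of the self-referential noise terms --- verifying that the dependence of $\bE[\|\s_i(\bpsi_{i-1})-\s_i(\x_{i-1})\|^{4}\mid\filt_{i-1}]$ on $\|\tpsi_{i-1}-\tx_{i-1}\|^4$, the very quantity being bounded, is suppressed by enough powers of $\mu$ to be reabsorbed into the contraction rather than destabilizing the recursion.
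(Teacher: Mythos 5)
Your proposal is correct and follows essentially the same route as the paper's proof in Appendix H: the same reduction of $\tpsi_i-\tx_i$ to $\hw_i-\tx_i$ and $\cw_i$ via $\tpsi_i-\tx_i=(\hw_i-\tx_i)-\beta'\cw_i$ (your $c$ equals the paper's $\beta'$), the same crude deterministic bound $\|\R_{i-1}-\H_{i-1}\|\le 2\delta$ in place of the Hessian-Lipschitz assumption, the same contraction recursion $\bE\|\hw_i-\tx_i\|^4\le(1-\mu\nu/2)\bE\|\hw_{i-1}-\tx_{i-1}\|^4+O(\mu)\bE\|\tx_{i-1}\|^4+O(\mu)\bE\|\cw_{i-1}\|^4$, and the same unrolling against Lemma \ref{lm: x4 bound} and Corollary \ref{co:hb-cw-u2-4} with $\rho_2$ dominating the other decay rates. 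The only difference is presentational (you perform the $\tpsi\to\hw$ reduction at the outset rather than at the end), so no further comparison is needed.
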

	
	\begin{proof}
		See Appendix \ref{app-lm-hw-tx-4-bound}.
	\end{proof}
	
	\noindent Although sufficient for our purposes, we remark that the bound
	(\ref{23789asdnm}) for $\bE\|\tpsi_i-\tx_i\|^4$ is not 
	tight. The reason is that
	in the derivation in Appendix \ref{app-lm-hw-tx-4-bound} we employed a looser bound for
	the term $\bE\|(\R_{i-1}-\H_{i-1})\tx_{i-1}\|^4$ in order to avoid the appearance of
	higher-order powers, such as  $\bE\|\R_{i-1}-\H_{i-1}\|^8$
	and $\bE\|\tx_{i-1}\|^8$. To avoid this possibility, we employed the following bound
	(using (\ref{cost function assumption}) to bound $\|\R_{i-1}\|^4$ and $\|\H_{i-1}\|^4$ and the
	inequality $\|a+b\|^4\leq 8\|a\|^4+8\|b\|^4$):
	\eq{
	\bE \|(\R_{i-1}-\H_{i-1})\tx_{i-1}\|^4 &\le \bE \|\R_{i-1}-\H_{i-1}\|^4 \|\tx_{i-1}\|^4 \nnb
	&\le 8\bE\big\{ (\|\R_{i-1}\|^4 + \|\H_{i-1}\|^4) \|\tx_{i-1}\|^4 \big\} \le  16\delta^4\bE\|\tx_{i-1}\|^4.
	}
	Based on Lemma \ref{lm: hw-tx-4-bound}, we can now bound $\bE\|\R_{i-1}-\H_{i-1}\|^4$, which is what the following lemma states.
	\begin{lemma}[{\bf Bound on Hessian difference}]
		\label{lm: hw-tx-4-bound-hessian}
		Consider the same setting of Lemma~\ref{lm: hw-tx-4-bound}.  Under Assumptions \ref{ass: cost function}, \ref{ass: noise-4}, \ref{ass:beta-cond}, \ref{ass: hessian lipschitz} and for sufficiently small step-sizes $\mu$, the following two result holds:
		{\color{black}
			\eq{\label{23789asdnm-23bgasd}
				\bE \| \R_{i-1}-\H_{i-1} \|^4 = O\left( \frac{\delta^4 i \rho_2^{i}\mu}{\nu^3} + \frac{\delta^4\sigma_s^4}{\nu^6}\mu^2\right),
			}
			where $\rho_2 = 1 - \mu\nu/4$.
		}
	\end{lemma}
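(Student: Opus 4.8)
The plan is to derive the bound on $\bE\|\R_{i-1}-\H_{i-1}\|^4$ directly from the fourth-moment estimate for $\tpsi_{i-1}-\tx_{i-1}$ furnished by Lemma~\ref{lm: hw-tx-4-bound}, invoking only the Hessian-Lipschitz property of Assumption~\ref{ass: hessian lipschitz}. First I would write both matrix integrals over a common dummy variable $r\in[0,1]$, so that
\[
\R_{i-1}-\H_{i-1}=\int_0^1\big[\grad^2_w J(w^o-r\tx_{i-1})-\grad^2_w J(w^o-r\tpsi_{i-1})\big]\,dr.
\]
Inside the integrand the two arguments differ precisely by $r(\tpsi_{i-1}-\tx_{i-1})$, so Assumption~\ref{ass: hessian lipschitz} bounds the norm of the integrand by $\kappa\, r\,\|\tpsi_{i-1}-\tx_{i-1}\|$. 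Integrating the scalar factor $r$ over $[0,1]$ contributes a factor $1/2$, giving the pathwise inequality
\[
\|\R_{i-1}-\H_{i-1}\|\le \tfrac{\kappa}{2}\,\|\tpsi_{i-1}-\tx_{i-1}\|.
\]

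Next I would raise this inequality to the fourth power and take expectations, which yields
\[
\bE\|\R_{i-1}-\H_{i-1}\|^4\le \tfrac{\kappa^4}{16}\,\bE\|\tpsi_{i-1}-\tx_{i-1}\|^4 .
\]
Since $\kappa$ is a fixed constant, it is absorbed into the $O(\cdot)$ notation. It then remains only to insert the bound of Lemma~\ref{lm: hw-tx-4-bound}, evaluated at time index $i-1$ in place of $i$: replacing $i+1$ by $i$ and $\rho_2^{i+1}$ by $\rho_2^{i}$ there gives
\[
\bE\|\tpsi_{i-1}-\tx_{i-1}\|^4=O\!\left(\frac{\delta^4 i\,\rho_2^{i}\mu}{\nu^3}+\frac{\delta^4\sigma_s^4}{\nu^6}\mu^2\right),
\]
which, multiplied by the constant $\kappa^4/16$, is exactly the claimed bound \eqref{23789asdnm-23bgasd}.

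There is essentially no hard step: once the Hessian-Lipschitz assumption is in place, the lemma is a corollary of the preceding one. The only point requiring care is the bookkeeping of the time index, since Lemma~\ref{lm: hw-tx-4-bound} is stated for $\tpsi_i-\tx_i$ and must be shifted down by one, with the linear prefactor becoming $i$ rather than $i+1$ and the power of $\rho_2$ adjusting accordingly. No conditioning subtleties arise in passing from the pathwise inequality to its expectation, because $\R_{i-1}$ and $\H_{i-1}$ are both determined by $\tx_{i-1}$ and $\tpsi_{i-1}$, which are measurable with respect to $\filt_{i-1}$.
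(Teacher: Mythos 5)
Your proposal is correct and follows essentially the same route as the paper's own proof in Appendix~\ref{Hessian diff-app}: the pathwise bound $\|\R_{i-1}-\H_{i-1}\|\le\tfrac{\kappa}{2}\|\tpsi_{i-1}-\tx_{i-1}\|$ via Assumption~\ref{ass: hessian lipschitz}, followed by raising to the fourth power, taking expectations, and inserting the bound of Lemma~\ref{lm: hw-tx-4-bound} with the index shifted down by one. The index bookkeeping you flag is handled exactly as in the paper.
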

	
	\begin{proof}
		See Appendix \ref{Hessian diff-app}.
	\end{proof}
	
	\noindent With the upper bounds of $\bE \| \R_{i-1}-\H_{i-1} \|^4$ and $\bE\|\tx_{i-1}\|^4$ established in Lemma \ref{lm: hw-tx-4-bound-hessian} and Lemma \ref{lm: x4 bound} respectively,
	we are able to bound $\| (\R_{i-1}-\H_{i-1})\tx_{i-1}\|$ in \eqref{bound-72}, which in turn helps to establish the main equivalence result.
	
	\begin{theorem}[{\bf Equivalence for general risks}]
		\label{thm:ge-dist} Consider the standard and momentum stochastic gradient recursions
		\eqref{stochastic gradient method-2} and \eqref{sgfat-1}--\eqref{sgfat-2} and assume they start from the same
		initial states, namely, $\bpsi_{-2}=\w_{-2}=\x_{-1}$. Suppose conditions (\ref{ass: gfat})  and
		(\ref{cond.stepsize}) hold. {\color{black}Under Assumptions \ref{ass: cost function}, \ref{ass: noise-4}, \ref{ass:beta-cond}, \ref{ass:noise-lipschitz},
		and \ref{ass: hessian lipschitz}}, and for sufficiently small step-size $\mu$, it holds that
		{\color{black}
		\eq{
			\label{hb-stochastic gradient method-dist-hw-asdj}
			\bE\|\tw_i-\tx_i\|^2=O \left( \frac{\delta^2\sigma_s^2 i^2 \tau_2^{i+1} \mu^{3/2} }{(1-\beta)\nu^{5/2}} + \left[\frac{(\delta^2 + \gamma^2)\rho_1^{i+1}}{(1-\beta)^2} +
			\frac{\delta^2 \sigma_s^4 }{(1-\beta)\nu^2}\right]\mu^2 \right),\quad \forall i=0,1,2,3,\ldots
		}
		where $\rho_1 = 1 - \frac{\mu\nu}{2} \in (0,1)$ and $\tau_2 \define \sqrt{1 - \mu\nu/4}\in (0,1)$. 
	}
		
	\end{theorem}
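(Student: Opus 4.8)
The plan is to reduce $\bE\|\tw_i-\tx_i\|^2$ to two pieces that have essentially already been controlled: the transformed error $\bE\|\cw_i\|^2$ and the discrepancy $\bE\|\hw_i-\tx_i\|^2$ between the first transformed coordinate of the momentum recursion and the standard SGD error. Inverting the transformation \eqref{transform} gives the identity $\tw_i=\hw_i-\beta\cw_i$, so that $\tw_i-\tx_i=(\hw_i-\tx_i)-\beta\cw_i$ and hence
\[
\bE\|\tw_i-\tx_i\|^2 \le 2\,\bE\|\hw_i-\tx_i\|^2 + 2\beta^2\,\bE\|\cw_i\|^2.
\]
The second term is already an $O(\mu^2)$ quantity by Corollary \ref{co:hb-cw-u2} (recalling $\mu_m=(1-\beta)\mu$, so that $\mu_m^2=(1-\beta)^2\mu^2$), and it supplies exactly the $\frac{(\delta^2+\gamma^2)\rho_1^{i+1}}{(1-\beta)^2}\mu^2$ piece of the claimed bound, with $\rho_1=1-\mu\nu/2$ arising because Corollary \ref{co:hb-cw-u2} has $\rho_1=1-\frac{\mu_m\nu}{2(1-\beta)}$. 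The whole theorem therefore rests on bounding $\bE\|\hw_i-\tx_i\|^2$.

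For that term I would iterate the scalar inequality \eqref{ge-dist-hw-tx-e-2},
\[
\bE\|\hw_i-\tx_i\|^2 \le (1-\mu\nu)\,\bE\|\hw_{i-1}-\tx_{i-1}\|^2 + D_{i-1},
\]
where the driving term $D_{i-1}$ collects $\frac{2\mu\beta^{\prime 2}\delta^2}{\nu}\bE\|\cw_{i-1}\|^2$, the cross term $\frac{2\mu}{\nu}\bE\|(\R_{i-1}-\H_{i-1})\tx_{i-1}\|^2$, and the gradient-noise difference $\mu^2\,\bE\|\s_i(\bpsi_{i-1})-\s_i(\x_{i-1})\|^2$. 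Each of these I would bound in turn: the first again by Corollary \ref{co:hb-cw-u2}; the gradient-noise term using Assumption \ref{ass:noise-lipschitz} via \eqref{cond-1a} together with $\bpsi_{i-1}-\x_{i-1}=-(\tpsi_{i-1}-\tx_{i-1})$ and the fourth-moment bound of Lemma \ref{lm: hw-tx-4-bound} (passing to second moments by Jensen), which turns out to be of strictly higher order in $\mu$; and the cross term via the Cauchy--Schwarz step \eqref{bound-72}, feeding in Lemma \ref{lm: hw-tx-4-bound-hessian} for $\bE\|\R_{i-1}-\H_{i-1}\|^4$ and Lemma \ref{lm: x4 bound} for $\bE\|\tx_{i-1}\|^4$. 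With $D_{i-1}$ bounded, the recursion unrolls as $\bE\|\hw_i-\tx_i\|^2 \le (1-\mu\nu)^{i+1}\bE\|\hw_{-1}-\tx_{-1}\|^2 + \sum_{j=0}^{i}(1-\mu\nu)^{i-j}D_j$; the common initial state $\bpsi_{-2}=\w_{-2}=\x_{-1}$ makes $\hw_{-1}$ and $\tx_{-1}$ a single gradient step apart, so $\bE\|\hw_{-1}-\tx_{-1}\|^2=O(\mu^2)$ and the first term is a transient $(1-\mu\nu)^{i+1}O(\mu^2)$ that folds into the $\rho_1^{i+1}\mu^2$ contribution (using $1-\mu\nu\le\rho_1$).

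The main obstacle is the cross term $\bE\|(\R_{i-1}-\H_{i-1})\tx_{i-1}\|^2$, which vanishes identically in the quadratic case (where $\R_{i-1}=\H_{i-1}=R_u$) and is precisely what forces the general analysis and produces the fractional power $\mu^{3/2}$. By Cauchy--Schwarz it is at most $\sqrt{\bE\|\R_{i-1}-\H_{i-1}\|^4}\,\sqrt{\bE\|\tx_{i-1}\|^4}$; the transient part of the Hessian-difference bound from Lemma \ref{lm: hw-tx-4-bound-hessian} scales like $\mu$ (not $\mu^2$), so its square root contributes a $\mu^{1/2}$, which multiplied by the $O(\mu)$ square root of $\bE\|\tx_{i-1}\|^4$ from Lemma \ref{lm: x4 bound} yields an $O(\mu^{3/2})$ transient contribution to $D_j$. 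After the prefactor $\frac{2\mu}{\nu}$ and the geometric summation $\sum_j(1-\mu\nu)^{i-j}(\cdot)$ — which contributes a factor $O(1/(\mu\nu))$ on the persistent part and accumulates the transient polynomial-times-$\tau_2^{\,j}$ terms (with $\tau_2=\sqrt{\rho_2}$ and $1-\mu\nu\le\tau_2$) into an $i^2\tau_2^{i+1}$ envelope — one recovers the two kinds of terms in the statement: the transient $\frac{\delta^2\sigma_s^2 i^2\tau_2^{i+1}}{(1-\beta)\nu^{5/2}}\mu^{3/2}$ and the persistent $\frac{\delta^2\sigma_s^4}{(1-\beta)\nu^2}\mu^2$. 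The remaining work is purely bookkeeping: tracking the explicit dependence on $\delta,\nu,\gamma,\sigma_s$ and the powers of $(1-\beta)$, and repeatedly invoking $\mu_m=(1-\beta)\mu$ to convert the $\mu_m$-bounds of Section \ref{Sec: GFAT} into $\mu$-bounds.
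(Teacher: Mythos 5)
Your proposal follows essentially the same route as the paper's proof in Appendix \ref{thm-ge-equi-app}: the same decomposition $\tw_i-\tx_i=(\hw_i-\tx_i)-\beta\cw_i$, the same recursion \eqref{ge-dist-hw-tx-e-2} for $\bE\|\hw_i-\tx_i\|^2$ driven by Corollary \ref{co:hb-cw-u2}, Lemma \ref{lm: x4 bound} and Lemma \ref{lm: hw-tx-4-bound-hessian}, and the same Cauchy--Schwarz treatment of the cross term $\bE\|(\R_{i-1}-\H_{i-1})\tx_{i-1}\|^2$ as the source of the $O(\mu^{3/2})$ transient. The only cosmetic deviation is your handling of the gradient-noise difference: the paper folds $\xi_1\mu^2\bE\|\tpsi_{i-1}-\tx_{i-1}\|^2$ back into the recursion via $\bE\|\tpsi_{i-1}-\tx_{i-1}\|^2\le 2\bE\|\hw_{i-1}-\tx_{i-1}\|^2+2\bE\|\cw_{i-1}\|^2$ (turning the contraction factor into $1-\mu\nu/2$), whereas you bound it externally through Lemma \ref{lm: hw-tx-4-bound} and Jensen; both choices produce terms of the stated orders.
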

	\begin{proof}
		See Appendix \ref{thm-ge-equi-app}.
	\end{proof}
	\noindent {\color{black}Similar to Corollary \ref{co:hb-cw-u2}, \ref{co:hb-cw-u2-4} and Theorem \ref{thm:gfat-stochastic gradient method-dist-hw}, Theorem \ref{thm:ge-dist} implies that
		\eq{\label{28adsah-2-yuw}
			\bE\|\w_i-\x_i\|^2 = O(\mu^{3/2}),\quad \forall i = 0,1,2,\cdots
		}
		Besides, since $\rho_1^i\to 0$ and $i^2\tau_2^{i}\to 0$ as $i\to \infty$, we will also conclude 
		\eq{
			\label{hb-stochastic gradient method-dist-tw-aslhui}
			\limsup_{i\rightarrow \infty} \bE\|\tw_i-\tx_i\|^2=O\left(\frac{\delta^2 \sigma_s^4 \mu^2}{(1-\beta)\nu^2}\right)=O(\mu^2).
		}}
		
	{\color{black}\noindent {\bf Remark}
			{ When we refer to ``sufficiently small step-sizes'' in Theorems \ref{thm:gfat-stochastic gradient method-dist-hw} and \ref{thm:ge-dist}, we mean that step-sizes are smaller than the
			stability bound, and are also small enough to ensure a desirable level of mean-square-error based on the performance expressions.} 
	} 
	
	\subsection{Interpretation of Equivalence Result}
	\label{interpretation}
	The result of Theorem \ref{thm:ge-dist} shows that, for sufficiently small step-sizes, the trajectories of momentum and standard stochastic gradient methods remain within $O(\mu^{3/2})$ from each other for {\em every} $i$ (for quadratic cases the trajectories will remain within $O(\mu^2)$ as stated in Theorem \ref{thm:gfat-stochastic gradient method-dist-hw}).
	This means that these trajectories evolve together for all practical purposes and, hence, we shall say that the two implementations are \rev{``equivalent'' (meaning that their trajectories remain close to each other in the mean-square-error sense).}
	
	A second useful insight from Theorem \ref{thm:gfat-stochastic gradient method-dist-hw} is that
	the momentum method is essentially equivalent to running a standard stochastic gradient method with a larger step-size (since $\mu>\mu_m$). This
	interpretation explains why the momentum method is observed to converge faster during the transient phase albeit towards a worse MSD level in steady-state than the standard method. This is because, as is well-known in the adaptive filtering literature \citep{Sayed2011adaptive, sayed2014adaptation} that larger step-sizes for stochastic gradient method do indeed lead to faster convergence but worse limiting performance.  
	
	In addition, Theorem \ref{thm:ge-dist} enables us to compute the steady-state MSD performance of the momentum stochastic gradient method. It is guaranteed by Theorem \ref{thm:ge-dist} that momentum method is equivalent to standard stochastic gradient method with larger step-size, $\mu=\mu_m/(1-\beta)$. Therefore, once we compute the MSD performance of the standard stochastic gradient, according to \citep{haykin2008adaptive, Sayed2011adaptive, sayed2014adaptation}, we will also know the MSD performance for the momentum method.
	
	Another consequence of the equivalence result is that
	any benefits that would be expected from a momentum {\color{black}stochastic gradient descent} can be attained by simply using a standard stochastic gradient implementation with a larger step-size; this is achieved without the additional computational or memory burden that the momentum method entails.
	
	Besides the theoretical analysis given above, there is an intuitive explanation as to why the momentum variant leads to worse steady-state performance. While the momentum terms $\w_i-\w_{i-1}$ and $\bpsi_i-\bpsi_{i-1}$ can smooth the convergence trajectories, and hence accelerate the convergence rate, they nevertheless introduce
	additional noise into the evolution of the algorithm because all iterates $\w_i$ and $\bpsi_i$ are distorted by perturbations.
	This fact illustrates the essential difference between stochastic methods with constant step-sizes, and stochastic or deterministic methods with decaying step-sizes: in the former case, the presence of gradient noise essentially eliminates the benefits of the momentum term.

	\subsection{Stochastic Gradient Method with Diminishing Momentum}\label{diminishing momentum}
	{\color{black} \citep{tygert2016poor,yuan2016influence} suggest one useful technique to retain the advantages of the momentum implementation by employing a {\em diminishing} momentum parameter, $\beta(i)$, and by ensuring $\beta(i)\rightarrow 0$ in order not to degrade the limiting performance of the implementation. By doing so, the momentum term will help accelerate the convergence rate during the transient phase {\color{black} because it will smooth the trajectory \citep{nedic2001convergence,xiao2010dual,lan2012optimal}}. On the other hand, momentum will not cause degradation in MSD performance because the momentum effect would have died before the algorithm reaches state-state. }
	
	{\color{black} According to \citep{tygert2016poor,yuan2016influence}, we adapt the momentum stochastic method into the following algorithm}
	\bqq
	\hspace{-0.4cm}\bpsi_{i-1}\hspace{-0.2cm} &=&\hspace{-0.2cm}\w_{i-1} + \beta_{1}(i) (\w_{i-1} - \w_{i-2}), \label{d-sgfat-1}\\
	\hspace{-0.4cm}\w_i \hspace{-0.2cm} &=&\hspace{-0.2cm} \bpsi_{i-1} \hspace{-0.1cm}-\hspace{-0.1cm} \mu \grad_w Q(\bpsi_{i-1};\btheta_i)\hspace{-0.1cm} +\hspace{-0.1cm} \beta_{2}(i)(\bpsi_{i-1}\hspace{-0.1cm}-\hspace{-0.1cm}\bpsi_{i-2}), \label{d-sgfat-2}
	\eqq
	with the same initial conditions as in \eqref{sgfat-3}--\eqref{sgfat-4}. Similar to condition \eqref{ass: gfat}, $\beta_{1}(i)$ and $\beta_{2}(i)$ also need to satisfy
	\eq{
		\beta_{1}(i) + \beta_{2}(i) = \beta(i), \quad \beta_{1}(i) \beta_{2}(i)=0,
	}
	The efficacy of \eqref{d-sgfat-1}--\eqref{d-sgfat-2} will depend on how the momentum decay, $\beta(i)$, is selected. A satisfactory sequence $\{\beta(i)\}$ should decay slowly during the initial stages of adaptation so that the momentum term can induce an acceleration effect. However, the sequence $\{\beta(i)\}$ should also decrease drastically prior to steady-state so that the vanishing momentum term will not introduce additional gradient noise and degrade performance. One strategy, which is also employed in the numerical experiments in Section \ref{Sec: exp}, is to design $\beta(i)$ to decrease in a stair-wise fashion, namely, 
	\eq{\label{beta-decrease stair-wise}
		\beta(i)=
		\begin{cases}
			{\begin{array}{ll}
					\beta_0 & \mbox{if $i\in [1,T]$}, \\
					\beta_0/T^\alpha & \mbox{if $i\in [T+1,2T]$}, \\
					\beta_0/(2T)^\alpha & \mbox{if $i\in [2T+1,3T]$}, \\
					\beta_0/(3T)^\alpha & \mbox{if $i\in [3T+1,4T]$}, \\
					\cdots
				\end{array}}
			\end{cases}
		}
		where the constants $\beta_0\in[0,1)$, $\alpha\in (0,1)$ and $T>0$ determines the width of the stair steps. Fig.~\ref{fig:beta_vary} illustrates how $\beta(i)$ varies when $T=20$, $\beta_0=0.5$ and $\alpha=0.4$.
		
		\begin{figure}[h]
			\centering
			\includegraphics[scale=0.5]{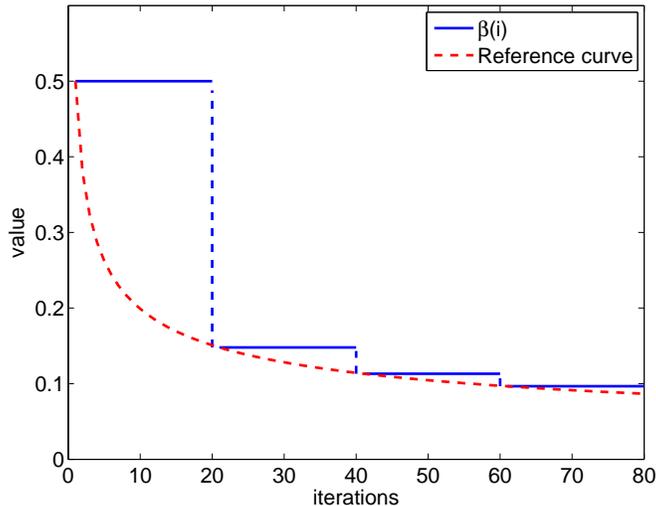}
			\vspace{-5mm}
			\caption{$\beta(i)$ changes with iteration $i$ according to \eqref{beta-decrease stair-wise}, where $\beta_0=0.5$, $T=20$ and $\alpha=0.4$. The reference curve is $f(i)=0.5/i^{0.4}$.}
			\label{fig:beta_vary}
		\end{figure}
		
		{\color{black}Algorithm \eqref{d-sgfat-1}--\eqref{d-sgfat-2} works well when $\beta(i)$ decreases according to \eqref{beta-decrease stair-wise} (see Section \ref{Sec: exp}). However, with Theorems \ref{thm:gfat-stochastic gradient method-dist-hw} and \ref{thm:ge-dist}, we find that this algorithm is essentially equivalent to the standard stochastic gradient method with decaying step-size, i.e., 
		\eq{\label{nclkjasdu}
		\x_i = \x_{i-1} - \mu_s(i) \grad_w Q(\x_{i-1};\theta_i), 
		}
		where 
		\eq{\label{mu_s-i}
		\mu_s(i) = \frac{\mu}{1-\beta(i)}
		}
		will decrease from $\mu/[1-\beta(0)]$ to $\mu$. In another words, the stochastic algorithm with decaying momentum is still not helpful.    
		
		}
		
		\section{Diagonal Step-size Matrices}
		\label{sec: extension}
		\noindent Sometimes it is advantageous to employ separate step-size for the individual entries of the weight vectors, see \citep{duchi2011adaptive}. In this section we comment on how the results from the previous sections extend to this scenario. First, we note that recursion (2) can be generalized to the following form, with a diagonal matrix serving as the step-size parameter:
		\eq{\label{stochastic gradient method-2-m}
			\x_i=\x_{i-1} - D \grad_w Q(\x_{i-1};\btheta_i),\;i\geq 0,
		}
		where $D=\mathrm{diag}\{\mu_1,\mu_2,\ldots,\mu_M\}$. Here, we continue to use the letter ``$\x$'' to refer to the variable iterates for the standard stochastic gradient descent iteration, while we reserve the letter ``$\w$'' for the momentum recursion. We let $\mua=\max\{\mu_1,\ldots,\mu_M\}$. Similarly, recursions \eqref{sgfat-1} and \eqref{sgfat-2} can be extended in the following manner:
		\bqq
		\hspace{-0.4cm}\bpsi_{i-1}\hspace{-0.2cm} &=&\hspace{-0.2cm}\w_{i-1} + B_1 (\w_{i-1} - \w_{i-2}), \label{sgfat-1-m}\\
		\hspace{-0.4cm}\w_i \hspace{-0.2cm} &=&\hspace{-0.2cm} \bpsi_{i-1} \hspace{-0.1cm}-\hspace{-0.1cm} D_m \grad_w Q(\bpsi_{i-1};\btheta_i)\hspace{-0.1cm} +\hspace{-0.1cm} B_2 (\bpsi_{i-1}\hspace{-0.1cm}-\hspace{-0.1cm}\bpsi_{i-2}), \label{sgfat-2-m}
		\eqq
		with initial conditions 
		\bqq
		\hspace{-0.2cm}&&\w_{-2} = \bpsi_{-2}=\mbox{initial states}, \label{sgfat-3-m}\\
		\hspace{-0.2cm}&&\w_{-1}=\w_{-2}-D_m {\grad_w Q}(\w_{-2};\btheta_{-1}),  \label{sgfat-4-m}
		\eqq
		where $B_1=\mathrm{diag}\{\beta^1_1,\ldots,\beta^1_M\}$ and
		$B_2=\mathrm{diag}\{\beta^2_1,\ldots,\beta^2_M\}$ are momentum coefficient matrices, while $D_m$ is a diagonal step-size matrix for momentum stochastic gradient method. In a manner similar to \eqref{ass: gfat}, we also assume that 
		\eq{
			\label{Lambda cond}
			0 \le B_k < I_M,\  k=1,2,\quad\quad B_1+B_2=B,\quad\quad B_1 B_2=0.
		}
		where $B=\mathrm{diag}\{\beta_1, \ldots, \beta_M \}$ and $0 < B < I_M$. In addition, we further assume that $B$ is not too close to $I_M$, i.e.
		\eq{\label{Lambda not close to 1}
			B \le (1-\epsilon) I_M,\ \mbox{for some constant $\epsilon > 0$}. 
		}
		
		\noindent The following results extend Theorems 1, 3, and 4 and they can be established following similar derivations.
		
		\vspace{1mm}
		\noindent \textbf{Theorem 1B (Mean-square stability).} \textit{Let Assumptions \ref{ass: cost function} and \ref{ass: noise} hold and recall conditions (\ref{Lambda cond}) and \eqref{Lambda not close to 1}. Then, for the momentum stochastic gradient method \eqref{sgfat-1-m}--\eqref{sgfat-2-m}, it holds under sufficiently small step-size $\mua$ that
			\eq{
				\limsup_{i \rightarrow \infty} \bE\|\tw_i\|^2 = O(\mua). \label{thm-conv-tw-diagonal}
			}
			\qd}
		
		%
		
		
		\noindent \textbf{Theorem 3B (Equivalence for quadratic costs).} \textit{Consider recursions \eqref{eq-LMS} and \eqref{sgfat-1-lms}--\eqref{sgfat-2-lms} with $\{\mu,\mu_m,\beta_1,\beta_2\}$ replaced by $\{D,D_m,B_1,B_2\}$. Assume they start from the same initial states, namely, {$\bpsi_{-2}=\w_{-2}=\x_{-1}$}.
			Suppose further that conditions \eqref{Lambda cond} and \eqref{Lambda not close to 1} hold, and that the step-sizes matrices $\{D,D_m\}$ satisfy a relation similar to \eqref{cond.stepsize}, namely,
			\eq{\label{cond.stepsize-m}
				D=(I-B)^{-1} D_m.
			}
			Then, it holds under sufficiently small $\mua$, that
			\eq{
				\label{hb-stochastic gradient method-dist-tw-2-m}
				\bE\|\tw_i-\tx_i\|^2=O(\musa),\quad \forall i=0,1,2,3,\ldots
			}
			\qd}
		
		\noindent \textbf{Theorem 4B (Equivalence for general costs).} \textit{Consider the stochastic gradient recursion \eqref{stochastic gradient method-2-m} and the momentum stochastic gradient recursions \eqref{sgfat-1-m}--\eqref{sgfat-2-m} to solve the general problem \eqref{general prob}. Assume they start from the same
			initial states, namely, $\bpsi_{-2}=\w_{-2}=\x_{-1}$. Suppose conditions (\ref{Lambda cond}), (\ref{Lambda not close to 1}), and
			(\ref{cond.stepsize-m}) hold. Under Assumptions \ref{ass: cost function}, \ref{ass: noise-4}, \ref{ass:noise-lipschitz},
			and \ref{ass: hessian lipschitz}, and for sufficiently small step-sizes, it holds that
			\eq{
				\label{hb-stochastic gradient method-dist-hw}
				\bE\|\tw_i-\tx_i\|^2=O(\mu^{3/2}_{\max}),\quad \forall i=0,1,2,3,\ldots
			}
			Furthermore, in the limit,
			\eq{
				\label{hb-stochastic gradient method-dist-tw}
				\limsup_{i\rightarrow \infty} \bE\|\tw_i-\tx_i\|^2=O(\mu_{\max}^2).
			}
			\qd}
		
		\section{Experimental Results}
		\label{Sec: exp}
		In this section we illustrate the main conclusions by means of
		computer simulations for both cases of mean-square-error designs and logistic regression designs. We also run simulations for algorithm \eqref{d-sgfat-1}--\eqref{d-sgfat-2} and verify its advantages in the stochastic context.
		
		\subsection{Least Mean-Squares Error Designs}
		\label{subsec: ER-LMS}
		We apply the standard LMS algorithm to \eqref{lms model}. To do so, 
		we generate data according to the linear regression model \eqref{data model}, 
		where $w^o\in \real^{10}$ is chosen randomly, and $\u_i \in \real^{10}$ is i.i.d and follows
		$
		\u_i \sim \mathcal{N}(0,\Lambda)
		$ where $\Lambda \in \real^{10\times 10}$ is randomly-generated diagonal matrix with positive diagonal entries.
		Besides,
		$\v(i)$ is also i.i.d and follows
		$
		\v(i) \sim \mathcal{N}(0,\sigma_s^2I_{10}),
		$
		where $\sigma_s^2=0.01$. All results are averaged over 300 random trials. For each trial we generated $800$ samples of $\u_i$, $\v(i)$ and $\d(i)$.
		
		We first compare the standard and momentum LMS algorithms using $\mu=\mu_m=0.003$. {\color{black}The momentum parameter $\beta$ is set as $0.9$}. Furthermore, we employ the heavy-ball option for the momentum LMS, i.e., $\beta_1=0, \beta_2=\beta$. Both the standard and momentum LMS methods are illustrated in {\color{black}the left plot in} Fig.~\ref{fig:lms_conv} with blue and red curves,  respectively. It is seen that the momentum LMS converges faster, but the MSD performance is much worse. Next we set $\mu_m=\mu(1-\beta)=0.0003$ and illustrate this case with the magenta curve. It is observed that the magenta and blue curves are almost indistinguishable, which confirms the equivalence predicted by Theorem \ref{thm:gfat-stochastic gradient method-dist-hw} for all time instants. We also  illustrate an implementation with a decaying momentum parameter $\beta(i)$ by the green curve. In this simulation, we set $\mu_m=0.003$ and make $\beta(i)$ decrease in a stair-wise fashion: when $i\in [1,100]$, $\beta(i)=0.9$; when $i\in [101,200]$, $\beta(i)=0.9/(100^{0.3})$; $\ldots$; when $i\in [2401,2500]$, $\beta(i)=0.9/(2400^{0.3})$. With this decaying $\beta(i)$, it is seen that the momentum LMS method recovers its faster convergence rate and attains the same steady-state MSD performance as the LMS implementation. {\color{black}Finally, we also implemented the standard LMS with initial step-size $\mu = 0.003$ and then decrease it gradually according to $\mu_s(i) = \mu/[1-\beta(i)]$. As implied by Theorem \ref{thm:gfat-stochastic gradient method-dist-hw}, it is observed that the green and black curves are also almost indistinguishable, which confirms that the LMS algorithm with decaying momentum is still equivalent to the standard LMS with appropriately chosen decaying step-sizes.} {\color{black}We also compared the standard and momentum LMS algorithms when $\mu=\mu_m=0.003$ and  $\beta$ is set as $0.5, 0.6, 0.7, 0.8$, and the same performance as the left plot in Fig. \ref{fig:lms_conv} is observed. To save space, we show the right plot in  Fig. \ref{fig:lms_conv} in which $\beta=0.5$ and omit the figures when $\beta$ is set as $0.6, 0.7, 0.8$. }
		
		\begin{figure}[h]
			\centering
			\includegraphics[scale=0.4]{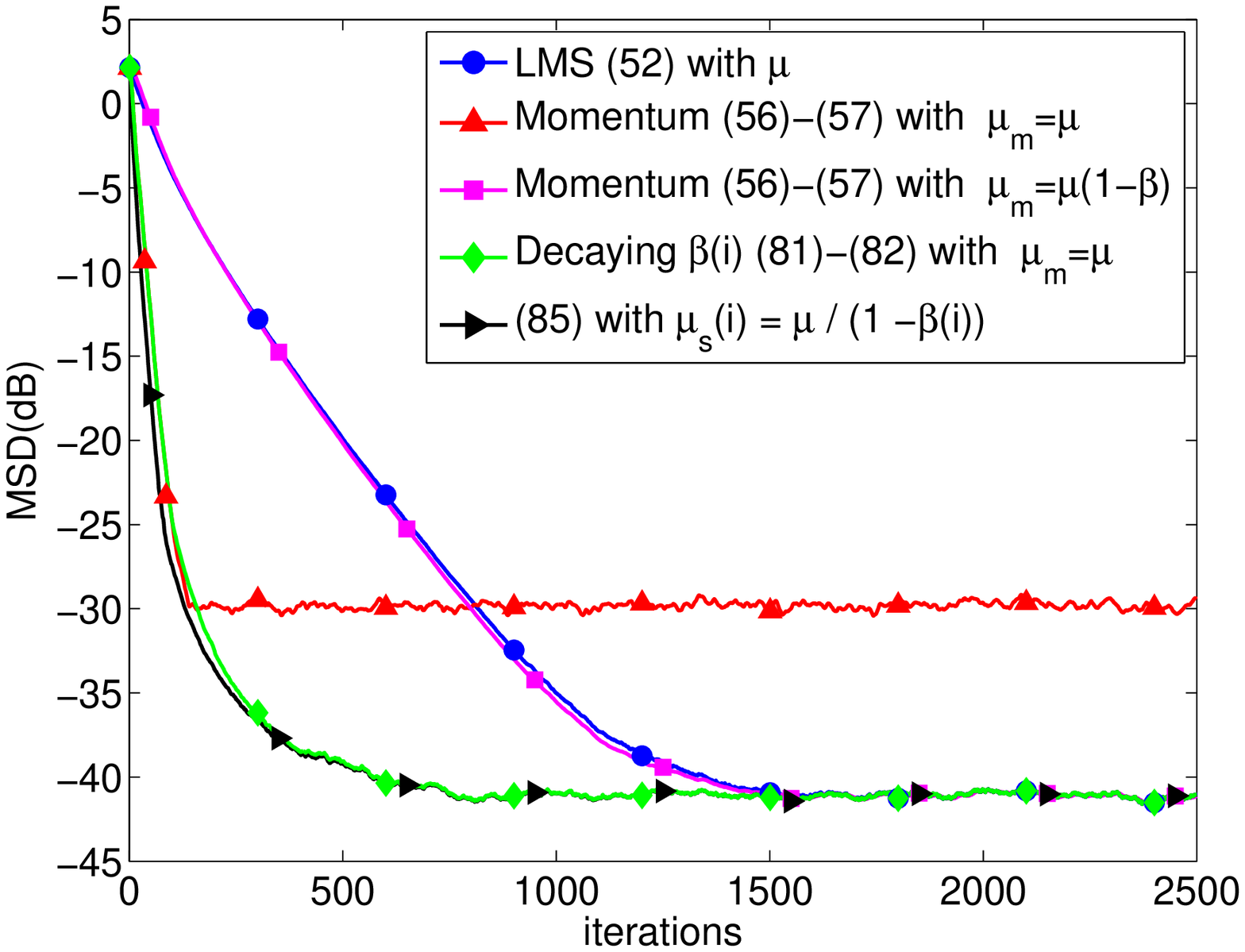}
			\includegraphics[scale=0.4]{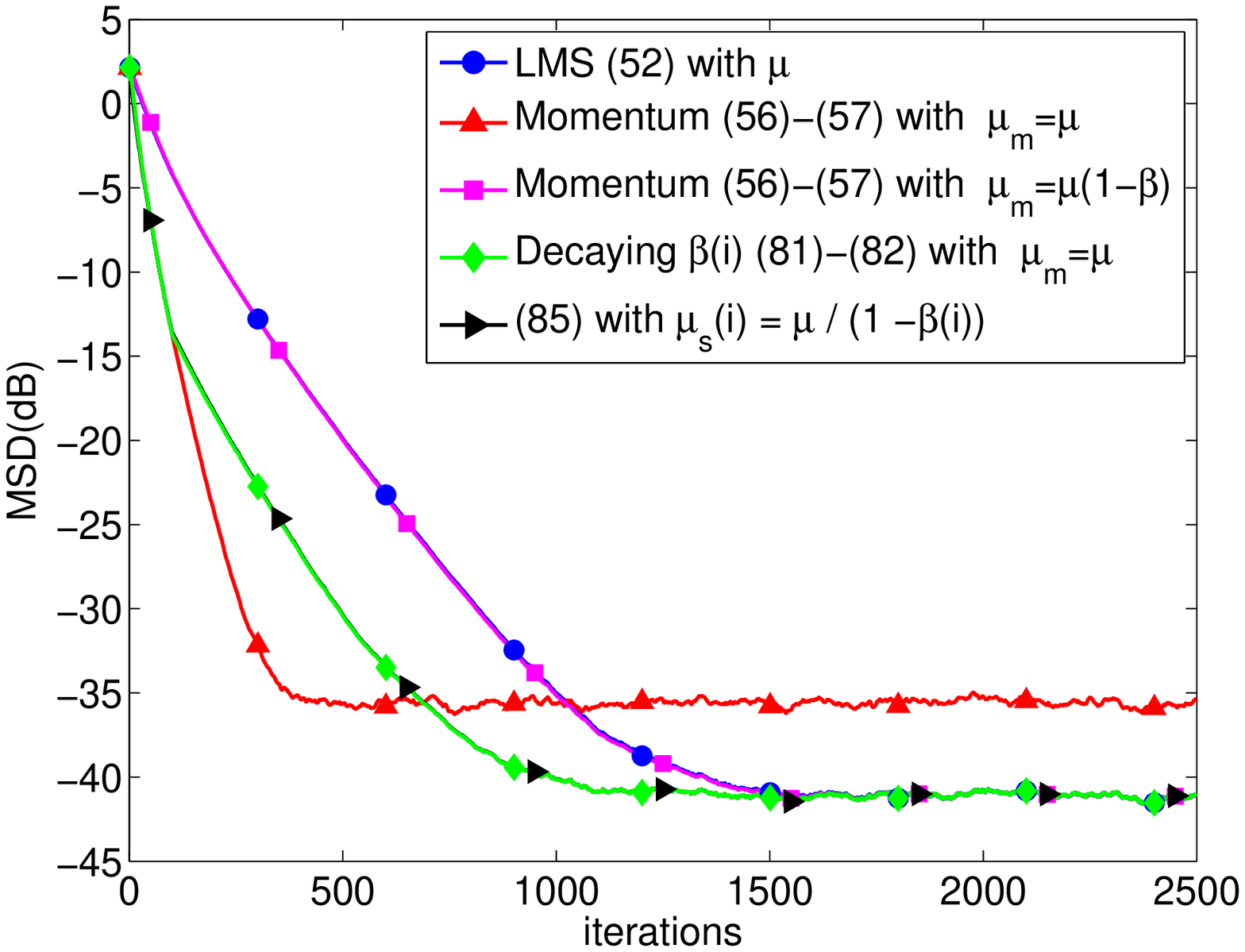}
			\vspace{-5mm}
			\caption{Convergence behavior of standard and momentum LMS ({\color{black}heavy-ball LMS}) algorithms applied to the mean-square-error design problem \eqref{lms model} with  {\color{black}$\beta=0.9$ in the left plot and $\beta=0.5$ in the right plot.} Mean-square-deviation (MSD) means $\bE\|w^o-\w_i\|^2$.}
			\label{fig:lms_conv}
		\end{figure}
		
		{\color{black}Next we employ the Nesterov's acceleration option for the momentum LMS method, and compare it with standard LMS. The experimental settings are exactly the same as the above except that $\beta_1=\beta$ and $\beta_2=0$. Both the standard and momentum LMS methods are illustrated in Fig. \ref{fig:lms_conv-2}. As implied by Theorem \ref{thm:gfat-stochastic gradient method-dist-hw}, it is observed that Nesterov's acceleration applied to LMS is equivalent to standard LMS with rescaled step-size. Besides, by comparing Figs. \ref{fig:lms_conv} and \ref{fig:lms_conv-2}, it is also observed that both momentum options, the heavy-ball and the Nesterov's acceleration, have the same performance. To save space, in the following experiments in Section \ref{subsec: ER-LR}--\ref{sec-cnn} we just show the performance of momentum method with the option of heavy-ball.
			
		\begin{figure}[h]
			\centering
			\includegraphics[scale=0.4]{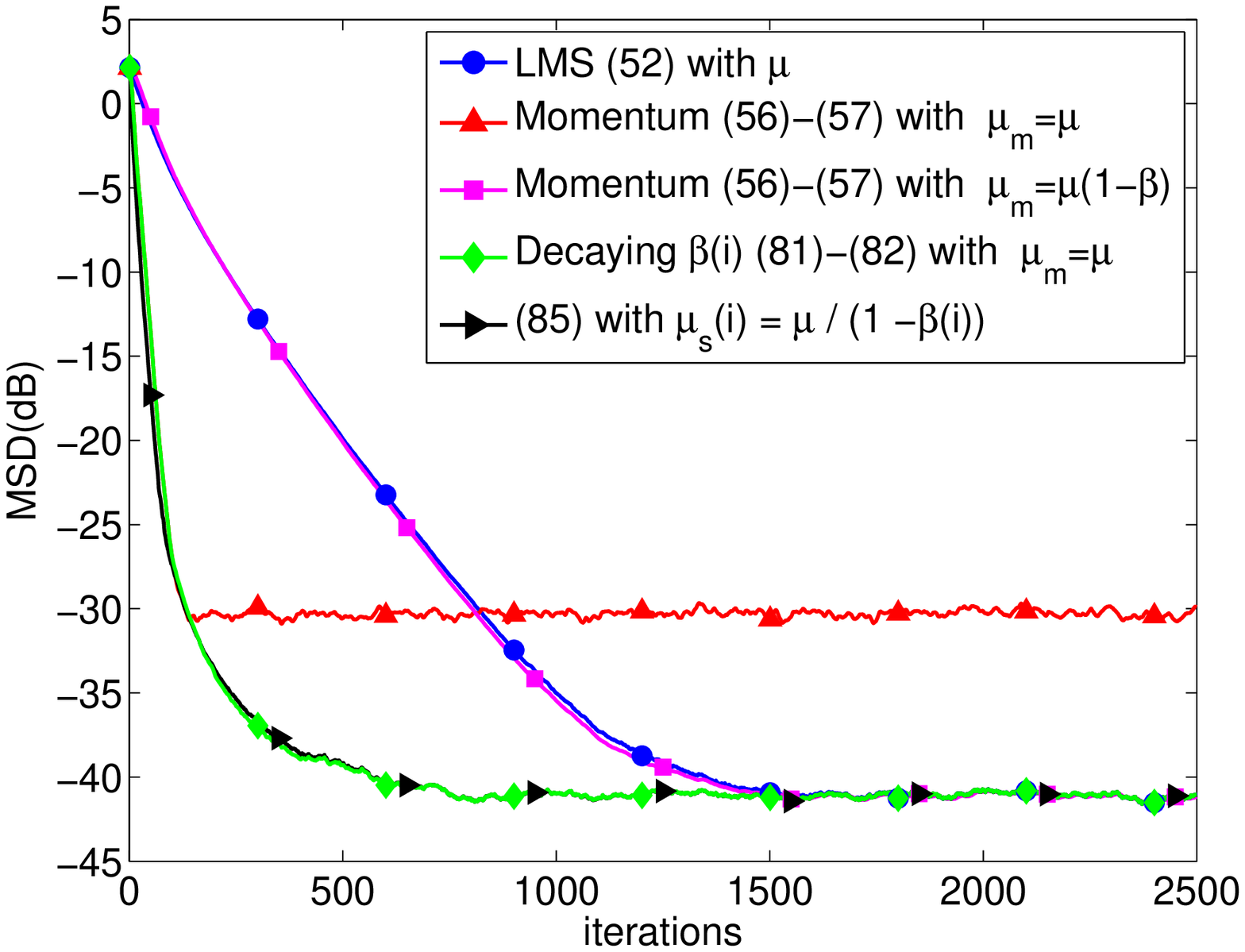}
			\includegraphics[scale=0.4]{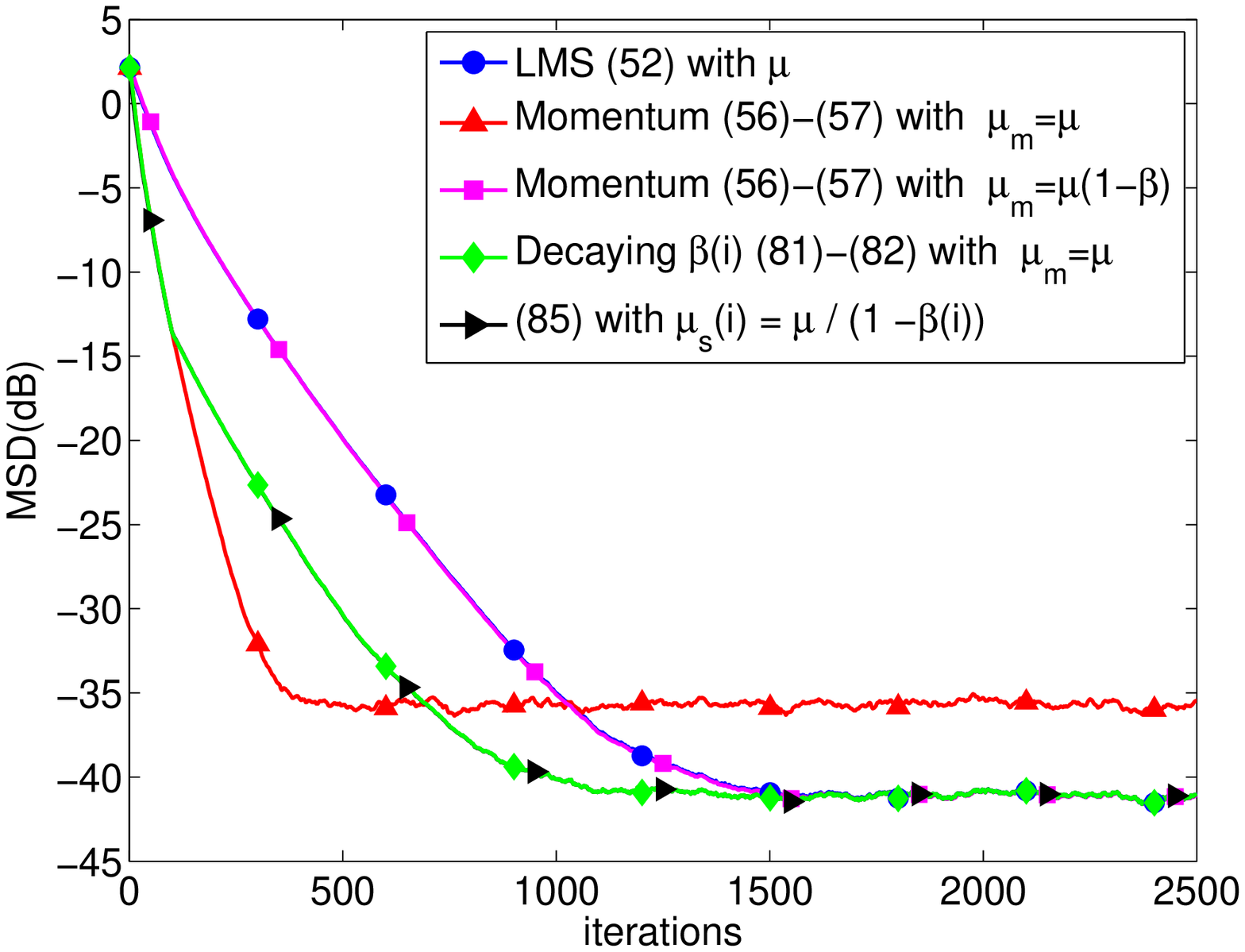}
			\vspace{-5mm}
			\caption{{\color{black}Convergence behavior of standard and momentum LMS (Nesterov's acceleration LMS) algorithms applied to the mean-square-error design problem \eqref{lms model} with $\beta=0.9$ in the left plot and $\beta=0.5$ in the right plot.}}
			\label{fig:lms_conv-2}
		\end{figure}
		} 
		
		\subsection{Regularized Logistic Regression}
		\label{subsec: ER-LR}
		We next consider a regularized logistic regression risk of the form:
		\eq{
			\label{app-prob-log-4}
			\ J(w)\define \frac{\rho}{2} \|w\|^2 + \bE \Big\{ \ln \big[ 1+ \exp(-\bgm(i) \h_i \tran w) \big] \Big\}
		}
		where the approximate gradient vector is chosen as
		\eq{
			{\grad_w  Q}(\w;\h_i,\bgm(i)) & = \rho \w-\frac{\exp(-\bgm(i) \h_i \tran \w)}{1+ \exp(-\bgm(i) \h_i \tran \w) } \bgm(i) \h_i
		}
		In the simulation, we generate $20000$ samples $(\h_i, \bm \gamma(i))$.
		Among these training points, $10000$ feature vectors $\h_i$ correspond to label $\bgm(i)=1$ and each $\h_i \sim \mathcal{N}(1.5\times \mathds{1}_{10},R_h)$ for some diagonal covariance $R_h$.
		The remaining $10000$ feature vectors $\h_i$ correspond to label $\bgm(i)=-1$ and each $\h_i \sim \mathcal{N}(-1.5\times \mathds{1}_{10},R_h)$. We set $\rho=0.1$. The optimal solution $w^o$ is computed via the classic gradient descent method. All simulation results shown below are averaged over $300$ trials.
		
		Similar to the least-mean-squares error problem, we first compare the standard and momentum stochastic methods using $\mu=\mu_m=0.005$. {\color{black}The momentum parameter $\beta$ is set to $0.9$}. These two methods are illustrated in Fig.~\ref{fig:lms_gap} with blue and red curves,  respectively. It is seen that the momentum method converges faster, but the MSD performance is much worse. Next we set $\mu_m=\mu(1-\beta)=0.0005$ and illustrate this case with the magenta curve. It is observed that the magenta and blue curves are indistinguishable, which confirms the equivalence predicted by Theorem \ref{thm:ge-dist} for all time instants. Again we illustrate an implementation with a decaying momentum parameter $\beta(i)$ by the green curve. In this simulation, we set $\mu_m=0.005$ and make $\beta(i)$ decrease in a stair-wise manner: when $i\in [1,200]$, $\beta(i)=0.9$; when $i\in [201,400]$, $\beta(i)=0.9/(200^{0.3})$; when $i\in [401,600]$, $\beta(i)=0.9/(400^{0.3})$; $\ldots$; when $i\in [1801,2000]$, $\beta(i)=0.9/(1800^{0.3})$. With this decaying $\beta(i)$, it is seen that the momentum method recovers its faster convergence rate and attains the same steady-state MSD performance as the stochastic-gradient implementation. {\color{black}Finally, we implemented the standard stochastic gradient descent with initial step-size $\mu_m = \mu = 0.005$ and then decrease it gradually according to $\mu_s(i) = \mu/[1-\beta(i)]$. As implied by Theorem \ref{thm:ge-dist}, it is observed that the green and black curves are almost indistinguishable, which confirms that the algorithm with decaying momentum is still equivalent to the standard stochastic gradient descent with appropriately chosen decaying step-sizes.}
		
		\begin{figure}[h]
			\centering
			\includegraphics[scale=0.5]{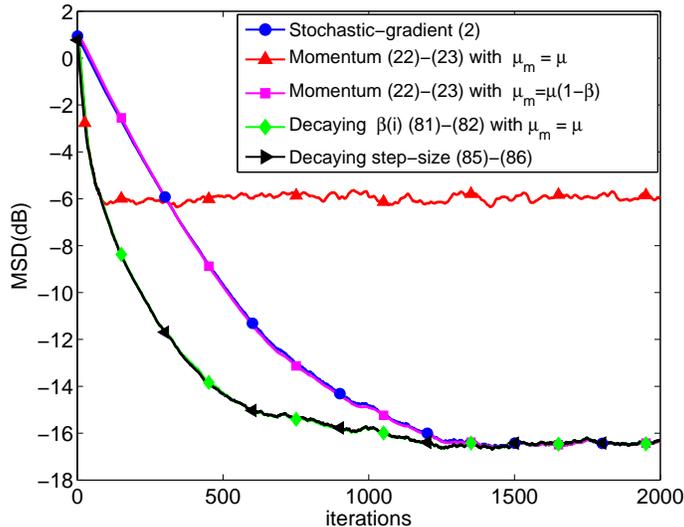}
			\vspace{-5mm}
			\caption{\small{Convergence behaviors of standard and momentum stochastic gradient methods applied to the logistic regression problem \eqref{app-prob-log-4}.}}
			\label{fig:lms_gap}
		\end{figure}
		
		{Next, we test the standard and momentum stochastic methods for regularized logistic regression problem over a benchmark data set --- the Adult Data Set\footnote{Source: \url{https://www.csie.ntu.edu.tw/~cjlin/libsvmtools/datasets/} or \url{http://archive.ics.uci.edu/ml/datasets/Adult}}. The aim of this dataset is to predict whether a person earns over $\$50$K a year based on census data such as age, workclass, education, race, etc. The set is divided into 6414 training data and 26147 test data, and each feature vector has 123 entries. In the simulation, we set $\mu=0.1$, $\rho=0.1$, and $\beta=0.9$. To check the equivalence of the algorithms, we set $\mu_m=(1-\beta)\mu=0.01$. 
			In Fig.~\ref{fig: lr_adult_data}, the curve shows how the accuracy performance, i.e., the percentage of correct prediction, over the test dataset evolved as the algorithm received more training data\footnote{To smooth the performance curve, we applied the weighted average technique from equation (74) of \citep{ying2015performance,ying2016performances}. 
			}.
			The horizontal x-axis indicates the number of training data used.
			It is observed that the momentum and standard stochastic gradient methods cannot be distinguished, which confirms their equivalence when training the Adult Data Set.
			
			{\color{black}For the experiments shown in this section, Section \ref{sec:further-verification} and \ref{sec-cnn}, we also tested the cases when $\beta$ is set as $0.5$, $0.6$, $0.7$, $0.8$. Since the experimental results with different $\beta$ are similar, we just plot the situation when $\beta=0.9$, a setting which is usually employed in practice \citep{szegedy2014going,krizhevsky2012imagenet,zhang2015text}.}
			
			\begin{figure}[h]
				\centering
				\includegraphics[scale=0.5]{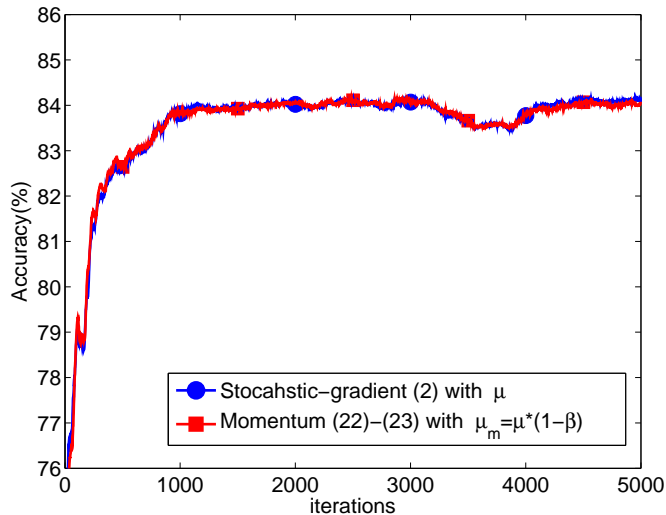}
				\vspace{-4mm}
				\caption{\small{Performance accuracy of the standard and momentum stochastic gradient methods applied to logistic regression classification on the adult data test set.}}
				\label{fig: lr_adult_data}
			\end{figure}
		}
		
		\subsection{Further Verification of Theorems \ref{thm:gfat-stochastic gradient method-dist-hw} and  \ref{thm:ge-dist}}
		\label{sec:further-verification}
		
		In this section we further illustrate the conclusions of
		Theorems \ref{thm:gfat-stochastic gradient method-dist-hw} and \ref{thm:ge-dist} by checking the behavior of the iterate difference, i.e., $\bE\|\w_i-\x_i\|^2$, between
		the standard and momentum stochastic gradient methods.
		
		For the least-mean-squares error problem, the selection of $\u_i$, $\v(i)$, $\d(i)$ and $\beta$ is the same as in the simulation generated earlier in Subsection \ref{subsec: ER-LMS}. 
		For some specific step-size $\mu$, $\x_i$ is the iterate generated through LMS recursion \eqref{eq-LMS} with step-size $\mu$, and $\w_i$ is the iterate generated momentum LMS recursion \eqref{sgfat-1-lms}--\eqref{sgfat-2-lms} with step-size $\mu_m=\mu(1-\beta)$.
		Now we introduce the maximum difference:
		\eq{d_{\max}(\mu)=\max_i \bE\|\w_{i}-\x_{i}\|^2}
		and the difference at steady state 
		\eq{d_{\rm ss}(\mu)=\limsup_{i\rightarrow \infty} \bE\|\w_{i}-\x_{i}\|^2.}
		Note that both $d_{\max}(\mu)$ and $d_{\rm ss}(\mu)$ are related with $\mu$ and we will examine how they vary according to different step-sizes. Obviously, since $\bE\|\w_{i}-\x_{i}\|^2 \le d_{\max}(\mu)$, if $d_{\max}(\mu)$ is illustrated to be on the order of $O(\mu^2)$, then it follows that $\bE\|\w_i-\x_i\|^2=O(\mu^2)$ for $i\ge 0$. Similarly, if we can illustrate $d_{\rm ss}(\mu)=O(\mu^2)$, then it follows that $\limsup_{i\rightarrow \infty} \bE\|\w_{i}-\x_{i}\|^2=O(\mu^2)$.
		
		Note that the fact $d_{\max}(\mu)=c\mu^2$ for some constant $c$ holds if and only if
		\eq{\label{db relation}
			d_{\max}(\mu) (\mathrm{dB}) = 20 \log \mu + 10\log c,
		}
		where $d_{\max}(\mu) (\mathrm{dB})=10\log d_{\max}(\mu)$.	
		Relation \eqref{db relation} can be confirmed with red circle line in Fig.~\ref{fig:lms_gap-2}. In this simulation, we choose $8$ different step-size values $\{\mu_k\}_{k=1}^8$, and it can be verified that each data pair $\Big(\log \mu_k,\ d_{\max}(\mu_k)(\mathrm{dB}) \Big)$ satisfies relation \eqref{db relation}. For example, in the red circle solid line, at $\mu_1=10^{-2}$ we read $d_{\max}(\mu_1)({\rm dB})=-32\mathrm{dB}$; while at $\mu_2=10^{-4}$ we read $d_{\max}(\mu_2)({\rm dB})=-72\mathrm{dB}$. It can be verified that
		\eq{
			\label{gap btw mu1 and mu2 -1}
			d_{\max}(\mu_1) ({\rm dB}) - d_{\max}(\mu_2) ({\rm dB}) & = 20 (\log \mu_1 - \log \mu_2) = 40. 
		}
		Using a similar argument, the blue square solid line can also implies that $d_{\mathrm{ss}}=O(\mu^2)$.
		
		Figure \ref{fig:lms_gap-2} also reveals the order of $d_{\max}$ and $d_{\mathrm{ss}}$, with magenta and green dash lines respectively, for the regularized logistic regression problem from Subsection \ref{subsec: ER-LR}. With the same argument as above, $d_{\rm ss}(\mu)$ can be confirmed on the order of $O(\mu^2)$. Now we check the order of $d_{\max}(\mu)$. The fact that $d_{\max}(\mu)=c\mu^{3/2}$ holds if and only if
		\eq{\label{db relation-2}
			d_{\max}(\mu) (\mathrm{dB}) = 15 \log \mu + 10\log c.
		}
		According to the above relation, at $\mu_1=10^{-2}$ and $\mu_2=10^{-4}$ we should have
		\eq{
			\label{gap btw mu1 and mu2 -2}
			d_{\max}(\mu_1) ({\rm dB}) - d_{\max}(\mu_2) ({\rm dB}) & = 15 (\log \mu_1 - \log \mu_2) = 30. 
		}
		However, in the triangle magenta dash line we read $d_{\max}(\mu_1)=-30\mathrm{dB}$ while $d_{\max}(\mu_2)=-66\mathrm{dB}$ and hence
		$$ 30 {\rm dB} < d_{\max}(\mu_1) ({\rm dB}) - d_{\max}(\mu_2) ({\rm dB}) < 40 {\rm dB}$$
		Therefore, the order of $d_{\max}$ should be between $O(\mu^{3/2})$ and $O(\mu^2)$, which still confirms Theorem \ref{thm:ge-dist}. 
		\begin{figure}[h]
			\centering
			\includegraphics[scale=0.5]{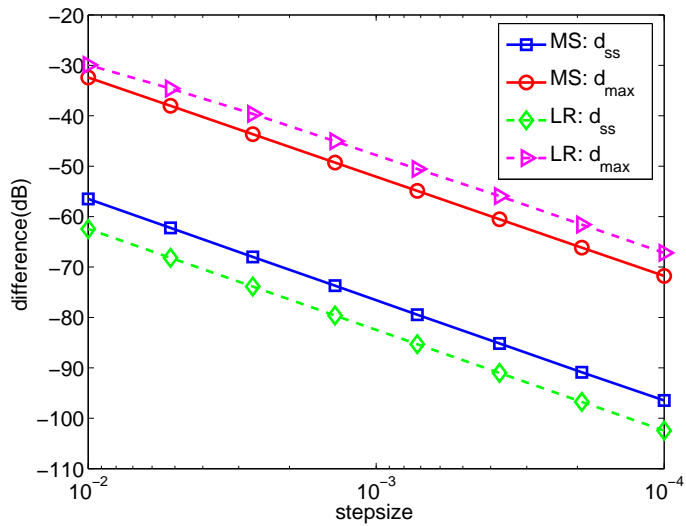}
			\vspace{-5mm}
			\caption{\small{$d_{\max}$ and $d_{\rm ss}$ as a function of the
					step-size $\mu$. \textit{MS} stands for mean-square-error and \textit{LR} stands for logistic regression.}
			}
			\label{fig:lms_gap-2}
		\end{figure}
		
		{\color{black}\subsection{Visual Recognition}\label{sec-cnn}
		In this subsection we illustrate the conclusions of this work by re-examining the problem of training a neural network to recognize objects from images. We employ the CIFAR-10 database\footnote{\url{https://www.cs.toronto.edu/~kriz/cifar.html}}, which is a classical benchmark dataset of images for visual recognition. The CIFAR-10 dataset consists of 60000 color images in 10 classes, each with $32\times 32$ pixels. There are 50000 training images and 10000 test images. Similar to \citep{Sutskever2013}, and since the focus of this paper is on optimization, we only report training errors in our experiment. 
		
		To help illustrate that the conclusions also hold for non-differentiable and non-convex problems, in this experiment we train the data with two different neural network structures: (a) a 6-layer fully connected neural network and (b) a 4-layer convolutional neural network, both with ReLU activation functions. For each neural network, we will compare the performance of the momentum and standard stochastic gradient methods.
		
		\textbf{6-Layer Fully Connected Nuerual Network.} For this neural network structure, we employ the softmax measure with $\ell_2$ regularization as a cost objective, and the ReLU as an activation function. Each hidden layer has 100 units, the coefficient of the $\ell_2$ regularization term is set to $0.001$, and the initial value $w_{-1}$ is generated by a Gaussian distribution with $0.05$ standard deviation. We employ mini-batch stochastic-gradient learning with batch size equal to $100$. First, we apply a momentum backpropagation (i.e., momentum stochastic gradient) algorithm to train the 6-layer neural network. The momentum parameter is set to $\beta = 0.9$, and the initial step-size $\mu_m$ is set to $0.01$. To achieve better accuracy, we follow a common technique (e.g., \citep{szegedy2014going}) and reduce $\mu_m$ to $0.95\mu_m$ after every epoch.  With the above settings, we attain an accuracy of about $90\%$ in $80$ epochs. 
		
		However, what is interesting, and somewhat surprising, is that the same $90\%$ accuracy can also be achieved with the standard backpropagation (i.e., stochastic gradient descent) algorithm in $80$ epochs. According to the step-size relation $\mu=\mu_m/(1-\beta)$, we set the initial step-size $\mu$ of SGD to $0.1$. Similar to the momentum method, we also reduce $\mu$ to $0.95\mu$ after every epoch for SGD, and hence the relation $\mu=\mu_m/(1-\beta)$ still holds for each iteration. From Figure \ref{fig: digit_recog}, we observe that the accuracy performance curves for both scenarios, with and without momentum, are overlapping even when the overall risk is not necessarily convex or differentiable. 
		
		\begin{figure}[h]
			\centering
			\includegraphics[scale=0.5]{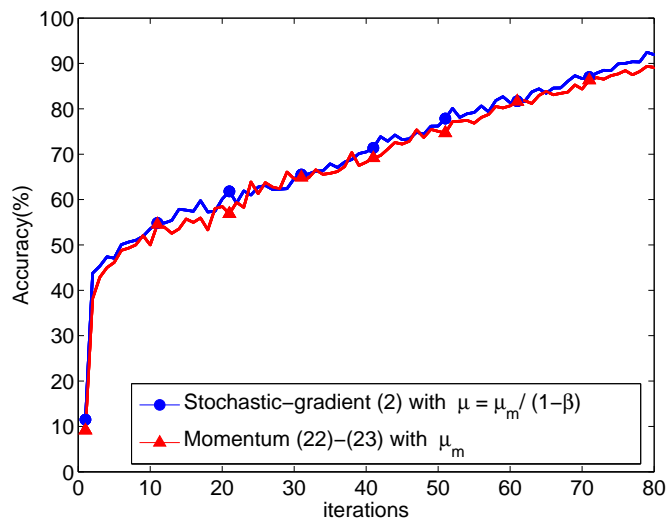}
			\vspace{-4mm}
			\caption{\small{{Classification accuracy of the standard and momentum stochastic gradient methods applied to a 6-layer fully-connected neural network on the CIFAR-10 test data set.}}}
			\label{fig: digit_recog}
		\end{figure}
		
		\textbf{4-Layer Convolutional Neural Network.} In a second experiment, we consider a 4-layer convolutional neural network. We employ the same objective and activation functions. This network has the structure:
		\eq{\nonumber
		\mbox{$\big($conv -- ReLU -- pool$\big)$ $\times 2$  -- $\big($ affine -- ReLU $\big)$ -- affine}
		}
		In the first convolutional layer, we use filters of size $7\times 7\times 3$, stride value $1$, zero padding $3$, and the number of these filters is $32$. In the second convolutional layer, we use filters of size $7\times 7 \times 32$, stride value $1$, zero padding $3$, and the number of filters is still $32$. We implement MAX operation in all pooling layers, and the pooling filters are of size $2\times 2$, stride value $2$ and zero padding $0$. The hidden layer has $500$ units. The coefficient of the $\ell_2$ regularization term is set to $0.001$, and the initial value $w_{-1}$ is generated by a Gaussian distribution with $0.001$ standard deviation. We employ mini-batch stochastic-gradient learning with batch size equal to $50$, and the step-size decreases by $5\%$ after each epoch.
		
		First, we apply the momentum backpropagation algorithm to train the neural network. The momentum parameter is set at $\beta = 0.9$, and we performed experiments with step-sizes $\mu_m \in \{0.01, 0.005, 0.001, 0.0005, 0.0001\}$ and find that $\mu_m = 0.001$ gives the highest training accuracy  after $10$ epochs. In Fig. \ref{fig: digit_recog-2} we draw the momentum stochastic gradient method with red curve when $\mu_m = 0.001$ and $\beta=0.9$. The curve reaches an accuracy of $94\%$. Next we set the step-size of the standard backpropagation $\mu = \mu_m/(1-\beta) = 0.01$, and illustrate its convergence performance with the blue curve. It is also observed that the two curves are indistinguishable. The numerical results shown in Figs. \ref{fig: digit_recog} and \ref{fig: digit_recog-2} imply that the performance of momentum SGD can still be achieved by standard SGD by properly adjusting the step-size according to $\mu=\mu_m/(1-\beta)$. 
%
		
		\begin{figure}[h]
			\centering
			\includegraphics[scale=0.5]{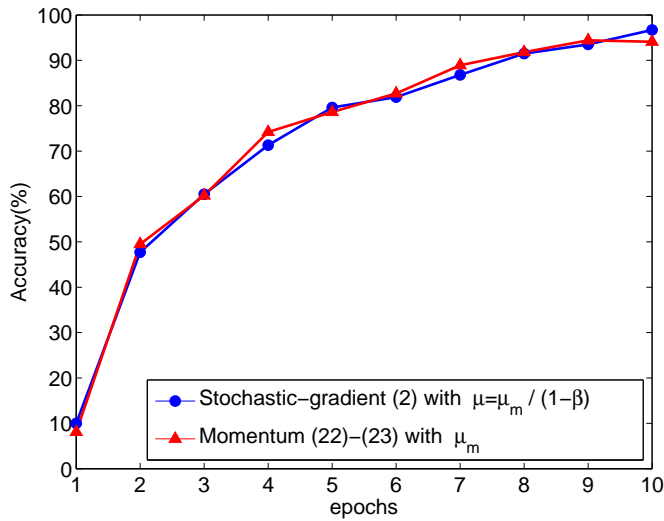}
			\vspace{-4mm}
			\caption{\small{{Classification accuracy of the standard and momentum stochastic gradient methods applied to a 4-layer convolutional neural network on the CIFAR-10 training data set.}}}
			\label{fig: digit_recog-2}
		\end{figure}

		}

		\section{Comparison for Larger Step-sizes}
		\label{sec:stable}
		According to Theorem \ref{thm:ge-dist}, the equivalence results between the standard and momentum stochastic gradient methods hold for sufficiently small step-sizes $\mu$. When larger values for $\mu$ are used, the $O(\mu^{3/2})$ term is not negligible any longer so that the momentum and gradient-descent implementations are not equivalent anymore under these conditions. While in practical implementations small step-sizes are widely employed in order to ensure satisfactory steady-state MSD performance, one may still wonder how both algorithms would compare to each other under larger step-sizes. For example, it is known that the larger the step-size value is, the more likely it is that the stochastic-gradient algorithm will become unstable. Does the addition of momentum help enlarge the stability range and allow for proper adaptation and learning over a wider range of step-sizes? 
		
		Unfortunately, the answer to the above question is generally negative. In fact, we can construct a simple numerical example in which the momentum can hurt the stability range. 
		This example considers the case of quadratic risks, namely problems of the form \eqref{lms model}. We suppose $M=5$, $\u_i\sim \mathcal{N}(0,0.5 I_5)$ and $\d(i)=\u_i\tran w^o + \v(i)$ where $\v(i)\sim \mathcal{N}(0,0.01)$. We compare the convergence of standard LMS and Nesterov's acceleration method with fixed parameter $\beta_2=0$ and $\beta=0.5$. Both algorithms are set with the same step-size $\mu=\mu_m=0.4$, which is a relatively large step-size. All results are averaged over $1000$ random trials. For each trial we generated $200$ samples of $\u_i$, $\v(i)$ and $\d(i)$. In Fig.~\ref{Fig:na-LMS-rho}, it shows that standard LMS converges at $\mu=0.4$ while momentum LMS diverges, which indicates that momentum LMS has narrower stability range than standard LMS. 	
		\begin{figure}[h]
			\centering
			\includegraphics[scale=0.5]{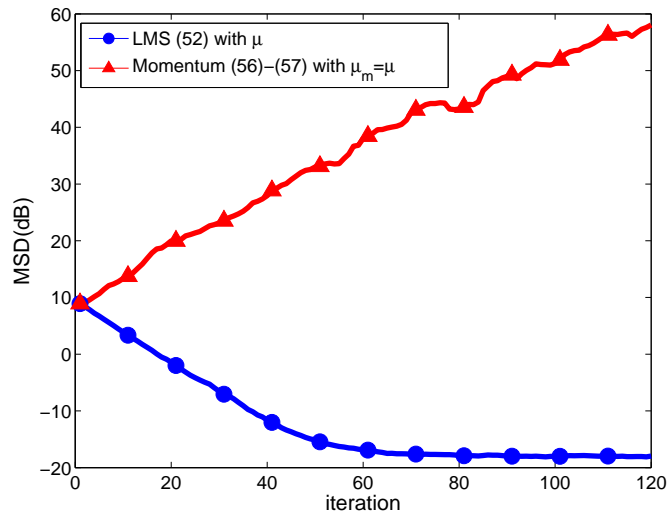}
			\vspace{-5mm} 
			\caption{Convergence comparison between standard and momentum LMS algorithms when $\mu=\mu_m=0.4$ and $\beta=0.5$.}
			\label{Fig:na-LMS-rho}
		\end{figure}

		\section{Conclusion}
		\label{Sec: conclu}
		In this paper we analyzed the convergence and performance behavior 
		of momentum stochastic gradient methods in the constant step-size and slow adaptation regime.
		The results establish that the momentum method is equivalent to
		employing the standard stochastic gradient method with a re-scaled (larger) step-size value.
		The size of the re-scaling is determined by the momentum parameter, $\beta$. The analysis was carried out
		under general conditions and was not limited to quadratic risks, but is also applicable to broader choices of the
		risk function. Overall, the conclusions indicate that the well-known benefits of momentum constructions in the
		deterministic optimization scenario do not necessarily carry over to the stochastic setting when
		adaptation becomes necessary and gradient noise is present. The analysis also comments on a way to retain some of the advantages of the
		momentum construction by employing a decaying momentum parameter: one that starts at a constant level and decays to zero over time. 
		adaptation is retained without the often-observed degradation in MSD performance.
		
			\acks{This work was supported in part by NSF grants CIF-1524250 and ECCS-1407712, by DARPA project N66001-14-2-4029, and by a Visiting Professorship from the Leverhulme Trust, United Kingdom.
			The authors would like to thank PhD student Chung-Kai Yu for contributing to Section~\ref{diminishing momentum}, \rev{and undergraduate student Gabrielle Robertson for contributing to the simulation in Section~\ref{sec-cnn}.}}  
			

		\appendix
		
		\section{Proof of Lemma \ref{lm: x4 bound}}
		\label{app-lm-4-sta}
		It is shown in Eq.~(3.76) of \citep{sayed2014adaptation} that $\bE\|\tw_i\|^4$ evolves as follows:
		\eq{
			\label{x4-evolve}
			\bE\|\tw_i\|^4 \le (1-\mu \nu) \bE\|\tw_{i-1}\|^4 + a_1 \mu^2\bE \|\tw_{i-1}\|^2 + a_2 \mu^4,
		}
		{\color{black}
		where the constants $a_1$ and $a_2$ are defined as
		\eq{\label{9ewyu}
		a_1 \define 16\sigma_s^2,\quad a_2 \define 3\sigma_{s,4}^4.
		}
		If we iterate \eqref{tw-evolve-2} we find that
		\eq{
			\label{tw-evolve-2-5}
			\bE\|\tw_{i}\|^2 \le (1-\mu \nu)^{i+1} \bE \| \tw_{-1}\|^2 +  a_3 \mu,
		}
		where {$a_3$ is defined as
		\eq{\label{238anck9}
		a_3 \define \frac{\sigma_s^2}{\nu}.
		}
	}
		Substituting inequality \eqref{tw-evolve-2-5} into \eqref{x4-evolve}, we find that it holds for each iteration $i=0,1,2,\ldots$
		\eq{
		\bE\|\tw_i\|^4  \le & (1-\mu \nu) \bE\|\tw_{i-1}\|^4  + a_2 \mu^4 +a_1a_3 \mu^3 + a_4 \mu^2 (1-\mu \nu)^{i},\nnb
		=&\ \rho \bE\|\tw_{i-1}\|^4  + a_2 \mu^4 +a_1a_3 \mu^3 + a_4 \mu^2 \rho^i
		\label{x4-evolve-2}
	}
		where 
		\eq{\label{238adsh}
		\rho \define 1- \mu \nu ,\quad a_4 \define a_1\bE\|\tw_{-1}\|^2.
	} Iterating the inequality \eqref{x4-evolve-2} we get
		\eq{
			\label{x4-evolve-4}
			 \bE\|\tw_i\|^4 \le&\ \rho^{i+1} \bE \|\tw_{-1}\|^4 \hspace{-0.5mm}+\hspace{-0.5mm} a_2 \mu^4 \sum_{s=0}^i \rho^s \hspace{-0.5mm} +\hspace{-0.5mm} a_1a_3 \mu^3 \sum_{s=0}^i \rho^s \hspace{-0.5mm}+\hspace{-0.5mm} a_4\mu^2 (i+1) \rho^{i} \nnb
			\le&\ \rho^{i+1} \bE \|\tw_{-1}\|^4 \hspace{-0.5mm}+\hspace{-0.5mm} \frac{a_2 \mu^4}{1-\rho} + \frac{a_1a_3 \mu^3}{1-\rho} + a_4\mu^2 (i+1) \rho^{i} \nnb
			\le&\ \rho^{i+1} \bE \|\widetilde{\w}_{-1}\|^4 + a_5 \mu^3 + a_6 \mu^2  + a_4 \mu^2 (i+1) \rho^{i} \nnb
			\overset{\rm (a)}{\le}&\ \rho^{i+1} \bE \|\widetilde{\w}_{-1}\|^4 + 2 a_6 \mu^2  + a_4 \mu^2 (i+1) \rho^{i},
		}
		where 
		\eq{\label{07234ha}
		a_5 \define \frac{a_2}{\nu}, \quad a_6 \define \frac{a_1a_3}{\nu},
		}
		and (a) holds because for sufficiently small $\mu$ such that $a_6 \mu^2 > a_5 \mu^3$, we have 
		\eq{
		a_5 \mu^3 + a_6 \mu^2 = 2 a_6 \mu^2 - (a_6 \mu^2 - a_5 \mu^3) \le 2 a_6 \mu^2.
		}

		Substituting \eqref{9ewyu}, \eqref{238anck9}, \eqref{238adsh} and \eqref{07234ha} into \eqref{x4-evolve-4}, we get
		\eq{
			\label{x4-evolve-4-237abk}
			\bE\|\tw_i\|^4 & \le \rho^{i+1}\bE\|\tw_{-1}\|^4  + A_1 \sigma_s^2 (i+1)\rho^{i}\mu^2 + \frac{A_2 \sigma_s^4\mu^2}{\nu^2} \nnb
			&= \rho^{i+1}\bE\|\tw_{-1}\|^4  + \frac{A_1}{\rho} \sigma_s^2 (i+1)\rho^{i+1}\mu^2 + \frac{A_2 \sigma_s^4\mu^2}{\nu^2}
		}
		for some constants $A_1$ and $A_2$.  When $\mu$ is sufficiently small, there must exist some constant $A_3$ such that 
		\eq{\label{a8c7hak}
		\frac{A_1}{\rho} = \frac{A_1}{1-\mu\nu}  \le A_3.
		}
		Therefore, \eqref{x4-evolve-4-237abk} becomes 
		\eq{
			\label{x4-evolve-4-237abk-78mnm}
			\bE\|\tw_i\|^4 \le \rho^{i+1}\bE\|\tw_{-1}\|^4  + A_3 \sigma_s^2 (i+1)\rho^{i+1}\mu^2 + \frac{A_2 \sigma_s^4\mu^2}{\nu^2}.
		}
	}

		\section{Proof of Lemma \ref{lm:momentum transform}}
		\label{app:lm-gfat-transform}
		We substitute the expression for the gradient noise from (\ref{noise-2}), evaluated at $\bpsi_{i-1}$, into \eqref{sgfat-2} to get:
		\bqq
		\w_{i}=\ \bpsi_{i-1} - \mu_m \grad_w J(\bpsi_{i-1})  + \beta_2 (\bpsi_{i-1} - \bpsi_{i-2}) - \mu_m \s_i(\bpsi_{i-1}).\label{hb iteration s}
		\eqq
		Let again $\tw_{i}=w^o-\w_{i}$ and $\tpsi_{i}=w^o-\bpsi_i$. Subtracting both sides of \eqref{hb iteration s} from $w^o$ gives:
		\beqn
		\label{hb iteration s - 2}
		\tw_{i}= \tpsi_{i-1} + \mu_m \grad_w J(\bpsi_{i-1}) - \beta_2(\bpsi_{i-1} - \bpsi_{i-2}) + \mu_m \s_i(\bpsi_{i-1}).
		\eeqn
		We now appeal to the mean-value theorem (relation (D.9) in \citep{sayed2014adaptation}) to
		write
		\beqn
		\grad  J_w(\bpsi_{i-1}) = -\Big(\int_0^1 \grad^2_w J_w(w^o - t \tpsi_{i-1})dt \Big) \tpsi_{i-1} \define -\H_{i-1}\tpsi_{i-1} \label{grad J = -Hw}.
		\eeqn
		and express the momentum term in the form
		\beqn
		\label{momentum}
		\bpsi_{i-1} - \bpsi_{i-2} =\bpsi_{i-1} - w^o + w^o - \bpsi_{i-2} = - \tpsi_{i-1} + \tpsi_{i-2}.
		\eeqn
		Then, expression \eqref{hb iteration s - 2} can be rewritten as
		\begin{align}
		\label{tw update}
		\tw_{i}\hspace{-0.5mm}=\hspace{-0.5mm}(\hspace{-0.3mm} I_M \hspace{-0.3mm}+\hspace{-0.3mm} \beta_2 I_M \hspace{-0.3mm}- \hspace{-0.3mm} \mu_m \H_{i\hspace{-0.4mm}-\hspace{-0.4mm}1})\tpsi_{i\hspace{-0.3mm}-\hspace{-0.3mm}1} \hspace{-0.3mm}-\hspace{-0.3mm}  \beta_2 \tpsi_{i\hspace{-0.3mm}-\hspace{-0.3mm}2} \hspace{-0.5mm}+\hspace{-0.5mm} \mu_m \s_i(\bpsi_{i\hspace{-0.3mm}-\hspace{-0.3mm}1}).
		\end{align}
		On the other hand, expression \eqref{sgfat-1} gives
		\eq{
			\tpsi_{i-1}=\tw_{i-1}+\beta_1 (\tw_{i-1}-\tw_{i-2}). \label{sgfat-1-tilde}
		}
		Substituting \eqref{sgfat-1-tilde} into \eqref{tw update}, we have
		\eq{
			\label{sgfat-tw-update}
			\tw_i=\J_{i-1} \tw_{i-1} + \K_{i-1} \tw_{i-2} + L \tw_{i-3} + \mu_m \s_i(\bpsi_{i-1}),
		}
		where boldface quantities denote random variables:
		\eq{
		\J_{i-1}&=(1+\beta_1)(1+\beta_2)I_M - \mu_m (1+\beta_1) \H_{i-1}\stackrel{\footnotesize (\ref{ass: gfat})}{=}(1+\beta)I_M - \mu_m (1+\beta_1) \H_{i-1} \\
		\K_{i-1}&=-(\beta_1+\beta_2+2\beta_1\beta_2)I_M + \mu_m \beta_1 \H_{i-1}=-\beta I_M + \mu_m \beta_1 \H_{i-1}\\
		L&=\beta_1 \beta_2=0
		}
		It follows that we can write the extended relation:
		\begin{align}
		\label{sta-eq:lms-ac-recursion}
		\left[                 
		\begin{array}{c}   
		\tw_{i}\\  
		\tw_{i-1}
		\end{array}
		\right]                
		=&
		\underbrace{\left[
			\begin{array}{cc}
			\J_{i-1}& \K_{i-1} \\
			I_M & 0\\
			\end{array}
			\right]}_{\define \B_{i-1}}
		\left[
		\begin{array}{c}
		\tw_{i-1}\\
		\tw_{i-2}\\
		\end{array}
		\right]  +
		\mu_m \left[
		\begin{array}{c}
		\s_i(\bpsi_{i-1})\\
		0  \end{array}
		\right].
		\end{align}
		where we are denoting the coefficient matrix by $\B_{i-1}$, which can be written as the difference
		\beqn
		\B_{i-1}\define P -\M_{i-1},
		\eeqn
		with
		\beqn
		P =
		\left[
		\hspace{-1mm}
		\begin{array}{cc}
			(1+\beta )I_M& \hspace{-2mm}-\beta  I_M\\
			I_M & \hspace{-2mm} 0\\
		\end{array}
		\hspace{-1mm}
		\right],   \quad
		\M_{i-1}=
		\left[
		\hspace{-1mm}
		\begin{array}{cc}
			\mu_m (1+\beta_1)\H_{i-1} & \hspace{-1mm}-\mu_m \beta_1 \H_{i-1}\\
			0 & \hspace{-1mm}0\\
		\end{array}
		\hspace{-1mm}
		\right]\hspace{-1mm}. \label{M_i-1}
		\eeqn
		The eigenvalue decomposition of $P $ can be easily seen to be given by $P =V D V ^{-1}$,
		where
		\begin{align}
		V =
		\left[
		\begin{array}{cc}
		I_M & -\beta I_M\\
		I_M & -I_M\\
		\end{array}
		\right],
		\quad V ^{-1}=\frac{1}{1-\beta}
		\left[
		\begin{array}{cc}
		I_M & -\beta I_M\\
		I_M & -I_M\\
		\end{array}
		\right],\quad
		D =
		\left[
		\begin{array}{cc}
		I_M & 0\\
		0 & \beta  I_M\\
		\end{array}
		\right].\label{D}
		\end{align}
		Therefore, we have
		\begin{align}
		\B_{i-1}&=V (D -V ^{-1}\M_{i-1} V )V ^{-1}=
		V  \left[
		\begin{array}{cc}
		I_M-\frac{\mu_m}{1-\beta}\H_{i-1} &\frac{\mu_m\beta^\prime }{1-\beta }\H_{i-1}\\
		-\frac{\mu_m}{1-\beta} \H_{i-1} & \beta  I_M+\frac{\mu_m\beta^\prime }{1-\beta }\H_{i-1}\\
		\end{array}
		\right]V ^{-1},
		\end{align}
		where
		\bqq \beta^\prime &\define&  \beta \beta_1 + \beta - \beta_1 = \beta \beta_1 + \beta_2.
		\eqq
		Multiplying both sides of \eqref{sta-eq:lms-ac-recursion} by $V^{-1}$ from the left and recalling definition
		(\ref{transform}), we obtain
		\begin{align}
		\label{app-sta-hb iteration 2}
		\left[                 
		\begin{array}{c}   
		\widehat{\w}_{i}\\  
		\cw_{i}\\  
		\end{array}
		\right]                
		=&
		\left[
		\begin{array}{cc}
		I_M-\frac{\mu_m}{1-\beta }\H_{i-1} &\frac{\mu_m\beta^\prime }{1-\beta}\H_{i-1}\\
		-\frac{\mu_m}{1-\beta} \H_{i-1} & \beta  I_M+\frac{\mu_m \beta^\prime}{1-\beta }\H_{i-1}\\
		\end{array}
		\right]\left[
		\begin{array}{c}
		\hw_{i-1}\\
		\cw_{i-1}\\
		\end{array}
		\right]
		+
		\frac{\mu_m}{1-\beta} \left[
		\begin{array}{c}
		\s_i(\bpsi_{i-1})\\
		\s_i(\bpsi_{i-1})\\
		\end{array}
		\right].
		\end{align}

		\section{Proof of Theorem \ref{thm-conv-cw}}
		\label{app:thm-2}
		
		From the first row of recursion \eqref{sta-hb iteration 2} we have
		\bqq
		\hw_i&=&\left(I_M-\frac{\mu_m \H_{i-1} }{1-\beta}\right)\hw_{i-1}+\frac{\mu_m\beta^\prime \H_{i-1}}{1-\beta}\cw_{i-1} +\frac{\mu_m}{1-\beta}\s_i(\bpsi_{i-1}).
		\label{sta-hw update}
		\eqq
		Let $t\in(0,1)$. Squaring both sides and taking expectations conditioned on $\filt_{i-1}$, and using Jensen's inequality, we obtain under Assumptions~\ref{ass: cost function}  and \ref{ass: noise}:
		\eq{
		&\ \ \ \  \bE[\|\hw_i\|^2|\filt_{i-1}] \nnb
		&=  \left\|\left(I_M-\frac{\mu_m}{1-\beta}\H_{i-1}\right)\hw_{i-1}+\frac{\mu_m \beta^\prime}{1-\beta}\H_{i-1}\cw_{i-1}\right\|^2 +\frac{\mu_m^2}{(1-\beta)^2}\bE[\|\s_i(\bpsi_{i-1})\|^2|\filt_{i-1}]  \nnb
		&\overset{\text{(a)}}{\le} 
		\left\|(1\hspace{-0.5mm}-\hspace{-0.5mm}t)\frac{1}{1\hspace{-0.5mm}-\hspace{-0.5mm}t}\left(I_M\hspace{-0.5mm}-\hspace{-0.5mm}\frac{\mu_m}{1\hspace{-0.5mm}-\hspace{-0.5mm}\beta}\H_{i\hspace{-0.5mm}-\hspace{-0.5mm}1}\right)\hw_{i\hspace{-0.5mm}-\hspace{-0.5mm}1}\hspace{-0.5mm}+\hspace{-0.5mm} t\frac{1}{t}\frac{\mu_m \beta^\prime}{1\hspace{-0.5mm}-\hspace{-0.5mm}\beta}\H_{i\hspace{-0.5mm}-\hspace{-0.5mm}1}\cw_{i\hspace{-0.5mm}-\hspace{-0.5mm}1}\right\|^2 \hspace{-0.5mm}+\hspace{-0.5mm} \frac{\mu_m^2}{(1\hspace{-0.5mm}-\hspace{-0.5mm}\beta)^2}(\gamma^2 \|\tpsi_{i\hspace{-0.5mm}-\hspace{-0.5mm}1}\|^2+\sigma_s^2)\nnb
		&{\le} \frac{1}{1-t}\left\|\left(I_M-\frac{\mu_m}{1-\beta}\H_{i-1}\right)\hw_{i-1}\right\|^2 \hspace{-0.8mm}+\hspace{-0.8mm} \frac{1}{t}\left\|\frac{\mu_m \beta^\prime}{1-\beta}\H_{i-1}\cw_{i-1}\right\|^2 \hspace{-0.5mm}+\hspace{-0.5mm} \frac{\mu_m^2}{(1-\beta)^2}(\gamma^2 \|\tpsi_{i-1}\|^2+\sigma_s^2) \nnb
		&\overset{\text{(b)}}{\le}  \frac{1}{1-t}\left(1-\frac{\mu_m \nu}{1-\beta}\right)^2 \|\hw_{i-1}\|^2 + \frac{1}{t}\frac{\mu_m^2\beta^{\prime2} \delta^2}{(1-\beta)^2}\|\cw_{i-1}\|^2+ \frac{\mu_m^2}{(1-\beta)^2}(\gamma^2 \|\tpsi_{i-1}\|^2+\sigma_s^2)\nnb
		&\overset{\text{(c)}}{=}  \left(1-\frac{\mu_m \nu}{1-\beta}\right) \|\hw_{i-1}\|^2 + \frac{\mu_m \beta^{\prime 2} \delta^2}{\nu(1-\beta)}\|\cw_{i-1}\|^2+ \frac{\mu_m^2}{(1-\beta)^2}(\gamma^2 \|\tpsi_{i-1}\|^2+\sigma_s^2).
		\label{sta-hw update-expe-filt}
		}
		where (a) holds because of equation \eqref{noise-2-order} in Assumption \eqref{ass: noise}, (b) holds because $\nu I \le \H_{i-1} \le \delta I $ under Assumption \eqref{ass: cost function}, and (c) holds because we selected $t=\frac{\mu_m\nu}{1-\beta}$. Taking expectation again, we remove the conditioning to find:
		\eq{
			\label{app-sta-hw-final-bound-2}
			\bE\|\hw_i\|^2 \le & \left(1\hspace{-0.5mm}-\hspace{-0.5mm}\frac{\mu_m \nu}{1\hspace{-0.5mm}-\hspace{-0.5mm}\beta}\right) \bE\|\hw_{i-1}\|^2 \hspace{-0.5mm}+\hspace{-0.5mm} \frac{\mu_m \beta^{\prime 2} \delta^2}{\nu(1\hspace{-0.5mm}-\hspace{-0.5mm}\beta)}\bE\|\cw_{i\hspace{-0.5mm}-\hspace{-0.5mm}1}\|^2 \hspace{-0.5mm}+\hspace{-0.5mm} \frac{\mu_m^2}{(1\hspace{-0.5mm}-\hspace{-0.5mm}\beta)^2}(\gamma^2 \bE\|\tpsi_{i\hspace{-0.5mm}-\hspace{-0.5mm}1}\|^2\hspace{-0.5mm}+\hspace{-0.5mm}\sigma_s^2).
		}
		Furthermore, squaring \eqref{sgfat-1-tilde} and using the inequality $\|a+b\|^2\leq 2\|a\|^2+2\|b\|^2$ we get
		\eq{
			\label{bound tpsi}
			\|\tpsi_{i-1}\|^2 &\le 2(1+\beta_1)^2 \|\tw_{i-1}\|^2 + 2\beta_1^2\|\tw_{i-2}\|^2 \le 2(1+\beta_1)^2 (\|\tw_{i-1}\|^2+\|\tw_{i-2}\|^2) \nnb
			&= 2(1+\beta_1)^2 \left\|
			\left[                 
			\begin{array}{c}   
				\tw_{i-1}\\  
				\tw_{i-2}\\  
			\end{array}
			\right]
			\right\|^2 = 2(1+\beta_1)^2\left\Vert
			V V^{-1}\left[                 
			\begin{array}{c}   
				\tw_{i-1}\\  
				\tw_{i-2}\\  
			\end{array}
			\right]
			\right\Vert^2 \nnb
			& \le 2(1+\beta_1)^2 \|V\|^2
			\left\Vert \left[                 
			\begin{array}{c}   
				\hw_{i-1}\\  
				\cw_{i-1}\\  
			\end{array}
			\right]
			\right\Vert^2. 
		}
		{\color{black}
		It is known that there exists some constant $d>0$ such that $\|V\|^2 \le d \|V\|_F^2 $. From expression \eqref{V and V_inv} for $V$ we have 
		$$ \|V\|_F^2 = 3 \|I_M\|^2_F + \beta^2 \|I_M\|^2_F \le 4 \|I_M\|^2_F = 4M. $$
		Let $v^2 \define 4 d M$, so that $\|V\|^2 \le v^2$.
	}
	Therefore, under expectation, we conclude that it also holds:
		\eq{
			\label{app-tpsi-bound}
			\bE\|\tpsi_{i-1}\|^2 \le 2(1+\beta_1)^2 v^2 (\bE\|\hw_{i-1}\|^2 + \bE\|\cw_{i-1}\|^2).
		}
		Substituting \eqref{app-tpsi-bound} into \eqref{app-sta-hw-final-bound-2}, we get
		\eq{
			\bE\|\hw_i\|^2 \leq&\ \Big(1-\frac{\mu_m \nu}{1-\beta} + \frac{ 2(1+\beta_1)^2 \gamma^2 v^2 }{(1-\beta)^2} \mu_m^2\Big) \bE\|\hw_{i-1}\|^2 + \frac{\mu_m^2\sigma_s^2}{(1-\beta)^2} \nnb
			& +  \Big( \frac{\mu_m \beta^{\prime 2} \delta^2}{\nu(1-\beta)} + \frac{ 2(1+\beta_1)^2\gamma^2 v^2}{(1-\beta)^2} \mu_m^2\Big)\bE\|\cw_{i-1}\|^2.
			\label{app-snag-ehwi bound}
		}
		Now, let us consider the second row of \eqref{sta-hb iteration 2}, namely,
		\eq{
			\label{second row}
			\cw_i=-\frac{\mu_m}{1-\beta} \H_{i-1} \hw_{i-1} & + \left(\beta I_M + \frac{\mu_m \beta^\prime}{1-\beta} \H_{i-1}\right)\cw_{i-1} + \frac{\mu_m}{1-\beta} \s_i(\bpsi_{i-1}).
		}
		As before, squaring and taking expectations of both sides, and using Jensen's inequality, we obtain under Assumptions~\ref{ass: cost function}  and \ref{ass: noise}:
		\bqq
		& & \hspace{-10mm} \bE\|\cw_i\|^2  \nnb
		&\leq&\ \bE\left\|\beta \cw_{i-1} + \left(\frac{\mu_m \beta^\prime \H_{i-1}}{1-\beta}\cw_{i-1} -\frac{\mu_m \H_{i-1}}{1-\beta} \hw_{i-1}\right) \right\|^2  + \frac{\mu_m^2}{(1-\beta)^2}(\gamma^2 \bE\|\tpsi_{i-1}\|^2 + \sigma_s^2)\nnb
		&\overset{\text{(a)}}{\le}& \beta \bE\|\cw_{i\hspace{-0.5mm}-\hspace{-0.5mm}1}\|^2 \hspace{-0.5mm}+\hspace{-0.5mm} \frac{1}{1\hspace{-0.5mm}-\hspace{-0.5mm}\beta}\bE\left\|\frac{\mu_m\beta^\prime \H_{i\hspace{-0.5mm}-\hspace{-0.5mm}1}}{1\hspace{-0.5mm}-\hspace{-0.5mm}\beta}\cw_{i\hspace{-0.5mm}-\hspace{-0.5mm}1}\hspace{-0.5mm}-\hspace{-0.5mm}\frac{\mu_m \H_{i\hspace{-0.5mm}-\hspace{-0.5mm}1}}{1\hspace{-0.5mm}-\hspace{-0.5mm}\beta} \hw_{i\hspace{-0.5mm}-\hspace{-0.5mm}1} \right\|^2 \hspace{-0.5mm}+\hspace{-0.5mm}  \frac{\mu_m^2}{(1\hspace{-0.5mm}-\hspace{-0.5mm}\beta)^2}(\gamma^2 \bE\|\tpsi_{i\hspace{-0.5mm}-\hspace{-0.5mm}1}\|^2 \hspace{-0.5mm}+\hspace{-0.5mm} \sigma_s^2)\nnb
		&\le&  \beta \bE\|\cw_{i\hspace{-0.5mm}-\hspace{-0.5mm}1}\|^2 \hspace{-0.5mm}+\hspace{-0.5mm} \frac{2\mu_m^2\beta^{\prime 2} \delta^2}{(1\hspace{-0.5mm}-\hspace{-0.5mm}\beta)^3} \bE\|\cw_{i\hspace{-0.5mm}-\hspace{-0.5mm}1}\|^2 \hspace{-0.5mm}+\hspace{-0.5mm} \frac{2\mu_m^2 \delta^2}{(1\hspace{-0.5mm}-\hspace{-0.5mm}\beta)^3} \bE\|\hw_{i\hspace{-0.5mm}-\hspace{-0.5mm}1}\|^2\hspace{-0.5mm}+\hspace{-0.5mm}    \frac{\mu_m^2}{(1\hspace{-0.5mm}-\hspace{-0.5mm}\beta)^2}(\gamma^2 \bE\|\tpsi_{i\hspace{-0.5mm}-\hspace{-0.5mm}1}\|^2 \hspace{-0.5mm}+\hspace{-0.5mm} \sigma_s^2)\nnb
		&=& \Big(\beta \hspace{-0.5mm}+\hspace{-0.5mm}  \frac{2\mu_m^2 \beta^{\prime 2} \delta^2}{(1\hspace{-0.5mm}-\hspace{-0.5mm}\beta)^3}  \Big)\bE\|\cw_{i\hspace{-0.5mm}-\hspace{-0.5mm}1}\|^2 \hspace{-0.5mm}+\hspace{-0.5mm} \frac{2\mu_m^2 \delta^2}{(1\hspace{-0.5mm}-\hspace{-0.5mm}\beta)^3} \bE\|\hw_{i\hspace{-0.5mm}-\hspace{-0.5mm}1}\|^2\hspace{-0.5mm}+\hspace{-0.5mm}  \frac{\mu_m^2}{(1\hspace{-0.5mm}-\hspace{-0.5mm}\beta)^2}(\gamma^2 \bE\|\tpsi_{i\hspace{-0.5mm}-\hspace{-0.5mm}1}\|^2 \hspace{-0.5mm}+\hspace{-0.5mm} \sigma_s^2),
		\label{sta-cwi-bound-1}
		\eqq
		where (a) holds since $\bE\|\beta \x + \y\|^2=\bE\|\beta \x + (1-\beta)\frac{1}{1-\beta}\y\|^2\le \beta \bE\|\x\|^2 + \frac{1}{1-\beta} \bE\|\y \|^2$.
		Substituting \eqref{bound tpsi} into (\ref{sta-cwi-bound-1}), it follows that:
		\eq{
			\label{sta-cwi-final-bound}
			\bE\|\cw_i\|^2 \le&  \Big(\beta +  \frac{2\mu_m^2 \beta^{\prime 2} \delta^2}{(1-\beta)^3} +
			\frac{2 \mu_m^2 \gamma^2 (1+\beta_1)^2v^2}{(1-\beta)^2}  \Big)\bE\|\cw_{i-1}\|^2  + \frac{\mu_m^2 \sigma_s^2}{(1-\beta)^2} \nnb
			& +  \Big( \frac{2\mu_m^2 \delta^2}{(1-\beta)^3} + \frac{2\mu_m^2 \gamma^2 (1+\beta_1)^2v^2}{(1-\beta)^2} \Big) \bE\|\hw_{i-1}\|^2 
		}
		Combining relations \eqref{app-snag-ehwi bound} and \eqref{sta-cwi-final-bound} leads to
		the desired result \eqref{thm: sta-hw-cw-bound}--\eqref{f u2}. Let us now examine the stability of the
		$2\times 2$ coefficient matrix:
		\eq{
			\Gamma\define  \left[                 
			\begin{array}{cc}   
				a&b\\  
				c&d\\  
			\end{array}
			\right],
		}
		where
			\eq{\label{lkj;lkji}
		a &= 1-\frac{\mu_m \nu}{1-\beta} + \frac{ 2(1+\beta_1)^2 \gamma^2 v^2 }{(1-\beta)^2} \mu_m^2, \quad b = \frac{\mu_m \beta^{\prime 2} \delta^2}{\nu(1-\beta)} + \frac{ 2(1+\beta_1)^2\gamma^2 v^2}{(1-\beta)^2} \mu_m^2, \nnb
		c &= \frac{2\mu_m^2 \delta^2}{(1-\beta)^3} + \frac{2\mu_m^2 \gamma^2 (1+\beta_1)^2v^2}{(1-\beta)^2}, \quad \ d = \beta +  \frac{2\mu_m^2 \beta^{\prime 2} \delta^2}{(1-\beta)^3} +
		\frac{2 \mu_m^2 \gamma^2 (1+\beta_1)^2v^2}{(1-\beta)^2}.
	}
		When $\mu_m$ is sufficiently small, $a,b,c,d$ are all positive. 
		Since the spectral radius of a matrix is upper bounded by its $1$-norm, we have that
		\eq{
			\label{spectrum radius}
			\rho(\Gamma) \le \max\left\{a+c,\ b+d\right\}.
		}
		From \eqref{lkj;lkji}, we further have
		\eq{\label{uy2389}
		a + c & \le 1-\frac{\mu_m \nu}{1-\beta} + \frac{ 2(1+\beta_1)^2 \gamma^2 v^2 }{(1-\beta)^2} \mu_m^2 + \frac{2\mu_m^2 \delta^2}{(1-\beta)^3} + \frac{2\mu_m^2 \gamma^2 (1+\beta_1)^2v^2}{(1-\beta)^2} \nnb
		& = 1-\frac{\mu_m \nu}{1-\beta} + \frac{4(1-\beta)(1+\beta_1)^2\gamma^2\nu^2 + 2\delta^2}{(1-\beta)^3} \mu_m^2 \nnb
		& \le 1-\frac{\mu_m \nu}{1-\beta} + \frac{16\gamma^2\nu^2 + 2\delta^2}{(1-\beta)^3} \mu_m^2,
		}
		where the last inequality holds because $1-\beta < 1$ and $1+\beta_1 < 2$. Similarly, we also have
		\eq{\label{weyu89}
		b + d \le \beta + \frac{\delta^2 \mu_m}{\nu(1-\beta)} + \frac{16\gamma^2\nu^2 + 2\delta^2}{(1-\beta)^3} \mu_m^2.
		}
		Combining \eqref{spectrum radius}--\eqref{weyu89}, we reach
		\eq{\label{kljyweu}
		\rho(\Gamma) \le \max\left\{1-\frac{\mu_m \nu}{1-\beta} + \frac{16\gamma^2\nu^2 + 2\delta^2}{(1-\beta)^3} \mu_m^2, \ \ \beta + \frac{\delta^2 \mu_m}{\nu(1-\beta)} + \frac{16\gamma^2\nu^2 + 2\delta^2}{(1-\beta)^3} \mu_m^2\right\}.
		}
		If the step-size $\mu_m$ is small enough to satisfy the following conditions
		\eq{\label{stability condition}
		\begin{cases}
			\begin{array}{l}
				\frac{\mu_m \nu}{2(1-\beta)}  >  \frac{16\gamma^2\nu^2 + 2\delta^2}{(1-\beta)^3} \mu_m^2, \\
				\frac{\delta^2 \mu_m}{\nu(1-\beta)} > \frac{16\gamma^2\nu^2 + 2\delta^2}{(1-\beta)^3} \mu_m^2, \\
				1 - \beta > \frac{2\delta^2 \mu_m}{\nu(1-\beta)},
			\end{array}
		\end{cases}
		}
		which is also equivalent to 
		\eq{\label{23789adhjj}
		\mu_m < \min\left\{ \frac{(1-\beta)^2 \nu}{32\gamma^2\nu^2 + 4\delta^2},\frac{(1-\beta)^2 \delta^2}{16\gamma^2\nu^3 + 2\delta^2\nu}, \frac{\nu(1-\beta)^2}{2\delta^2} \right\} = \frac{(1-\beta)^2 \nu}{32\gamma^2\nu^2 + 4\delta^2},
		}
		then it holds that 
		\eq{\label{uwehajcm}
		\rho(\Gamma) < \max\left\{1-\frac{\mu_m \nu}{2(1-\beta)}, \ \ \beta + \frac{2\delta^2 \mu_m}{\nu(1-\beta)}\right\} \le 1,
		}
		in which case $\Gamma$ will be a stable matrix. 
		
		{\color{black}
		When $\Gamma$ is stable, it then follows from \eqref{thm: sta-hw-cw-bound} that
			\eq{ \label{2487asdhta}
				\limsup_{i\rightarrow \infty}
				\left[  \hspace{-0.5mm}               
				\begin{array}{c}   
					\bE\|\hw_{i}\|^2\\  
					\bE\|\cw_{i}\|^2\\  
				\end{array}
				\hspace{-0.5mm}\right] &\le (I_{2}-\Gamma)^{-1} \left[  \hspace{-0.5mm}               
				\begin{array}{c}   
					e\\  
					f\\  
				\end{array}
				\hspace{-0.5mm}\right]. 
			}	
		Notice that 
		\eq{\label{2348ahsdg}
		(I_2 - \Gamma)^{-1} & = 
		\ba{cc}
		1-a & -b \\
		-c & 1-d
		\ea^{-1}  = 
		\frac{1}{(1-a)(1-d)-bc}
		\ba{cc}
		1 -d & b \\
		c & 1-a
		\ea \nnb
		& \overset{\eqref{lkj;lkji}}{=} \frac{1}{\mu_m \nu + p_1 \mu_m^2 + p_2 \mu_m^3 + p_3 \mu_m^4} 
		\ba{cc}
		1-\beta + p_4 \mu_m^2 & \frac{\mu_m \beta^{\prime2} \delta^2}{\nu(1-\beta)} + p_5 \mu_m^2 \\
		\frac{2\mu_m^2 \delta^2}{(1-\beta)^3} + \frac{2\mu_m^2 \gamma^2 (1+\beta_1)^2v^2}{(1-\beta)^2} & \frac{\mu_m \nu}{1-\beta} + p_6 \mu_m^2
		\ea
		}
		where 
		\eq{\label{pk}
		&p_1 \define -\frac{2(1+\beta_1)^2\gamma^2\nu^2}{1-\beta}<0, \quad p_4\define - \frac{2 \beta^{\prime 2} \delta^2}{(1-\beta)^3} -
		\frac{2 \gamma^2 (1+\beta_1)^2v^2}{(1-\beta)^2} < 0, \nnb
		&p_5 \define \frac{2(1+\beta_1)^2\gamma^2\nu^2}{(1-\beta)^2} > 0, \hspace{0.7cm} p_6 \define - \frac{2(1+\beta_1)^2\gamma^2\nu^2}{(1-\beta)^2}<0.
		}
		For simplicity, we omit the expression of $p_2$ and $p_3$ here. Notice that
		\eq{\label{23678}
		\mu_m \nu + p_1 \mu_m^2 + p_2 \mu_m^3 + p_3 \mu_m^4 = \frac{\mu_m \nu}{2} + \left( \frac{\mu_m \nu}{2} + p_1 \mu_m^2 + p_2 \mu_m^3 + p_3 \mu_m^4 \right).
		}
		Although $p_1<0$, it still holds that $\frac{\mu_m \nu}{2} + p_1 \mu_m^2 + p_2 \mu_m^3 + p_3 \mu_m^4 > 0$  when $\mu_m$ is sufficiently small, which implies that 
		\eq{\label{mc}
		\mu_m \nu + p_1 \mu_m^2 + p_2 \mu_m^3 + p_3 \mu_m^4 > \frac{\mu_m \nu}{2}.
		} 
		Similarly, it holds that
		\eq{\label{b3}
		\frac{\mu_m \beta^\prime \delta^2}{\nu(1-\beta)} + p_5 \mu_m^2 = \frac{2\mu_m \beta^\prime \delta^2}{\nu(1-\beta)} - \left( \frac{\mu_m \beta^\prime \delta^2}{\nu(1-\beta)} - p_5 \mu_m^2 \right) \le \frac{2\mu_m \beta^\prime \delta^2}{\nu(1-\beta)},
		}
		where the last inequality holds because $\frac{2\mu_m \beta^\prime \delta^2}{\nu(1-\beta)} - p_5 \mu_m^2 > 0$ for sufficiently small step-size. Furthermore, since $p_4<0$ and $p_6<0$, we also have
		\eq{\label{3b}
		1-\beta + p_4 \mu_m^2 < 1-\beta, \quad \frac{\mu_m \nu}{1-\beta} + p_6 \mu_m^2 < \frac{\mu_m \nu}{1-\beta}.
		}
		Substitute \eqref{mc}, \eqref{b3} and \eqref{3b} into \eqref{2348ahsdg}, we have
		\eq{\label{yqu}
		(I_2 - \Gamma)^{-1} \le 
		\ba{cc}
				\frac{2(1-\beta)}{\mu_m \nu} & \frac{4\beta^{\prime 2}\delta^2 }{\nu^2 (1-\beta)} \\
				\frac{4\mu_m \delta^2}{(1-\beta)^3 \nu} + \frac{4\mu_m \gamma^2 (1+\beta_1)^2v^2}{(1-\beta)^2 \nu} & \frac{2}{1-\beta} 
				\ea 
		}
		Combining \eqref{2487asdhta} and \eqref{yqu}, we have
		\eq{\label{8o2137ahlc}
			\limsup_{i\rightarrow \infty}
			\left[  \hspace{-0.5mm}               
			\begin{array}{c}   
				\bE\|\hw_{i}\|^2\\  
				\bE\|\cw_{i}\|^2\\  
			\end{array}
			\hspace{-0.5mm}\right] &\le (I_{2}-\Gamma)^{-1} \left[  \hspace{-0.5mm}               
			\begin{array}{c}   
				e\\  
				f\\  
			\end{array}
			\hspace{-0.5mm}\right] \nnb
			&\le \ba{cc}
			\frac{2(1-\beta)}{\mu_m \nu} & \frac{4\beta^{\prime 2}\delta^2 }{\nu^2 (1-\beta)} \\
			\frac{4\mu_m \delta^2}{(1-\beta)^3 \nu} + \frac{4\mu_m \gamma^2 (1+\beta_1)^2v^2}{(1-\beta)^2 \nu} & \frac{2}{1-\beta} 
			\ea 
			\ba{c}
			\frac{\mu_m^2 \sigma_s^2}{(1-\beta)^2} \\
			\frac{\mu_m^2 \sigma_s^2}{(1-\beta)^2}
			\ea \nnb
			&=
			\ba{c}
			\frac{2\mu_m \sigma_s^2}{(1-\beta) \nu} + \frac{4\beta^{\prime 2}\delta^2 \sigma_s^2 \mu_m^2}{(1-\beta)^3\nu^2} \\
			\frac{2\mu_m^2 \sigma_s^2}{(1-\beta)^3} + \frac{4\mu_m^3 \delta^2\sigma_s^2}{(1-\beta)^5 \nu} + \frac{4\mu_m^3 \gamma^2 (1+\beta_1)^2v^2 \sigma_s^2}{(1-\beta)^4 \nu}
			\ea \le
			\ba{c}
			\frac{3\mu_m \sigma_s^2}{(1-\beta)} \\
			\frac{3\mu_m^2 \sigma_s^2}{(1-\beta)^3}
			\ea
		}
		where in the last inequality we choose sufficiently small $\mu_m$ such that
		\eq{
			\frac{4\beta^{\prime 2}\delta^2 \sigma_s^2 \mu_m^2}{(1-\beta)^3\nu^2} < \frac{\mu_m \sigma_s^2}{(1-\beta) \nu}, \quad \frac{4\mu_m^3 \delta^2\sigma_s^2}{(1-\beta)^5 \nu} + \frac{4\mu_m^3 \gamma^2 (1+\beta_1)^2v^2 \sigma_s^2}{(1-\beta)^4 \nu} < \frac{\mu_m^2 \sigma_s^2}{(1-\beta)^3}
		}
		Therefore, we have the following result
		\eq{
			\label{sta-hb-steady-state-2}
			\limsup_{i\rightarrow \infty} \bE\|\hw_i\|^2 = O\left(\frac{\mu_m \sigma_s^2}{(1-\beta) \nu} \right), \quad
			\limsup_{i\rightarrow \infty} \bE\|\cw_i\|^2 = O\left(\frac{\mu_m^2 \sigma_s^2}{(1-\beta)^3} \right).
		}
		and
		\eq{\label{thm2-bound-tw}
			\limsup_{i\rightarrow \infty}
			\bE
			\left\Vert
			\left[
			\begin{array}{c}
				\tw_{i} \\
				\tw_{i-1}
			\end{array}
			\right]
			\right\Vert^2
			&=
			\limsup_{i\rightarrow \infty}
			\bE \left\Vert
			V \left[
			\begin{array}{c}
				\hw_{i} \\
				\cw_{i}
			\end{array}
			\right]
			\right\Vert^2 \nnb
			&\le
			v^2 \left(\hspace{-1mm} \limsup_{i\rightarrow \infty} \bE
			\left\Vert
			\left[
			\begin{array}{c}
				\hw_{i} \\
				\cw_{i}
			\end{array}
			\right]
			\right\Vert^2 \hspace{-0.5mm} \right) \nnb
			&= v^2\left(\hspace{-1mm} \limsup_{i\rightarrow \infty}( \bE \|\hw_i\|^2 \hspace{-1mm} + \hspace{-0.5mm} \bE \|\cw_i\|^2 ) \hspace{-1mm} \right) = O\left(\frac{\mu_m \sigma_s^2}{(1-\beta) \nu} \right),
		}
		from which we conclude that (\ref{thm-conv-tw}) holds.
		
	}
		
%

		\section{Proof of Corollary \ref{co:hb-cw-u2}}
		\label{app-cor-2-ub}
		To simplify the notation, we refer to \eqref{thm: sta-hw-cw-bound} and introduce the
		quantities:
		\eq{
			z_i=
			\left[                 
			\begin{array}{c}   
				\bE\|\hw_{i}\|^2\\  
				\bE\|\cw_{i}\|^2\\  
			\end{array}
			\right], \
			\Gamma=
			\left[                 
			\begin{array}{cc}   
				a&b\\  
				c&d\\  
			\end{array}
			\right], \
			r=
			\left[                 
			\begin{array}{c}   
				e\\  
				f\\  
			\end{array}
			\right].
		}
		Then, relation \eqref{thm: sta-hw-cw-bound} can be rewritten as
		\eq{
			z_i\preceq \Gamma z_{i-1}+r.
		}
		It follows that, in terms of the $1-$norm,
		\eq{
			\label{zi inequality}
			\|z_i\|_1 \le \|\Gamma\|_1 \|z_{i-1}\|_1 + \|r\|_1,
		}
		where
		\eq{
			\|\Gamma\|_1=\max\left\{1-\frac{\mu_m \nu}{1-\beta}+B_1 \mu_m^2, \beta + B_2 \mu_m\right\}
		}
		for some constant $B_1$ and $B_2$. Now we can choose $\mu_m$ sufficiently small to satisfy
		\eq{
			 B_1 \mu_m^2 < \frac{ \nu \mu_m}{2(1-\beta)},\quad \quad  \left(B_2+\frac{\nu}{2(1-\beta)}\right)\mu_m < 1-\beta,
		}
		which implies that
		\eq{
			\|\Gamma\|_1 & \le 1-\frac{\mu_m \nu}{1-\beta}+B_1 \mu_m^2  \le 1-\frac{\mu_m \nu}{2(1-\beta)} \define \rho_1 <1 \label{23hasts}
		}
		%
		Then, from \eqref{zi inequality} we have
		\eq{
			\label{zi inequality-2}
			\|z_i\|_1 \le \rho_1 \|z_{i-1}\|_1 + \|r\|_1.
		}
		Iterating \eqref{zi inequality-2} gives
		{\color{black}
			\eq{
				\label{zi inequality-3}
				\|z_i\|_1 \le \rho_1^{i+1} \|z_{-1}\|_1 + \frac{\|r\|_1}{1-\rho_1}.
			}
			Recall the expressions of $e$ and $f$ from \eqref{f u2}, we have $\|r\|_1 \le \frac{B_3 \mu_m^2 \sigma_s^2}{(1-\beta)^2}$ for some constant $B_3$. Since $1-\rho_1= \frac{\mu_m \nu}{2(1-\beta)}$, we get $\|r\|_1/(1-\rho_1) \le \frac{2 B_3 \mu_m \sigma_s^2 }{(1-\beta)\nu}$. From \eqref{zi inequality-3}, we have
			\eq{
				\label{zi inequality-3asdht}
				\|z_i\|_1 \le \rho_1^{i+1} \|z_{-1}\|_1 +  \frac{2 B_3 \mu_m \sigma_s^2 }{(1-\beta)\nu}.
			}
			Accordingly, using
			\be \|z_i\|_1 =  \bE\|\hw_i\|^2 + \bE\|\cw_i\|^2 \ee
			we also find that
			\eq{\label{2348adsgh}
				\bE\|\hw_i\|^2 \le \rho_1^{i+1} \|z_{-1}\|_1 +  \frac{2 B_3 \mu_m \sigma_s^2 }{(1-\beta)\nu}.
			}
			On the other hand,
			we know from the second row of (\ref{thm: sta-hw-cw-bound}) that
			\eq{
				\label{rl-cw-bound}
				\bE\|\cw_i\|^2 \le  \big(\beta +  c_1 \mu_m^2  \big)\bE\|\cw_{i-1}\|^2  +  c_2 \mu_m^2 \bE\|\hw_{i-1}\|^2 + c_3 \mu_m^2
			}
		}{\color{black}for constants
			\eq{\label{238a000}
			c_1 \define \frac{2 \beta^{\prime 2} \delta^2 }{(1-\beta)^3} + \frac{2\gamma^2(1+\beta_1)^2 v^2}{(1-\beta)^2}, \quad c_2 \define \frac{2\delta^2}{(1-\beta)^3} + \frac{2\gamma^2(1+
				\beta)^2 v^2}{(1-\beta)^2}, \quad c_3 \define \frac{\sigma_s^2}{(1-\beta)^2}. 
			}
			To simplify the notation, with the facts that $\beta^{\prime} < 1, \beta<1$ and $\beta_1 < 1$, we have
			\eq{\label{2389adshj}
			c_1 \le \frac{B_4 (\delta^2+\gamma^2)}{(1-\beta)^3}\define c_4, \quad c_2 \le \frac{B_4(\delta^2 + \gamma^2)}{(1-\beta)^3} = c_4
			}
			for some constant $B_4$. Substituting \eqref{2389adshj} into \eqref{rl-cw-bound} we get
			\eq{
				\label{rl-cw-bound-2}
				\bE\|\cw_i\|^2 \le  \big(\beta +  c_4 \mu_m^2  \big)\bE\|\cw_{i-1}\|^2  +  c_4 \mu_m^2 \bE\|\hw_{i-1}\|^2 + c_3 \mu_m^2.
			}
			Now we substitute \eqref{2348adsgh} into \eqref{rl-cw-bound-2}, and reach
			\eq{
				\label{rl-cw-bound-asdluh}
				\bE\|\cw_i\|^2 \le  \big(\beta +  c_4 \mu_m^2  \big)\bE\|\cw_{i-1}\|^2  +  c_4 \rho_1^{i}\|z_{-1}\|_1 \mu_m^2 + \frac{2 B_3 c_4\sigma_s^2}{(1-\beta)\nu}\mu_m^3 + c_3 \mu_m^2.
			}
			When $\mu_m$ is sufficiently small such that
			\eq{\label{238dsahg5} \frac{2 B_3 c_4\sigma_s^2}{(1-\beta)\nu}\mu_m^3 \le   c_3 \mu_m^2,
			}
			\eqref{rl-cw-bound-asdluh} becomes 
			\eq{
				\label{rl-cw-bound-asdluh2}
				\bE\|\cw_i\|^2 \le  \big(\beta +  c_4 \mu_m^2  \big)\bE\|\cw_{i-1}\|^2  +  c_4 \rho_1^{i}\|z_{-1}\|_1 \mu_m^2  + 2 c_3 \mu_m^2.
			}
		}{Notice that 
		\bqq
		\beta + c_4 \mu_m^2 = 1 - (1 - \beta) + c_4 \mu_m^2 = 1-\frac{1-\beta}{2} + \left(c_1 \mu_m^2-\frac{1-\beta}{2}\right). \label{rho epsilon}
		\eqq
		It is clear that we can choose a sufficiently small $\mu_m$ for the last term between brackets to become negative, in which case
		\be
		\beta + c_4 \mu_m^2  \leq 1-\frac{1-\beta}{2} = \frac{1+\beta}{2} \define \alpha < 1
		\ee
		It follows that
		{\color{black}
		\eq{
			\bE\|\cw_i\|^2
			& \le \ \alpha\: \bE\|\cw_{i-1}\|^2  +  \left(c_4 \|z_{-1}\|_1 \rho_1^{i}\right) \mu_m^2 + 2 c_3 \mu_m^2 \nnb
			& \le \ \alpha^{i+1} \bE\|\cw_{-1}\|^2  +  c_4 \|z_{-1}\|_1 \mu_m^2 \rho_1^{i} \sum_{s=0}^{i}\left(\frac{\alpha}{\rho_1}\right)^s + \frac{2c_3 \mu_m^2}{1-\alpha}.  \label{rl-cw-bound-1}
		}
		Recall that $\rho_1 = 1 - \frac{\mu_m \nu}{2(1-\beta)}$ and $\alpha = 1 - (1-\beta)/2$. Therefore, it holds that $\alpha/\rho_1 < 1$ for sufficiently small $\mu_m$. As a result, we have
		\eq{\label{238asdg}
			\sum_{s=0}^{i}\left(\frac{\alpha}{\rho_1}\right)^s \le \frac{1}{1- \frac{\alpha}{\rho_1}} = \frac{\rho_1}{\rho_1 - \alpha} = \frac{2(1-\beta) - \mu_m \nu}{(1-\beta)^2 - \mu_m \nu} \le \frac{B_5}{1-\beta}
		}
		for some constant $B_5$ when $\mu_m$ is sufficiently small.
		Substituting \eqref{238asdg} into \eqref{rl-cw-bound-1}, we get 
		\eq{
			\bE\|\cw_i\|^2
			& \le \ \alpha^{i+1} \bE\|\cw_{-1}\|^2  +  \frac{B_5 c_4 \|z_{-1}\|_1  \rho_1^{i}}{1-\beta} \mu_m^2  + \frac{4 c_3 \mu_m^2}{1-\beta}.  \label{rl-cw-bound-1-asduy}
		}
	}}To assess the term that depends on the initial state, $\bE\|\cw_{-1}\|^2$, let us consider
		the boundary conditions \eqref{sgfat-3}--\eqref{sgfat-4}, 
		%
		Then, from \eqref{transform} it holds that
		\eq{ \cw_{-1}&=\frac{\tw_{-1}-\tw_{-2}}{1-\beta}=\frac{\w_{-2}-\w_{-1}}{1-\beta} = \frac{\mu_m \grad_w Q(\w_{-2};\btheta_{-1})}{1-\beta}\label{ini-state-142} }
		so that $\bE\|\cw_{-1}\|^2 = c_5 \mu_m^2$, where 
		\eq{c_5\define \bE\|\grad_w Q(\w_{-2};\btheta_{-1})\|^2/(1-\beta)^2.}
		{\color{black}
		Substituting
		this conclusion into \eqref{rl-cw-bound-1-asduy}, and recalling the expression of $c_3$, $c_4$ and $c_5$, we arrive at 
		\eq{
		\bE\|\cw_i\|^2
		& \le \ \frac{B_6 \alpha^{i+1} \mu_m^2}{(1-\beta)^2}  +  \frac{B_7 (\delta^2+\gamma^2)  \rho_1^{i}}{(1-\beta)^4} \mu_m^2  + \frac{B_8\mu_m^2\sigma_s^2}{(1-\beta)^3} \nnb
		& \overset{(a)}{\le} \ \frac{B_6 \rho_1^{i+1} \mu_m^2}{(1-\beta)^2}  +  \frac{B_9 (\delta^2+\gamma^2)  \rho_1^{i+1}}{(1-\beta)^4} \mu_m^2  + \frac{B_8\mu_m^2\sigma_s^2}{(1-\beta)^3} \nnb
		& \overset{(b)}{\le} \ \frac{B_{10}(\delta^2 + \gamma^2)\rho_1^{i+1}}{(1-\beta)^4}\mu_m^2 + \frac{B_8\mu_m^2\sigma_s^2}{(1-\beta)^3},
		  \label{rl-cw-bound-1-asduy-23789}
		}
		where (a) holds because $\alpha \le \rho_1$ when $\mu_m$ is sufficiently small, and there must exist some constant $B_9$ such that $B_7/\rho_1 < B_9$; (b) holds because there must exist some constant $B_{10}$ such that
		\eq{\label{jlui890}
		B_6(1-\beta)^2 + B_9 (\delta^2+\gamma^2) \le B_{10} (\delta^2+\gamma^2).
		} 
	}

		
		{\color{black}
		\section{Proof of Theorem \ref{lm:gfat-conv-4-4}}
		\label{app:4-th order moment}
		The argument below is motivated by the derivation of Theorem 9.2 in (Sayed, 2014a). Here, however, we extend the arguments and expand the details in order to clearly identify the constants inside the $O(\mu)$ notation, which was not necessary in (Sayed, 2014a). The derivation becomes more demanding, as the arguments show. 
		
		From the first row of recursion \eqref{sta-hb iteration 2} we have
		\bqq
		\hw_i&=&\left(I_M-\frac{\mu_m \H_{i-1} }{1-\beta}\right)\hw_{i-1}+\frac{\mu_m\beta^\prime \H_{i-1}}{1-\beta}\cw_{i-1} +\frac{\mu_m}{1-\beta}\s_i(\bpsi_{i-1}).
		\label{sta-hw update-2s}
		\eqq
		Now applying the following inequality, for any two vectors $\{a,b\}$:
		\eq{
			\|a+b\|^4\le \|a\|^4+3\|b\|^4+8\|a\|^2\|b\|^2+4\|a\|^2(a\tran b) \label{a+b 4 bound-asdluy}
		}
		we get
		\eq{\label{238asdg23}
		&\ \bE[\|\hw_i\|^4|\filt_{i-1}] \nnb
		&\ = \left\| \left(I_M-\frac{\mu_m \H_{i-1} }{1-\beta}\right)\hw_{i-1}+\frac{\mu_m\beta^\prime \H_{i-1}}{1-\beta}\cw_{i-1} \right\|^4 + \frac{3\mu_m^4}{(1-\beta)^4}\bE[\|\s_i(\bpsi_{i-1})\|^4|\filt_{i-1}] \nnb
		&\ \hspace{5mm} + \frac{8\mu_m^2}{(1-\beta)^2} \left\| \left(I_M-\frac{\mu_m \H_{i-1} }{1-\beta}\right)\hw_{i-1}+\frac{\mu_m\beta^\prime \H_{i-1}}{1-\beta}\cw_{i-1} \right\|^2 \bE[\|\s_i(\bpsi_{i-1})\|^2|\filt_{i-1}] \nnb
		&\ \le \left\| \left(I_M-\frac{\mu_m \H_{i-1} }{1-\beta}\right)\hw_{i-1}+\frac{\mu_m\beta^\prime \H_{i-1}}{1-\beta}\cw_{i-1} \right\|^4 + \frac{3\mu_m^4(\gamma_4^4\|\tpsi_{i-1}\|^4 + \sigma_{s,4}^4)  }{(1-\beta)^4} \nnb
		&\ \hspace{5mm} + \frac{8\mu_m^2}{(1-\beta)^2} \left\| \left(I_M-\frac{\mu_m \H_{i-1} }{1-\beta}\right)\hw_{i-1}+\frac{\mu_m\beta^\prime \H_{i-1}}{1-\beta}\cw_{i-1} \right\|^2 (\gamma^2\|\tpsi_{i-1}\|^2 + \sigma_s^2).
		}
		We next bound each of the terms that appear on the right-hand side.  Using Jensen's inequality, the
		lower and upper bounds on the Hessian matrix from (\ref{cost function assumption}), we have
		\eq{\label{239asdhjn}
		&\ \left\| \left(I_M-\frac{\mu_m \H_{i-1} }{1-\beta}\right)\hw_{i-1}+\frac{\mu_m\beta^\prime \H_{i-1}}{1-\beta}\cw_{i-1} \right\|^4 \nnb
		=&\ \left\| (1-t) \frac{1}{1-t} \left(I_M-\frac{\mu_m \H_{i-1} }{1-\beta}\right)\hw_{i-1}+ t \frac{1}{t}\frac{\mu_m\beta^\prime \H_{i-1}}{1-\beta}\cw_{i-1} \right\|^4 \nnb
		\le&\ \frac{1}{(1-t)^3} \left( 1 - \frac{\mu_m \nu}{1-\beta} \right)^4 \|\hw_{i-1}\|^4 + \frac{1}{t^3} \frac{\mu_m^4 \beta^{\prime 4}\delta^4}{(1-\beta)^4} \|\cw_{i-1}\|^4 \nnb
		\overset{(a)}{=}&\ \left( 1 - \frac{\mu_m \nu}{1-\beta} \right) \|\hw_{i-1}\|^4 + \frac{\mu_m \beta^{\prime 4}\delta^4}{(1-\beta)\nu^3}\|\cw_{i-1}\|^4 \nnb
		=&\ (1-q_1 \mu_m) \|\hw_{i-1}\|^4 + q_2 \mu_m \|\cw_{i-1}\|^4.
		}
		where (a) holds because we set $t = \mu_m \nu/(1-\beta)$, and $q_1$, $q_2$ are defined as
		\eq{\label{238adshjg}
		q_1 \define \frac{\nu}{1-\beta},\quad \quad q_2 \define \frac{\beta^{\prime 4}\delta^4}{(1-\beta)\nu^3}.
		}
		Next we check the terms $\bE[\|\s_i(\bpsi_{i-1})\|^2|\filt_{i-1}]$ and $\bE[\|\s_i(\bpsi_{i-1})\|^4|\filt_{i-1}]$. From \eqref{bound tpsi} we have
		\bqq
		\|\tpsi_{i-1}\|^2&\le B_1 (\|\hw_{i-1}\|^2 + \|\cw_{i-1}\|^2), \label{subbound-3-klklly3}
		\eqq
		where $B_1=2(1+\beta_1)^2 v^2$, which also implies that
		\bqq
		\|\tpsi_{i-1}\|^4 &\le & B_1^2 (\|\hw_{i-1}\|^2 + \|\cw_{i-1}\|^2)^2 \nnb
		&\le & 2 B_1^2 (\|\hw_{i-1}\|^4 + \|\cw_{i-1}\|^4) \ =\  B_2 (\|\hw_{i-1}\|^4 + \|\cw_{i-1}\|^4), \label{subbound-4-lklhyklu2}
		\eqq
		where $B_2=2 B_1^2$. Furthermore, recall in \eqref{sta-hw update-expe-filt} that 
		\eq{\label{238asdh5723}
		&\ \left\| \left(I_M-\frac{\mu_m \H_{i-1} }{1-\beta}\right)\hw_{i-1}+\frac{\mu_m\beta^\prime \H_{i-1}}{1-\beta}\cw_{i-1} \right\|^2 \nnb
		\le&\ \left(1-\frac{\mu_m \nu}{1-\beta}\right) \|\hw_{i-1}\|^2 + \frac{\mu_m \beta^{\prime 2} \delta^2}{\nu(1-\beta)}\|\cw_{i-1}\|^2 \nnb
		=&\ (1-q_1 \mu_m) \|\hw_{i-1}\|^2 + q_3 \mu_m \|\cw_{i-1}\|^2,
		}
		where we define 
		\eq{\label{238da}
		q_3 \define \frac{\beta^{\prime 2}\delta^2}{\nu(1-\beta)}.
		}
		Now substituting \eqref{239asdhjn}, \eqref{subbound-3-klklly3}, \eqref{subbound-4-lklhyklu2} and \eqref{238asdh5723} into \eqref{238asdg23}, we get
		\eq{\label{239asj5}
		&\ \bE[\|\hw_i\|^4|\filt_{i-1}] \nnb
		\le&\ (1-q_1 \mu_m) \|\hw_{i-1}\|^4 + q_2 \mu_m \|\cw_{i-1}\|^4 + \frac{3B_2 \gamma_4^4 \mu_m^4 }{(1-\beta)^4}\left( \|\hw_{i-1}\|^4 + \|\cw_{i-1}\|^4 \right) + \frac{3\sigma_{s,4}^4 \mu_m^4}{(1-\beta)^4}\nnb
		&\quad + \frac{8\mu_m^2}{(1-\beta)^2}\left[ (1-q_1 \mu_m) \|\hw_{i-1}\|^2 + q_3 \mu_m \|\cw_{i-1}\|^2 \right] \left[ \gamma^2 B_1 \left( \|\hw_{i-1}\|^2 + \|\cw_{i-1}\|^2 \right) + \sigma_s^2 \right] \nnb
		=&\ (1-q_1 \mu_m) \|\hw_{i-1}\|^4 + q_2 \mu_m \|\cw_{i-1}\|^4 + q_4 \mu_m^4 \left( \|\hw_{i-1}\|^4 + \|\cw_{i-1}\|^4 \right) + q_5\mu_m^4\nnb
		&\quad + q_6\mu_m^2\left[ (1-q_1 \mu_m) \|\hw_{i-1}\|^2 + q_3 \mu_m \|\cw_{i-1}\|^2 \right] \left[ \gamma^2 B_1 \left( \|\hw_{i-1}\|^2 + \|\cw_{i-1}\|^2 \right) + \sigma_s^2 \right], \nnb
		=&\ (1- q_1 \mu_m + q_4 \mu_m^4) \|\hw_{i-1}\|^4 + (q_2 \mu_m + q_4 \mu_m^4)\|\cw_{i-1}\|^4 + q_5 \mu_m^4  \nnb
		&\quad + q_6 \gamma^2 B_1 (1-q_1\mu_m)\mu_m^2 \|\hw_{i-1}\|^4 + q_6 q_3 \gamma^2 B_1 \mu_m^3 \|\cw_{i-1}\|^4 \nnb
		&\quad + q_6 \gamma^2 B_1 \mu_m^2 (1-q_1\mu_m + q_3 \mu_m) \|\hw_{i-1}\|^2\|\cw_{i-1}\|^2 \nnb
		&\quad + q_6 \sigma_s^2 \mu_m^2 (1-q_1\mu_m)\|\hw_{i-1}\|^2 + q_6 q_3 \sigma_s^2 \mu_m^3 \|\cw_{i-1}\|^2\nnb
		\overset{(a)}{\le} &\ (1- q_1 \mu_m + q_4 \mu_m^4) \|\hw_{i-1}\|^4 + (q_2 \mu_m + q_4 \mu_m^4)\|\cw_{i-1}\|^4 + q_5 \mu_m^4  \nnb
		&\quad + q_6 \gamma^2 B_1 (1-q_1\mu_m)\mu_m^2 \|\hw_{i-1}\|^4 + q_6 q_3 \gamma^2 B_1 \mu_m^3 \|\cw_{i-1}\|^4 \nnb
		&\quad + q_6 \gamma^2 B_1 \mu_m^2 (1-q_1\mu_m + q_3 \mu_m) \left(\|\hw_{i-1}\|^4 + \|\cw_{i-1}\|^4 \right) \nnb
		&\quad + q_6 \sigma_s^2 \mu_m^2 (1-q_1\mu_m)\|\hw_{i-1}\|^2 + q_6 q_3 \sigma_s^2 \mu_m^3 \|\cw_{i-1}\|^2,
		}
		where we define 
		\eq{\label{239asd7h4}
		q_4 \define \frac{3B_2 \gamma_4^4}{(1-\beta)^4}, \quad q_5 \define \frac{3\sigma_{s,4}^4}{(1-\beta)^4}, \quad q_6 \define \frac{8}{(1-\beta)^2},
		}
		and (a) holds because for any two variables $a,b > 0$ we have 
		\eq{\label{284567j}
		ab < 2ab \le a^2 + b^2.
		}
		When $\mu_m$ is chosen sufficiently small, from \eqref{239asj5} we reach
		\eq{\label{lkj;lu28}
		&\ \bE[\|\hw_i\|^4|\filt_{i-1}] \nnb
		\le &\ \left(1- \frac{q_1 \mu_m}{2}\right) \|\hw_{i-1}\|^4 + 2 q_2 \mu_m \|\cw_{i-1}\|^4 + q_6 \sigma_s^2 \mu_m^2\|\hw_{i-1}\|^2 + q_6 q_3 \sigma_s^2 \mu_m^3 \|\cw_{i-1}\|^2+ q_5 \mu_m^4 \nnb
		=&\ \left(1- \frac{q_1 \mu_m}{2}\right) \|\hw_{i-1}\|^4 + 2 q_2 \mu_m \|\cw_{i-1}\|^4 + q_7 \mu_m^2\|\hw_{i-1}\|^2 + q_8 \mu_m^3 \|\cw_{i-1}\|^2+ q_5 \mu_m^4, 
		}
		where we define
		\eq{\label{239adsalj8}
		q_7 \define q_6\sigma_s^2, \quad \quad q_8 \define q_6 q_3 \sigma_s^2.
		}
		
		On the other hand, recall from \eqref{second row} that
		\eq{
			\label{second row-cor-2}
			\cw_i=-\frac{\mu_m}{1-\beta} \H_{i-1} \hw_{i-1} & + \left(\beta I_M + \frac{\mu_m \beta^\prime}{1-\beta} \H_{i-1}\right)\cw_{i-1}  + \frac{\mu_m}{1-\beta} \s_i(\bpsi_{i-1}).
		}
		Now we also apply inequality \eqref{a+b 4 bound-asdluy} to the above equation and get
		\eq{
			&\hspace{-1cm}	\bE[\|\cw_i\|^4|\filt_{i-1}] \nnb
			=	&\left\| -\frac{\mu_m \H_{i-1}}{1-\beta} \hw_{i-1}  + \left(\beta I_M + \frac{\mu_m \beta^\prime}{1-\beta} \H_{i-1}\right)\cw_{i-1} \right\|^4  + \frac{3\mu_m^4 \bE\left[\|\s_i(\bpsi_{i-1})\|^4|\filt_{i-1}\right]}{(1-\beta)^4} \nonumber  \\
			& +\frac{8\mu_m^2}{(1-\beta)^2}\hspace{-1mm} \left\|\frac{-\mu_m \H_{i-1}}{1-\beta} \hw_{i-1} \hspace{-1mm}  + \hspace{-1mm} \left(\hspace{-1mm} \beta I_M \hspace{-0.5mm} + \hspace{-0.5mm} \frac{\mu_m \beta^\prime}{1-\beta} \H_{i-1}\hspace{-1mm} \right)\hspace{-1mm} \cw_{i-1}\right\|^2  \bE\left[\|\s_i(\bpsi_{i-1})\|^2|\filt_{i-1}\right]  \nnb
			\le & \left\| -\frac{\mu_m \H_{i-1}}{1-\beta} \hw_{i-1}  + \left(\beta I_M + \frac{\mu_m \beta^\prime}{1-\beta} \H_{i-1}\right)\cw_{i-1} \right\|^4 + \frac{3\mu_m^4 (\gamma_4^4 \|\tpsi_{i-1}\|^4+\sigma_{s,4}^4)}{(1-\beta)^4} \nnb
			& + \frac{8\mu_m^2}{(1-\beta)^2}\hspace{-1mm} \left\|\frac{-\mu_m \H_{i-1}}{1-\beta} \hw_{i-1} \hspace{-1mm}  + \hspace{-1mm} \left(\hspace{-1mm} \beta I_M \hspace{-0.5mm} + \hspace{-0.5mm} \frac{\mu_m \beta^\prime}{1-\beta} \H_{i-1}\hspace{-1mm} \right)\hspace{-1mm} \cw_{i-1}\right\|^2  (\gamma^2 \|\tpsi_{i-1}\|^2+\sigma_s^2) . \label{cw-4-bound}
		}
		We next bound each of the terms that appear on the right-hand side.  Using Jensen's inequality, the
		lower and upper bounds on the Hessian matrix from (\ref{cost function assumption}), and
		the inequality $\|a+b\|^4\leq 8\|a\|^4+8\|b\|^4$, we have
		\eq{
			&\hspace{-2cm} \left\| -\frac{\mu_m \H_{i-1}}{1-\beta} \hw_{i-1}  + \left(\beta I_M + \frac{\mu_m \beta^\prime}{1-\beta} \H_{i-1}\right)\cw_{i-1} \right\|^4 \nnb
			=&\ \left\|\beta\cw_{i-1}+(1-\beta)\left(\frac{\mu_m\beta'}{(1-\beta)^2}
			\H_{i-1}\cw_{i-1}-\frac{\mu_m}{(1-\beta)^2}\H_{i-1}\hw_{i-1}\right)\right\|^4\nnb
			\le&\ \beta\|\cw_{i-1}\|^4 + (1-\beta)\left\|\frac{\mu_m \beta'}{(1-\beta)^2}
			\H_{i-1}\cw_{i-1}-\frac{\mu_m}{(1-\beta)^2}\H_{i-1}\hw_{i-1}\right\|^4\nnb
			\le&\ \beta\|\cw_{i-1}\|^4  + \frac{8\mu_m^4 \beta^{\prime 4}\delta^4}{(1-\beta)^7}\|\cw_{i-1}\|^4 +\frac{8\mu_m^4\delta^4}{(1-\beta)^7}\|\hw_{i-1}\|^4\nnb
			=&\ (\beta+p_1 \mu_m^4) \|\cw_{i-1}\|^4 + p_2 \mu_m^4 \|\hw_{i-1}\|^4, \label{subbound-1}
		}
		where we define
		\eq{\label{238asdj;lo8}
		p_1 \define \frac{8 \beta^{\prime 4}\delta^4}{(1-\beta)^7},\quad \quad p_2 \define \frac{8\delta^4}{(1-\beta)^7}.
	}
		Moreover, recall in \eqref{sta-cwi-bound-1} that 
		\eq{\label{238asdn;}
		&\hspace{-3cm} \left\|\beta \cw_{i-1} + \left(\frac{\mu_m \beta^\prime \H_{i-1}}{1-\beta}\cw_{i-1} -\frac{\mu_m \H_{i-1}}{1-\beta} \hw_{i-1}\right) \right\|^2  \nnb
		&\le\  \Big(\beta \hspace{-0.5mm}+\hspace{-0.5mm}  \frac{2\mu_m^2 \beta^{\prime 2} \delta^2}{(1\hspace{-0.5mm}-\hspace{-0.5mm}\beta)^3}  \Big) \|\cw_{i\hspace{-0.5mm}-\hspace{-0.5mm}1}\|^2 \hspace{-0.5mm}+\hspace{-0.5mm} \frac{2\mu_m^2 \delta^2}{(1\hspace{-0.5mm}-\hspace{-0.5mm}\beta)^3} \|\hw_{i\hspace{-0.5mm}-\hspace{-0.5mm}1}\|^2 \nnb
		&=\ (\beta + p_3 \mu_m^2) \|\cw_{i-1}\|^2 + p_4 \mu_m^2 \|\hw_{i-1}\|^2,
	}
	where we define
	\eq{\label{28asdhn}
	p_3 \define \frac{2\beta^{\prime 2}\delta^2}{(1-\beta)^3}, \quad p_4 \define \frac{2\delta^2}{(1-\beta)^3}.
	}
	Now substituting \eqref{subbound-1}, \eqref{238asdn;}, \eqref{subbound-3-klklly3} and \eqref{subbound-4-lklhyklu2} into \eqref{cw-4-bound}, we have
	\eq{
		&\	\bE[\|\cw_i\|^4|\filt_{i-1}] \nnb
		\le &\ (\beta+p_1 \mu_m^4) \|\cw_{i-1}\|^4 + p_2 \mu_m^4 \|\hw_{i-1}\|^4 + \frac{3B_2 \gamma_4^4 \mu_m^4 }{(1-\beta)^4}\left( \|\hw_{i-1}\|^4 + \|\cw_{i-1}\|^4 \right) + \frac{3\sigma_{s,4}^4 \mu_m^4}{(1-\beta)^4}\nnb
		& \quad + \frac{8\mu_m^2}{(1-\beta)^2}\left[ ( \beta + p_3 \mu_m^2 ) \|\cw_{i-1}\|^2 + p_4 \mu_m^2 \|\hw_{i-1}\|^2 \right] \left[ \gamma^2 B_1 \left( \|\hw_{i-1}\|^2 + \|\cw_{i-1}\|^2 \right) + \sigma_s^2 \right] \nnb
		= &\ (\beta+p_1 \mu_m^4) \|\cw_{i-1}\|^4 + p_2 \mu_m^4 \|\hw_{i-1}\|^4 + p_5 \mu_m^4\left( \|\hw_{i-1}\|^4 + \|\cw_{i-1}\|^4 \right) + p_6 \mu_m^4\nnb
		& \quad + p_7\mu_m^2 \left[ ( \beta + p_3 \mu_m^2 ) \|\cw_{i-1}\|^2 + p_4 \mu_m^2 \|\hw_{i-1}\|^2 \right] \left[ \gamma^2 B_1 \left( \|\hw_{i-1}\|^2 + \|\cw_{i-1}\|^2 \right) + \sigma_s^2 \right] \nnb
		=&\ [\beta + (p_1 + p_5) \mu_m^4]\|\cw_{i-1}\|^4 + (p_2 + p_5)\mu_m^4 \|\hw_{i-1}\|^4 + p_6 \mu_m^4\nnb
		&\quad + p_7\gamma^2 B_1\mu_m^2(\beta + p_3 \mu_m^2)\|\cw_{i-1}\|^4 + p_4 p_7 \gamma^2 B_1 \mu_m^4 \|\hw_{i-1}\|^4 \nnb
		&\quad + p_7 \gamma^2 B_1 \mu_m^2 (\beta + p_3 \mu_m^2 + p_4 \mu_m^2)\|\hw_{i-1}\|^2\|\cw_{i-1}\|^2\nnb
		&\quad + p_7 \sigma_s^2 \mu_m^2 (\beta + p_3 \mu_m^2)\|\cw_{i-1}\|^2 + p_4 p_7 \sigma_s^2 \mu_m^4 \|\hw_{i-1}\|^2 \nnb
		\le&\ [\beta + (p_1 + p_5) \mu_m^4]\|\cw_{i-1}\|^4 + (p_2 + p_5)\mu_m^4 \|\hw_{i-1}\|^4 + p_6 \mu_m^4\nnb
		&\quad + p_7\gamma^2 B_1\mu_m^2(\beta + p_3 \mu_m^2)\|\cw_{i-1}\|^4 + p_4 p_7 \gamma^2 B_1 \mu_m^4 \|\hw_{i-1}\|^4 \nnb
		&\quad + p_7 \gamma^2 B_1 \mu_m^2 (\beta + p_3 \mu_m^2 + p_4 \mu_m^2)\left(\|\cw_{i-1}\|^4 + \|\hw_{i-1}\|^4 \right)\nnb
		&\quad + p_7 \sigma_s^2 \mu_m^2 (\beta + p_3 \mu_m^2)\|\cw_{i-1}\|^2 + p_4 p_7 \sigma_s^2 \mu_m^4 \|\hw_{i-1}\|^2,
		 \label{cw-4-bound-2hasu}
	}
	where we define 
	\eq{\label{2397an7}
	p_5\define \frac{3B_2\gamma_4^4}{(1-\beta)^4},\quad p_6\define \frac{3\sigma_{s,4}^4}{(1-\beta)^4},\quad p_7 \define \frac{8}{(1-\beta)^2}.
	}
	When $\mu_m$ is sufficiently small, we obtain from \eqref{cw-4-bound-2hasu}:
	\eq{\label{lkj;lu28239ah}
		&\ \bE[\|\cw_i\|^4|\filt_{i-1}] \nnb
		\le &\ (\beta + 2p_7\gamma^2 B_1 \mu_m^2) \|\cw_{i-1}\|^4 + 2p_7\gamma^2 B_1 \mu_m^2 \|\hw_{i-1}\|^4 + p_4 p_7 \sigma_s^2 \mu_m^4 \|\hw_{i-1}\|^2 \nnb
		&\quad + 2 p_7 \beta \sigma_s^2 \mu_m^2 \|\cw_{i-1}\|^2 + p_6 \mu_m^4 \nnb
		= &\ (\beta + p_8 \mu_m^2) \|\cw_{i-1}\|^4 + p_8 \mu_m^2 \|\hw_{i-1}\|^4 +  p_9 \mu_m^4 \|\hw_{i-1}\|^2  + p_{10} \mu_m^2 \|\cw_{i-1}\|^2 + p_6 \mu_m^4
	}
	where we define
	\eq{\label{239adsalj8-27ah}
		p_8 \define 2p_7\gamma^2 B_1, \quad p_9 \define p_4 p_7 \sigma_s^2, \quad p_{10} \define 2 p_7 \beta \sigma_s^2.
	}
	Combining \eqref{lkj;lu28} and \eqref{lkj;lu28239ah}, we have
	\eq{\label{238asd7}
	\ba{c}
	\bE[\|\hw_i\|^4|\filt_{i-1}]\\
	\bE[\|\cw_i\|^4|\filt_{i-1}]
	\ea
	\le
	\ba{cc}
	a & b\\
	c & d
	\ea
	\ba{c}
	\|\hw_{i-1}\|^4\\
	\|\cw_{i-1}\|^4
	\ea +
	\ba{cc}
	a' & b'\\
	c' & d'
	\ea
	\ba{c}
	\|\hw_{i-1}\|^2\\
	\|\cw_{i-1}\|^2
	\ea
	+
	\ba{c}
	e\\
	f
	\ea,
	}
	where the constants are
	\eq{\label{238adh5ck;}
	& a \define 1 - \frac{q_1}{2}\mu_m, \quad b \define 2q_2 \mu_m, \quad \hspace{5mm}a' \define q_6 \sigma_s^2 \mu_m^2, \quad b' \define q_3 q_6 \sigma_s^2 \mu_m^3, \nnb
	& c \define p_8 \mu_m^2,\quad \hspace{7.5mm}d \define \beta + p_8 \mu_m^2,\quad c'\define p_9\mu_m^4,\quad \hspace{3mm}d'\define p_{10}\mu_m^2, \nnb
	& e \define q_5\mu_m^4,\quad \hspace{7.5mm}f\define p_6\mu_m^4.
	}
	Taking expectations again over $\filt_{i-1}$ for both sides of the inequality \eqref{238asd7}, we have
	\eq{\label{238asd7-237anml}
		\ba{c}
		\bE\|\hw_i\|^4\\
		\bE\|\cw_i\|^4
		\ea
		\le
		\underbrace{
			\ba{cc}
			a & b\\
			c & d
			\ea}_{\Gamma}
		\ba{c}
		\bE\|\hw_{i-1}\|^4\\
		\bE\|\cw_{i-1}\|^4
		\ea +
		\ba{cc}
		a' & b'\\
		c' & d'
		\ea
		\ba{c}
		\bE\|\hw_{i-1}\|^2\\
		\bE\|\cw_{i-1}\|^2
		\ea
		+
		\ba{c}
		e\\
		f
		\ea,
	}
	Recall from Theorem \ref{thm-conv-cw} that
	\eq{\label{238adsb}
	\limsup_{i\to \infty}\bE\|\hw_{i-1}\|^2 = O(\mu_m), \quad \limsup_{i\to \infty}\bE\|\cw_{i-1}\|^2 = O(\mu_m^2),
	}
	then it holds that
	\eq{\label{nadhk237}
	\limsup_{i\to \infty} 
	\ba{cc}
	a' & b' \\
	c' & d'
	\ea
	\ba{c}
	\bE\|\hw_{i-1}\|^2 \\
	\bE\|\cw_{i-1}\|^2
	\ea
	+ 
	\ba{c}
	e\\
	f
	\ea
	=
	\ba{c}
	O(\mu_m^3) \\
	O(\mu_m^4)
	\ea.
	}
	When $\mu_m$ is sufficiently small, it can be verified that $\Gamma$ is stable. Therefore, it holds that
	\eq{\label{2389adsbn}
	\ba{c}
	\limsup_{i\to \infty}\bE\|\hw_i\|^4\\
	\limsup_{i\to \infty}\bE\|\cw_i\|^4
	\ea
	&= (I- \Gamma)^{-1} \left( \limsup_{i\to \infty} 
	\ba{cc}
	a' & b' \\
	c' & d'
	\ea
	\ba{c}
	\bE\|\hw_{i-1}\|^2 \\
	\bE\|\cw_{i-1}\|^2
	\ea
	+ 
	\ba{c}
	e\\
	f
	\ea \right) \nnb
	&=
	\ba{c}
	O(\mu_m^2) \\
	O(\mu_m^4)
	\ea.
	}
		Furthermore, 
		\eq{\label{238adh}
			\limsup_{i\to \infty} \bE\|\tw_i\|^4 \le 2v^4 \limsup_{i\to \infty}(\bE\|\hw_i\|^4 + \bE\|\cw_i\|^4 ) = O \left(\mu_m^2\right).
		}

}
		
		\section{Proof of Corollary \ref{co:hb-cw-u2-4}}\label{app:corr-2aa}
		{\color{black}
		Recall from \eqref{238asd7-237anml} that
		\eq{\label{238asd7-237anml-alkj2367}
			\underbrace{
			\ba{c}
			\bE\|\hw_i\|^4\\
			\bE\|\cw_i\|^4
			\ea}_{y_i}
			\le
			\underbrace{
				\ba{cc}
				a & b\\
				c & d
				\ea}_{\Gamma_1}
			\underbrace{
			\ba{c}
			\bE\|\hw_{i-1}\|^4\\
			\bE\|\cw_{i-1}\|^4
			\ea}_{y_{i-1}} +
			\underbrace{
			\ba{cc}
			a' & b'\\
			c' & d'
			\ea}_{\Gamma_2}
			\underbrace{
			\ba{c}
			\bE\|\hw_{i-1}\|^2\\
			\bE\|\cw_{i-1}\|^2
			\ea}_{z_{i-1}}
			+
			\underbrace{
			\ba{c}
			e\\
			f
			\ea}_{r}.
		}
		We then have
		\eq{\label{,asdhk8}
		\|y_i\|_1 \le \|\Gamma_1\|_1 \|y_{i-1}\|_1 + \|\Gamma_2\|_1\|z_{i-1}\|_1 + \|r\|_1.
		}
		Notice that 
		\eq{\label{n8236ahdl}
		\|\Gamma_1\|_1 = \max\left\{1 - \frac{q_1\mu_m}{2} + p_8 \mu_m^2, \beta + 2q_2 \mu_m + p_8 \mu_m^2 \right\},
		}
		we can always choose $\mu_m$ small enough such that 
		\eq{\label{l237adsbco98}
		\|\Gamma_1\|_1 \le 1 - \frac{q_1 \mu_m}{4} = 1 - \frac{\mu_m \nu}{4(1-\beta)}\define \rho_2.
		}
		Similarly, we can also choose $\mu_m$ small enough such that 
		\eq{\label{m279h}
		\|\Gamma_2\|_1 = \max\left\{q_6 \sigma_s^2 \mu_m^2 + p_9 \mu_m^4, q_3q_6\sigma_s^2 \mu_m^3 + p_{10}\mu_m^2 \right\} \le (q_6\sigma_s^2 + p_{10})\mu_m^2.
		}
		Also recall \eqref{zi inequality-3asdht} that 
		\eq{
			\label{zi inequality-3asdht-236a}
			\|z_i\|_1 \le \rho_1^{i+1} \|z_{-1}\|_1 +  s_1 \mu_m \overset{(a)}{\le} \rho_2^{i+1} \|z_{-1}\|_1 + s_1 \mu_m
		}
		where we define 
		\eq{\label{238adn}
		s_1 \define \frac{2E_1\sigma_s^2}{(1-\beta)\nu},
		}
		for some constant $E_1$, and (a) holds because $\rho_1 = 1 - \frac{\mu_m \nu}{2(1-\beta)}\le \rho_2$. Substituting \eqref{l237adsbco98}, \eqref{m279h} and \eqref{zi inequality-3asdht-236a} into \eqref{,asdhk8}, we have
		\eq{\label{b7628ayh}
		\|y_i\|_1 \le&\ \rho_2 \|y_{i-1}\|_1 + (q_6 \sigma_s^2 + p_{10})(\rho_2^i\|z_{-1}\|_1 + s_1\mu_m)\mu_m^2 + 2p_6 \mu_m^4 \nnb
		=&\ \rho_2 \|y_{i-1}\|_1 + s_2 \rho_2^i\mu_m^2 + s_3 \mu_m^3 + 2p_6 \mu_m^4 \nnb
		\overset{(a)}{\le}&\ \rho_2 \|y_{i-1}\|_1 + s_2 \rho_2^i\mu_m^2 + 2 s_3 \mu_m^3,
		}
		where (a) holds when $\mu_m$ is sufficiently small, and the constants $s_2$ and $s_3$ are defined as
		\eq{\label{2890jl}
		s_2 \define (q_6 \sigma_s^2 + p_{10})\|z_{-1}\|_1,\quad s_3 \define (q_6 \sigma_s^2 + p_{10})s_1.
		}
		Iterating \eqref{b7628ayh}, we reach
		\eq{\label{8a76cna0}
		\|y_i\|_1 \le \rho_2^{i+1} \|y_{-1}\|_1 + s_2 (i+1)\rho_2^i \mu_m^2 + \frac{2 s_3 \mu_m^3}{1-\rho_2}.
		}
		Since $\|y_i\|_1 = \bE\|\hw_i\|^4 + \bE\|\cw_i\|^4$, we have
		\eq{\label{8a76cna0-236an,l}
			\bE\|\hw_i\|^4 & \le \rho_2^{i+1} \|y_{-1}\|_1 + s_2 (i+1)\rho_2^i \mu_m^2 + \frac{2 s_3 \mu_m^3}{1-\rho_2} \nnb
			& = \rho_2^{i+1} \|y_{-1}\|_1 + s_2 (i+1)\rho_2^i \mu_m^2 + \frac{8(1-\beta)s_3\mu_m^2}{\nu} \nnb
			& = \rho_2^{i+1} \|y_{-1}\|_1 + s_2 (i+1)\rho_2^i \mu_m^2 + s_4 \mu_m^2,
		}
		where $s_4$ is defined as
		\eq{\label{b98236}
		s_4 \define \frac{8(1-\beta)s_3}{\nu}
		}
		Now we substitute \eqref{8a76cna0-236an,l} and \eqref{2348adsgh} into the second row of \eqref{238asd7-237anml} and reach
		\eq{\label{2380adan}
		\bE\|\cw_i\|^4 \le&\ (\beta + p_8\mu_m^2)\bE\|\cw_{i-1}\|^4 + p_8\mu_m^2 \left[ \rho_2^{i} \|y_{-1}\|_1 + s_2 i \rho_2^{i-1} \mu_m^2 + s_4 \mu_m^2 \right]\nnb
		&\quad + p_{10}\mu_m^2 \|\cw_{i-1}\|^2 + p_9 \mu_m^4 \left[ \rho_2^i \|z_{-1}\|_1 + s_1 \mu_m \right] + p_6\mu_m^4. 
		}
		Using the bounds for $\bE\|\cw_i\|^2$ from Corollary \ref{co:hb-cw-u2}, the above inequality becomes
		\eq{\label{2380adan-adshlakj7}
			\bE\|\cw_i\|^4 \le&\ (\beta + p_8\mu_m^2)\bE\|\cw_{i-1}\|^4 + p_8\mu_m^2 \left[ \rho_2^{i} \|y_{-1}\|_1 + s_2 i \rho_2^{i-1} \mu_m^2 + s_4 \mu_m^2 \right]\nnb
			&\quad + p_{10}E_2\mu_m^2 \left(\frac{(\delta^2 + \gamma^2)\rho_1^{i}\mu_m^2}{(1-\beta)^4} + \frac{\sigma_s^2 \mu_m^2}{(1-\beta)^3}  \right) \nnb
			&\quad + p_9 \mu_m^4 \left[ \rho_2^i \|z_{-1}\|_1 + s_1 \mu_m \right] + p_6\mu_m^4 \nnb
			\overset{(a)}{\le}&\ (1-\frac{1-\beta}{2}) \bE\|\cw_{i-1}\|^4 + s_5 \rho_2^{i}\mu_m^2 + s_6 i \rho_2^{i-1}\mu_m^4 + s_7 \mu_m^4  \nnb
			&\quad + s_8 \rho_1^{i}\mu_m^4 + s_9\mu_m^4 + s_{10} \rho_2^i \mu_m^4 + p_6\mu_m^4 + s_1 p_9\mu_m^5 \nnb
			\overset{(b)}{\le}&\ \alpha \bE\|\cw_{i-1}\|^4 + s_5 \rho_2^i \mu_m^2 + s_6i\rho_2^{i-1}\mu_m^4 + s_8 \rho_1^{i}\mu_m^4 + s_{10} \rho_2^i \mu_m^4 \nnb
			&\quad + (s_7 + s_9 + 2p_6) \mu_m^4 \nnb
			\overset{(c)}{\le}&\ \alpha \bE\|\cw_{i-1}\|^4 + s_5 \rho_2^i \mu_m^2 + s_6i\rho_2^{i-1}\mu_m^4 + s_8 \rho_2^{i}\mu_m^4 + s_{10} \rho_2^i \mu_m^4 \nnb
			&\quad + (s_7 + s_9 + 2p_6) \mu_m^4 
		}
		where $E_2$ is some constant. The inequality (a) holds because step-size $\mu_m$ is chosen small enough such that ${(1-\beta)}/{2}>p_8\mu_m^2$, (b) holds because $\alpha = 1 - (1-\beta)/2$ and $\mu_m$ is chosen such that $p_6\mu_m^4 > s_1p_9 \mu_m^5$, and (c) holds because $\rho_1 < \rho_2$.
		 Moreover, the other constants are defined as
		\eq{\label{76239an}
		& s_5\define p_8\|y_{-1}\|_1, \quad \hspace{1.05cm}s_6\define p_8s_2, \quad \hspace{9mm}s_7 \define p_8s_4 \nnb
		& s_8\define \frac{p_{10}E_2(\delta^2+\gamma^2)}{(1-\beta)^4}, \quad s_9\define \frac{p_{10}E_2\sigma_s^2}{(1-\beta)^3},\quad s_{10}\define p_9\|z_{-1}\|_1.
		}
		Now we continue iterating \eqref{2380adan-adshlakj7} and reach
		\eq{\label{2380adan-adshlakj7-236a8m}
			\bE\|\cw_i\|^4 \le&\ \alpha^{i+1}\bE\|\cw_{-1}\|^4 + s_5\mu_m^2 \rho_2^i\sum_{k=0}^{i}\left(\frac{\alpha}{\rho_2}\right)^k + s_6\mu_m^4\rho_2^{i-1} \sum_{k=0}^{i}(i-k)\left(\frac{\alpha}{\rho_2}\right)^k \nnb
			&\quad + s_8\mu_m^4\rho_2^{i}\sum_{k=0}^{i}\left(\frac{\alpha}{\rho_2}\right)^k +s_{10}\mu_m^4\rho_2^i \sum_{k=0}^{i}\left(\frac{\alpha}{\rho_2}\right)^k + \frac{2(s_7 + s_9 + 2p_6) \mu_m^4 }{1-\beta}.
		}
		Recall $\rho_2 = 1 - \frac{\mu_m \nu}{4(1-\beta)}$, and we can choose $\mu_m$ small enough such that
		$\rho_2  > \alpha = 1 - \frac{1-\beta}{2}$. In this situation, we have 
		\eq{\label{238abnc}
			\frac{\alpha}{\rho_2} < 1,\quad \text{and} \quad \sum_{k=0}^{i}\left(\frac{\alpha}{\rho_2}\right)^k < \sum_{k=0}^{\infty}\left(\frac{\alpha}{\rho_2}\right)^k = \frac{\rho_2}{\rho_2 - \alpha} \le  \frac{E_3}{1-\beta}, 
		}
		where $E_3$ is some constant. Meanwhile, we also have
		\eq{\label{238adbcmn}
		\sum_{k=0}^{i}(i-k)\left(\frac{\alpha}{\rho_2}\right)^k \le i \sum_{k=0}^{i}\left(\frac{\alpha}{\rho_2}\right)^k \le i \sum_{k=0}^{\infty}\left(\frac{\alpha}{\rho_2}\right)^k \le \frac{ i E_3}{1-\beta}.
		}
		We substitute \eqref{238abnc} and \eqref{238adbcmn} into \eqref{2380adan-adshlakj7-236a8m}, and reach
		\eq{\label{2380adan-adshlakj7-236a8m-2367ads78}
			\bE\|\cw_i\|^4 \le&\ \rho_2^{i+1}\bE\|\cw_{-1}\|^4 + \frac{E_3 s_5\mu_m^2 \rho_2^i}{1-\beta} + \frac{iE_3 s_6\mu_m^4\rho_2^{i-1}}{1-\beta}  \nnb
			&\quad  + \frac{E_3 s_8\mu_m^4\rho_2^{i}}{1-\beta} +\frac{E_3 s_{10}\mu_m^4\rho_2^i}{1-\beta}   + \frac{2(s_7 + s_9 + 2p_6) \mu_m^4 }{1-\beta}.
		}
		Recall from \eqref{ini-state-142}
		that 
		$ \bE\|\cw_{-1}\|^4=E_4 \mu_m^4,$
		where $E_4=\bE\|\grad_w Q(\w_{-2};\btheta_{-1})\|^4$. Substituting into \eqref{2380adan-adshlakj7-236a8m-2367ads78} we reach that
		\eq{\label{m9236adgjkl-237adg}
			\bE\|\cw_i\|^4 \le&\ E_4 \rho_2^{i+1}\mu_m^4 + \frac{E_3 s_5\mu_m^2 \rho_2^i}{1-\beta} + \frac{iE_3 s_6\mu_m^4\rho_2^{i-1}}{1-\beta} \nnb
			&\quad  + \frac{E_3 s_8\mu_m^4\rho_2^{i}}{1-\beta} +\frac{E_3 s_{10}\mu_m^4\rho_2^i}{1-\beta}   + \frac{2(s_7 + s_9 + 2p_6) \mu_m^4 }{1-\beta}.
		}
		Substituting \eqref{76239an} into \eqref{m9236adgjkl-237adg} and recall $\alpha < \rho_2$, we finally reach
		\eq{\label{2397cnam}
			\bE\|\cw_i\|^4 =&\ O\left( \frac{\gamma^2 \rho_2^i}{(1-\beta)^3}\mu_m^2 + \rho_2^{i+1}\mu_m^4 + \frac{\gamma^2\sigma_s^2 i \rho_2^{i-1}}{(1-\beta)^5}\mu_m^4 + \frac{\sigma_s^2(\delta^2 + \gamma^2)\rho_2^{i}}{(1-\beta)^7}\mu_m^4 + \frac{\delta^2 \sigma_s^2 \rho_2^i}{(1-\beta)^6}\mu_m^4 \right.\nnb
			& \quad \quad \quad \left. + \frac{\gamma^2 \sigma_s^4}{(1-\beta)^5\nu^2}\mu_m^4 + \frac{\sigma_s^4}{(1-\beta)^6}\mu_m^4 + \frac{\sigma_{s,4}^4}{(1-\beta)^5}\mu_m^4  \right). \nnb
		}
		Since there must exist some constants $E_5$, $E_6$ and $E_7$ such that
		\eq{\label{ah2378}
		\frac{\gamma^2 \rho_2^i}{(1-\beta)^3}\mu_m^2 &\le \frac{E_5 \gamma^2 \rho_2^{i+1}}{(1-\beta)^3}\mu_m^2,\nnb
		\rho_2^{i+1}\mu_m^4 + \frac{\gamma^2\sigma_s^2 i \rho_2^{i-1}}{(1-\beta)^5}\mu_m^4 + \frac{\sigma_s^2(\delta^2 + \gamma^2)\rho_2^{i}}{(1-\beta)^7}\mu_m^4 + \frac{\delta^2 \sigma_s^2 \rho_2^i}{(1-\beta)^6}\mu_m^4  & \le \frac{E_6\sigma_s^2(\delta^2 + \gamma^2)(i+1)\rho_2^{i+1}}{(1-\beta)^7}\mu_m^4, \nnb
		\frac{\gamma^2 \sigma_s^4}{(1-\beta)^5\nu^2}\mu_m^4 + \frac{\sigma_s^4}{(1-\beta)^6}\mu_m^4 + \frac{\sigma_{s,4}^4}{(1-\beta)^5}\mu_m^4 & \le \frac{E_7 [(\gamma^2 + \nu^2) \sigma_s^4 + \sigma_{s,4}^4\nu^2]}{(1-\beta)^6\nu^2}\mu_m^4,
		}
		we finally reach the conclusion in \eqref{238asdbmnc}.
		}

		\section{Proof of Theorem \ref{thm:gfat-stochastic gradient method-dist-hw}}
		\label{proof of thm 3}
		Subtracting (\ref{stochastic gradient method tx update}) and (\ref{rl tw-2}) we get
		\beqn
		\label{lm1-hw-tx}
		\hw_i-\tx_i=(I_M-\mu R_u)(\hw_{i-1}-\tx_{i-1})  + \mu (\s_i(\bpsi_{i-1}) - \s_i(\x_{i-1}))  + {\mu \beta^\prime} R_u\cw_{i-1},
		\eeqn
		where
		\eq{
			\s_i(\bpsi_{i-1})=(R_u-\u_i \u_i\tran) \tpsi_{i-1} - \u_i \v(i),\quad \s_i(\x_{i-1})=(R_u-\u_i \u_i\tran) \tx_{i-1} - \u_i \v(i). \label{si_x}
		}
		Substituting into \eqref{lm1-hw-tx} gives
		\eq{
			\label{hw-tx}
			\hw_i-\tx_i=&(I_M-\mu R_u)(\hw_{i-1}-\tx_{i-1}) + \mu (R_u-\u_i \u_i\tran)(\tpsi_{i-1} - \tx_{i-1})  + {\mu \beta^\prime}R_u\cw_{i-1}.
		}
		Now note that in the quadratic case, the Hessian matrix of $J(w)$ is equal to $R_u$. It follows that condition
		(\ref{cost function assumption}) is satisfied with the identifications
		$
		\nu=\lambda_{\min}(R_u),
		\delta=\lambda_{\max}(R_u).
		$
		Let $t\in(0,1)$. By squaring (\ref{hw-tx}) and taking expectations, and applying Jensen's inequality, we obtain
		\bqq
		& &\hspace{-1cm}  \bE \|\hw_i-\tx_i\|^2 \nnb
		&\le & \bE\left\|(I_M-\mu R_u)(\hw_{i-1}-\tx_{i-1}) + {\mu \beta^\prime}R_u\cw_{i-1}\right\|^2 + \mu^2 \bE\|R_u-\u_i \u_i\tran\|^2 \bE\|\tpsi_{i-1} - \tx_{i-1}\|^2 \nnb
		&\overset{\text{(a)}}{\le}&\  \frac{1}{1-t} (1-\mu \nu)^2 \bE\|\hw_{i-1}-\tx_{i-1} \|^2 + \frac{1}{t}{\mu^2\beta^{\prime 2}\delta^2}\bE \|\cw_{i-1}\|^2+ B_1 \mu^2  \bE\|\tpsi_{i-1} - \tx_{i-1}\|^2 \nnb
		&\overset{\text{(b)}}{\le}&\  (1-\mu \nu) \bE\|\hw_{i-1}-\tx_{i-1} \|^2+\frac{\mu \beta^{\prime 2}\delta^2}{\nu}\bE \|\cw_{i-1}\|^2 + B_1 \mu^2  \bE\|\tpsi_{i-1} - \tx_{i-1}\|^2,
		\label{hw-tx-expectation}
		\eqq
		where (a) holds because of Jensen's inequality and we let $B_1=\bE\|R_u-\u_i \u_i\tran\|^2$, and (b) holds by choosing $t=\mu\nu$. To bound the last term in the above relation, we use \eqref{sgfat-1-tilde} to note that
		\begin{align}
		\label{gfat:relation btw hw and tx}
		\tpsi_i-\tx_i&=\tw_{i}+\beta_1 (\tw_i-\tw_{i-1})-\tx_i =(\tw_{i}-\tx_i)-\beta_1(\tw_{i-1}-\tw_{i})
		\end{align}
		On the other hand, from (\ref{transform}) we have
		\begin{align}
		\label{relation btw hw and tx}
		\hw_i-\tx_i&=\frac{1}{1-\beta}(\tw_i-\tx_i)-\frac{\beta}{1-\beta}(\tw_{i-1}-\tx_i) = (\tw_i-\tx_i) - \frac{\beta}{1-\beta}(\tw_{i-1}-\tw_i).
		\end{align}
		so that
		\eq{
		\label{nag:relation btw hw and tx}
		\tpsi_i\hspace{-0.5mm}-\hspace{-0.5mm}\tx_i=\hw_i\hspace{-0.5mm}-\hspace{-0.5mm}\tx_i \hspace{-0.5mm}+\hspace{-0.5mm} \frac{\beta \hspace{-0.5mm}-\hspace{-0.5mm} \beta_1 \hspace{-0.5mm}+\hspace{-0.5mm} \beta \beta_1}{1\hspace{-0.5mm}-\hspace{-0.5mm}\beta}(\tw_{i\hspace{-0.5mm}-\hspace{-0.5mm}1}\hspace{-0.5mm}-\hspace{-0.5mm}\tw_i) = \hw_i\hspace{-0.5mm}-\hspace{-0.5mm}\tx_i \hspace{-0.5mm}+\hspace{-0.5mm} \frac{\beta^\prime}{1\hspace{-0.5mm}-\hspace{-0.5mm}\beta}(\tw_{i\hspace{-0.5mm}-\hspace{-0.5mm}1}\hspace{-0.5mm}-\hspace{-0.5mm}\tw_i) =\hw_i\hspace{-0.5mm}-\hspace{-0.5mm}\tx_i \hspace{-0.5mm}-\hspace{-0.5mm} \beta^\prime \cw_i.
	}
		where we used the definition for $\beta^\prime$ from \eqref{beta prime} and the definition for $\cw_i$ from \eqref{transform}.
		Therefore, from Jensen's inequality again, we get
		\eq{
			\label{2nd bound-2}
			\bE\|\tpsi_i-\tx_i\|^2 \le 2 \bE\| \hw_i-\tx_i \|^2 + {2\beta^{\prime\hspace{0.02cm}2}} \bE\|\cw_{i}\|^2.
		}
		Substituting into \eqref{hw-tx-expectation} gives
		\eq{
		\bE \|\hw_i-\tx_i\|^2 &\le  (1-\mu \nu + 2B_1 \mu^2 ) \bE\|\hw_{i-1}-\tx_{i-1} \|^2 +\left(\frac{\mu \beta^{\prime 2}\delta^2}{\nu} + {2B_1\beta^{\prime 2} \mu^2} \right)\bE \|\cw_{i-1}\|^2\nnb
		&\overset{(a)}{\le} \left(1-\frac{\mu \nu}{2} \right) \bE\|\hw_{i-1}-\tx_{i-1} \|^2 +  \frac{2\mu \beta^{\prime 2}\delta^2}{\nu} \bE\|\cw_{i-1}\|^2 \nnb
		&\le \left(1-\frac{\mu \nu}{2} \right) \bE\|\hw_{i-1}-\tx_{i-1} \|^2 +  \frac{B_2 \mu \delta^2}{\nu} \bE\|\cw_{i-1}\|^2,
		\label{hw-tx-final bound}
		}
		where $B_2 = 2 \beta^{\prime 2}$ and the inequality (a) holds when $\mu$ is chosen small enough such that
		\eq{
		\frac{\mu\nu}{2} > 2 B_1 \mu^2 \quad \text{and} \quad \frac{\mu \beta^{\prime 2} \delta^2}{\nu} > 2 B_1 \beta^{\prime 2} \mu^2.
		}
		{\color{black}
		Recall from Corollary \ref{co:hb-cw-u2} 
		that 
		\eq{\label{ash762d}
		\bE\|\cw_i\|^2 \le  C_1 \left( \frac{(\delta^2 + \gamma^2)\rho_1^{i+1}\mu_m^2}{(1-\beta)^4} + \frac{\sigma_s^2 \mu_m^2}{(1-\beta)^3} \right)
		}
		for each iteration $i=0,1,2,3,\ldots$, where $C_1$ is some constant. Recall $\mu=\mu_m/(1-\beta)$, we then have
		 \eq{\label{ash762dsdga2}
		 	\bE\|\cw_i\|^2 \le C_1 \left( \frac{(\delta^2 + \gamma^2) \rho_1^{i+1} \mu^2}{(1-\beta)^2} + \frac{\sigma_s^2 \mu^2}{1-\beta} \right)
		 }
		This fact,
		together with inequality \eqref{hw-tx-final bound}, leads to
		\eq{
		\bE \|\hw_i-\tx_i\|^2 & \leq
		\left(1-\frac{\mu \nu}{2} \right) \bE\|\hw_{i-1}-\tx_{i-1} \|^2 + B_2 C_1 \left( \frac{\delta^2 (\delta^2 + \gamma^2) \rho_1^{i} \mu^3}{\nu(1-\beta)^2} + \frac{\delta^2 \sigma_s^2 \mu^3}{\nu(1-\beta)}\right).
		\label{hw-tx-final bound-2}
		}
		Recall from Corollary \ref{co:hb-cw-u2} that $\rho_1 = 1 - \frac{\mu_m \nu}{2(1-\beta)}=1 - \frac{\mu \nu}{2}$, then \eqref{hw-tx-final bound-2} becomes
		\eq{
			\bE \|\hw_i-\tx_i\|^2  \leq
			\rho_1 \bE\|\hw_{i-1}-\tx_{i-1} \|^2 + B_2 C_1 \left(  \frac{\delta^2(\delta^2 + \gamma^2) \rho_1^{i}  \mu^3}{\nu(1-\beta)^2} + \frac{\delta^2 \sigma_s^2 \mu^3}{\nu(1-\beta)}\right).
			\label{hw-tx-final bound-2-q2lkjsad67}
		}
		For brevity, we denote 
		\eq{\label{238dha}
		e_1 \define \frac{B_2 C_1(\delta^2 + \gamma^2) \delta^2}{\nu(1-\beta)^2}, \quad e_2 \define \frac{B_2 C_1 \delta^2 \sigma_s^2}{\nu(1-\beta)}. 
		}
		Inequality \eqref{hw-tx-final bound-2-q2lkjsad67} will become
		\eq{
			\bE \|\hw_i-\tx_i\|^2  &\leq
			\rho_1 \bE\|\hw_{i-1}-\tx_{i-1} \|^2 + e_1 \rho_1^{i} \mu^3 + e_2 \mu^3 \nnb
			&\le \rho_1^{i+1} \bE\|\hw_{-1}-\tx_{-1} \|^2  + e_1(i+1)\rho_1^{i}\mu^3 + \frac{e_2 \mu^3}{1-\rho_1}.
			\label{salkhreu}
		}
	}
%
%
		Recall from the first equation in \eqref{relation btw hw and tx} that for $i=-1$:
		\eq{
			\hw_{-1}-\tx_{-1}  =\frac{1}{1-\beta}(\tw_{-1}-\tx_{-1})-\frac{\beta}{1-\beta}(\tw_{-2}-\tx_{-1}).
		}
		Now, using the assumption that the momentum and standard recursions started from the same initial states,
		$\w_{-2}=\x_{-1}$ and $\w_{-1}=\w_{-2}-\mu_m\grad_w Q(\w_{-2};\d(-1),\u_{-1})$, and recall $\mu=\mu_m/(1-\beta)$, then we have
		\eq{
			\label{hw-tx=0}
			{\hw}_{-1}-\widetilde{\x}_{-1}&= \frac{1}{1-\beta}(\tw_{-1}-\tw_{-2}) =\mu \grad_w Q(\w_{-2};\d(-1),\u_{-1}).
		}
		Therefore, it holds that 
		\eq{
			\bE\|\widehat{\w}_{-1}-\widetilde{\x}_{-1}\|^2=B_4 \mu^2, \label{sdau23}
		}
		where $B_4=\bE\|\grad_w Q(\w_{-2};\d(-1),\u_{-1})\|^2$. {\color{black} Substituting \eqref{sdau23} and \eqref{238dha} into \eqref{salkhreu}, we reach
		\eq{\label{238asdh}
		\bE \|\hw_i-\tx_i\|^2 \le B_4 \rho_1^{i+1} \mu^2 + \frac{B_2 C_1(\delta^2 + \gamma^2) \delta^2(i+1)\rho_1^{i}}{\nu(1-\beta)^2}\mu^3 + \frac{4B_2C_1\delta^2 \sigma_s^2}{\nu^2(1-\beta)}\mu^2.
		}	
%
%
%
Furthermore, using (\ref{relation btw hw and tx}) and $\cw_i = \frac{\tw_i - \tw_{i-1}}{1-\beta}  $ from \eqref{transform} we have
\eq{\label{2389adsh}
\hw_i - \tx_i = (\tw_i - \tx_i ) + \beta \cw_i,
}
which implies that
		\eq{
			\label{2nd bound-20}
			\bE\|\tw_i-\tx_i\|^2 \le 2 \bE\| \hw_i-\tx_i \|^2 + 2\beta^2 \bE\|\cw_{i}\|^2 \le 2 \bE\| \hw_i-\tx_i \|^2 + 2 \bE\|\cw_{i}\|^.
		}
		Substituting \eqref{ash762dsdga2} and \eqref{238asdh} into \eqref{2nd bound-20}, we have
		\eq{\label{238asdhj4}
		\bE\|\tw_i-\tx_i\|^2 \le&\ B_4 \rho_1^{i+1} \mu^2 + \frac{B_2 C_1(\delta^2 + \gamma^2) \delta^2(i+1)\rho_1^{i}}{\nu(1-\beta)^2}\mu^3 + \frac{4B_2C_1\delta^2 \sigma_s^2}{\nu^2(1-\beta)}\mu^2 \nnb
		&\quad\quad + 2 C_1 \left( \frac{(\delta^2 + \gamma^2) \rho_1^{i+1} \mu^2}{(1-\beta)^2} + \frac{\sigma_s^2 \mu^2}{1-\beta} \right)\nnb
		=&\ O\left( \frac{\delta^2 + \gamma^2}{(1-\beta)^2} \rho_1^{i+1} \mu^2 +  \frac{\delta^2(\delta^2 + \gamma^2) (i+1)\rho_1^{i+1}}{\nu(1-\beta)^2}\mu^3 + \frac{\delta^2 \sigma_s^2 \mu^2 }{\nu^2(1-\beta)} \right).
		}
	}
%
%

		\section{Verifying Assumptions \ref{ass:noise-lipschitz} and  \ref{ass: hessian lipschitz} }
		\label{app-feasibility}
		\noindent{\textbf{Least-mean-squares problem.}} Consider first the mean-squares cost \eqref{lms model}. Since in this case $\H_{i-1}=\R_{i-1}=R_u$, we find that Assumption
		\ref{ass: hessian lipschitz} holds automatically. With regards to Assumption \ref{ass:noise-lipschitz}, at any iteration $i$, we have
		\eq{
			\s_i(\w_{i-1})\hspace{-0.8mm}-\hspace{-0.8mm}\s_i(\x_{i-1})=(R_u-\u_i \u_i\tran)(\tw_{i-1} - \tx_{i-1}).
		}
		so that, under the assumption of independent and stationary regression vectors,
		\be
		\bE[\|\s_i(\w_{i-1})-\s_i(\x_{i-1})\|^2|\filt_{i-1}]\le
		\ \xi_1\, \|\tw_{i-1} - \tx_{i-1}\|^2,
		\ee
		where $\xi_1=\bE\|R_u-\u_i \u_i\tran\|^2.$ Similarly,
		\be
		\bE[\|\s_i(\w_{i-1})-\s_i(\x_{i-1})\|^4|\filt_{i-1}]
		\le\ \xi_2\, \|\tw_{i-1} - \tx_{i-1}\|^4,
		\ee
		where $\xi_2=\bE\|R_u-\u_i \u_i\tran\|^4$. Therefore, Assumption \ref{ass:noise-lipschitz} holds.
		
		\vspace{2mm}
		\noindent{\textbf{Regularized logistic regression.}} Consider next the regularized logistic regression risk
		\eq{\label{app-prob-log}
			\ J(w)\define \frac{\rho}{2} \|w\|^2 + \bE \Big\{ \ln \big[ 1+ \exp(-\bgm(i) \h_i \tran w) \big] \Big\},
		}
		where $\h_i\in\mathbb{R}^M$ is a streaming sequence of independent feature vectors with $R_h=\bE \h_i \h_i \tran >0$,  and $\bgm(i)\in \{-1,+1\}$ is a streaming sequence of class labels. We assume the random processes $\{\bgm(i), \h_i\}$ are wide-sense stationary. Moreover,
		$\rho>0$ is a regularization parameter. We first verify the feasibility of Assumption \ref{ass:noise-lipschitz}.
		Note that the approximate gradient vector is given by:
		\eq{
			\widehat{\grad_w  J}(\w) & = \rho \w-\frac{\exp(-\bgm(i) \h_i \tran \w)}{1+ \exp(-\bgm(i) \h_i \tran \w) } \bgm(i) \h_i
		}
		and, hence,
		\eq{
		&\hspace{-5mm} \widehat{\grad_w  J}(\bpsi_{i-1}) - \widehat{\grad_w J}(\x_{i-1}) \nnb
		& \leq \rho \|\bpsi_{i-1} - \x_{i-1}\|+
		 \|\h_i\| \left \Vert \frac{\exp(-\bgm_i \h_i \tran \bpsi_{i-1})}{1+ \exp(-\bgm_i \h_i \tran \bpsi_{i-1}) } - \frac{\exp(-\bgm(i) \h_i \tran \x_{i-1})}{1+ \exp(-\bgm(i) \h_i \tran \x_{i-1}) } \right \Vert \label{app-a6-bound1}
		}
		Note that
		\eq{
			&\hspace{-2cm}\left \Vert \frac{\exp(-\bgm(i) \h_i \tran \bpsi_{i-1})}{1+ \exp(-\bgm(i) \h_i \tran \bpsi_{i-1}) } - \frac{\exp(-\bgm(i) \h_i \tran \x_{i-1})}{1+ \exp(-\bgm(i) \h_i \tran \x_{i-1}) } \right \Vert \nnb
			=&\ \left\Vert \frac{\exp(-\bgm(i) \h_i \tran \bpsi_{i-1}) - \exp(-\bgm(i) \h_i \tran \x_{i-1})}{[1+ \exp(-\bgm(i) \h_i \tran \bpsi_{i-1})] [1+ \exp(-\bgm(i) \h_i \tran \x_{i-1})]} \right\Vert \nnb \le&\ \left\Vert \frac{\exp(-\bgm(i) \h_i \tran \bpsi_{i-1}) - \exp(-\bgm(i) \h_i \tran \x_{i-1})}{\exp(-\bgm(i) \h_i \tran \bpsi_{i-1}) + \exp(-\bgm(i) \h_i \tran \x_{i-1})} \right\Vert \nnb
			= &\  \left\Vert \frac{\exp( \bgm(i) \h_i \tran \frac{\x_{i-1}-\bpsi_{i-1}}{2}) - \exp(-\bgm(i) \h_i \tran\frac{\x_{i-1}-\bpsi_{i-1}}{2})}{\exp(\bgm(i) \h_i \tran \frac{\x_{i-1}-\bpsi_{i-1}}{2}) + \exp(-\bgm(i) \h_i \tran \frac{\x_{i-1}-\bpsi_{i-1}}{2})} \right\Vert \nnb =&\ \left|\tanh\left( \bgm(i) \h_i \tran {(\x_{i-1}-\bpsi_{i-1})}/{2} \right) \right| \le \frac{1}{2}\|\h_i\| \|\bpsi_{i-1}-\x_{i-1}\|. \label{app-a6-bound2}
		}
		where in the last inequality we used the property
		$
			|\tanh(y)| \le |y|,\ \forall y\in\real.
		$
		Substituting \eqref{app-a6-bound2} into \eqref{app-a6-bound1}, we get
		\eq{
			\| \widehat{\grad_w  J}(\bpsi_{i-1}) - \widehat{\grad_w  J}(\x_{i-1})  \| \le \boldsymbol \eta_{1,i}  \|\bpsi_{i-1}-\x_{i-1}\|,
		}
		where $\boldsymbol \eta_{1,i}=\rho+\|\h_i\|^2/2$ is a random variable.
		
		On the other hand, it is shown in Eq. (2.20) of \citep{sayed2014adaptation} that the Hessian matrix $\grad_w^2 J(w)$ is upper bounded by $\delta I_M$, where  
		$\delta=\big(\rho + \lambda_\mathrm{max}(R_h) \big)$. We conclude from Lemma E.3 in the same reference that $\grad_w J(w)$ is Lipschitz continuous with modulus $\delta$, i.e.,
		\eq{
			\| {\grad_w  J}(\bpsi_{i-1}) - {\grad_w  J}(\x_{i-1})  \| \le \delta  \|\bpsi_{i-1}-\x_{i-1}\|.
		}
		Combining these results we get
		\bqq
		\|\s_i(\bpsi_{i\hspace{-0.5mm}-\hspace{-0.5mm}1})\hspace{-0.5mm}-\hspace{-0.5mm}\s_i(\x_{i\hspace{-0.5mm}-\hspace{-0.5mm}1})\| &=& \left\| [\widehat{\grad  J}(\bpsi_{i\hspace{-0.5mm}-\hspace{-0.5mm}1}) \hspace{-0.5mm}-\hspace{-0.5mm} \widehat{\grad  J}(\x_{i\hspace{-0.5mm}-\hspace{-0.5mm}1})]\hspace{-0.5mm}-\hspace{-0.5mm}[{\grad  J}(\bpsi_{i\hspace{-0.5mm}-\hspace{-0.5mm}1}) \hspace{-0.5mm}-\hspace{-0.5mm} {\grad  J}(\x_{i-1}) ]\right\| \nnb
		&\le& \boldsymbol \eta_i \|\bpsi_{i-1}-\x_{i-1}\|.
		\eqq
		where $\boldsymbol \eta_i = \boldsymbol \eta_{1,i} + \delta $ is a random variable. Since the
		$\{\h_i\}$ are independent feature vectors and $\boldsymbol \eta_i$ is only related to $\h_i$,  it follows that 
		\eq{
			\bE [\|\s_i(\bpsi_{i\hspace{-0.4mm}-\hspace{-0.4mm}1})\hspace{-0.8mm}-\hspace{-0.8mm}\s_i(\x_{i\hspace{-0.4mm}-\hspace{-0.4mm}1})\|^2|\filt_{i\hspace{-0.4mm}-\hspace{-0.4mm}1}] & \le \xi_1  \|\bpsi_{i\hspace{-0.4mm}-\hspace{-0.4mm}1}\hspace{-0.8mm}-\hspace{-0.8mm}\x_{i\hspace{-0.4mm}-\hspace{-0.4mm}1}\|^2, \\
			\bE [\|\s_i(\bpsi_{i\hspace{-0.4mm}-\hspace{-0.4mm}1})\hspace{-0.8mm}-\hspace{-0.8mm}\s_i(\x_{i\hspace{-0.4mm}-\hspace{-0.4mm}1})\|^4|\filt_{i\hspace{-0.4mm}-\hspace{-0.4mm}1}] & \le \xi_2  \|\bpsi_{i\hspace{-0.4mm}-\hspace{-0.4mm}1}\hspace{-0.8mm}-\hspace{-0.8mm}\x_{i\hspace{-0.4mm}-\hspace{-0.4mm}1}\|^4,
		}
		where $\xi_1 =\bE {\boldsymbol \eta_i}^2$ and $\xi_2 =\bE {\boldsymbol \eta_i}^4$.
		
		Next we check the feasibility of Assumption \ref{ass: hessian lipschitz}. For simplicity, we write $\bm{\gamma}$ instead of $\bm{\gamma}(i)$.
		It can be verified that for the cost function $J(w)$ in \eqref{app-prob-log}:
		\eq{
			\grad_w^2 J(w)& = \rho I_M + \bE \Big\{ \boldsymbol{ h}_i \h_i\tran \Big( \frac{\exp(-\bgm \h_i\tran w)}{[1+\exp(-\bgm \h_i\tran w)]^2}\Big) \Big\}.
		}
		Now, for any two variables $w_1$ and $w_2$ we have
		\eq{
			\label{app-fea-bound1}
			 &\hspace{-1cm} \|\grad_w^2 J(w_1) - \grad_w^2 J(w_2)\| \nnb
			 & =\  \left\|\bE \Big\{ \hspace{-0.8mm}  \h_i \h_i\tran \hspace{-0.8mm}  \Big( \hspace{-0.5mm} \frac{\exp(-\bgm \h_i\tran w_1)}{[1+\exp(-\bgm \h_i\tran w_1)]^2} \hspace{-0.3mm}  - \hspace{-0.3mm}  \frac{\exp(-\bgm \h_i\tran  w_2)}{[1+\exp(-\bgm \h_i\tran w_2)]^2}\hspace{-0.8mm}   \Big)  \hspace{-0.8mm}   \Big\}
			\hspace{-0.8mm} \right\| \nnb
			& \le\ \bE \left\Vert \h_i \h_i\tran \hspace{-0.8mm} \Big( \hspace{-0.5mm}\frac{\exp(-\bgm \h_i\tran  w_1)}{[1+\exp(-\bgm \h_i\tran  w_1)]^2}  - \frac{\exp(-\bgm \h_i\tran w_2)}{[1+\exp(-\bgm \h_i\tran  w_2)]^2}  \hspace{-0.8mm} \Big)   \hspace{-0.8mm}  \right\Vert \nnb
			& \le \bE \left\{ \hspace{-0.8mm} \left\Vert \h_i \h_i\tran \hspace{-0.8mm}  \right\Vert \hspace{-0.8mm} \left\Vert \hspace{-0.5mm} \frac{\exp(-\bgm \h_i\tran  w_1)}{[1+\exp(-\bgm \h_i\tran  w_1)]^2} \hspace{-0.3mm} - \hspace{-0.3mm}\frac{\exp(-\bgm \h_i\tran  w_2)}{[1+\exp(-\bgm \h_i\tran  w_2)]^2} \hspace{-0.8mm}\right\Vert \hspace{-0.8mm} \right\}
		}
		Let $\x_1=-\bgm \h_i\tran  w_1$ and $\x_2=-\bgm \h_i\tran  w_2$. Then,
		\eq{
			& \left\Vert \frac{\exp(-\bgm \h_i\tran  w_1)}{[1+\exp(-\bgm \h_i\tran  w_1)]^2}  - \frac{\exp(-\bgm \h_i\tran  w_2)}{[1+\exp(-\bgm \h_i\tran  w_2)]^2} \right\Vert \nnb
			=& \left\| \frac{\exp(\x_1)}{[1+\exp(\x_1)]^2}  - \frac{\exp(\x_2)}{[1+\exp(\x_2)]^2} \right\| \nnb
			=& \left\| \frac{\exp(\x_1)[1+\exp(\x_2)]^2-\exp(\x_2)[1+\exp(\x_1)]^2}{[1 + \exp(\x_1)]^2 [1+ \exp(\x_2)]^2}\right\| \nnb
			\overset{\mbox{(a)}}{\le}& \left\| \frac{\exp(-\x_2)-\exp(-\x_1)+\exp(\x_2)-\exp(\x_1)}{2\big(\exp(-\x_2)+\exp(-\x_1)+\exp(\x_2)+\exp(\x_1)\big)}  \right\| \nnb
			\le& \left\|\hspace{-0.7mm} \frac{\exp(-\x_2)\hspace{-1mm}-\hspace{-1mm}\exp(-\x_1)}{2\big(\exp(\hspace{-0.5mm}-\x_2\hspace{-0.5mm})\hspace{-1mm}+\hspace{-1mm}\exp(\hspace{-0.5mm}-\x_1\hspace{-0.5mm})\hspace{-1mm}+\hspace{-1mm}\exp(\hspace{-0.5mm}\x_2\hspace{-0.5mm})\hspace{-1mm}+\hspace{-1mm}\exp(\hspace{-0.5mm}\x_1\hspace{-0.5mm})\big)}  \hspace{-0.7mm}\right\|  \hspace{-1.5mm}+\hspace{-1.5mm} \left\|\hspace{-0.7mm} \frac{\exp(\x_2)\hspace{-0.5mm}-\hspace{-0.5mm}\exp(\x_1)}{2\big(\exp(\hspace{-0.5mm}-\x_2\hspace{-0.5mm})\hspace{-1mm}+\hspace{-1mm}\exp(\hspace{-0.5mm}-\x_1\hspace{-0.5mm})\hspace{-1mm}+\hspace{-1mm}\exp(\hspace{-0.5mm}\x_2\hspace{-0.5mm})\hspace{-1mm}+\hspace{-1mm}\exp(\hspace{-0.5mm}\x_1\hspace{-0.5mm})\big)}  \hspace{-0.7mm}\right\| \nnb
			\le& \left\| \frac{\exp(-\x_2)-\exp(-\x_1)}{2\big(\exp(-\x_2)+\exp(-\x_1)\big)}  \right\| + \left\| \frac{\exp(\x_2)-\exp(\x_1)}{2\big(\exp(\x_2)+\exp(\x_1)\big)}  \right\| \nnb
			\overset{\mbox{(b)}}{=}& \frac{1}{2} \left\| \frac{\exp(-\frac{\x_2-\x_1}{2})-\exp(\frac{\x_2-\x_1}{2})}{\exp(-\frac{\x_2-\x_1}{2})+\exp(\frac{\x_2-\x_1}{2})}  \right\|  + \frac{1}{2}\left\| \frac{\exp(\frac{\x_2-\x_1}{2})-\exp(-\frac{\x_2-\x_1}{2})}{\exp(\frac{\x_2-\x_1}{2})+\exp(-\frac{\x_2-\x_1}{2})}  \right\| \nnb
			=& | \tanh[(\x_2-\x_1)/2] |,
		}
		where (a) holds because of the following two facts:
		\eq{
			&\hspace{-2cm} \exp(\x_1)[1+\exp(\x_2)]^2-\exp(\x_2)[1+\exp(\x_1)]^2 \nnb
			=&\ \exp(\x_1) + \exp(\x_1+2\x_2) - \exp(\x_2) - \exp(\x_2+2\x_1) \nnb
			=&\ \exp(\x_1\hspace{-0.8mm}+\hspace{-0.8mm}\x_2) [\exp(-\x_2)\hspace{-0.8mm}+\hspace{-0.8mm} \exp(\x_2) \hspace{-0.8mm}-\hspace{-0.8mm} \exp(-\x_1) \hspace{-0.8mm}-\hspace{-0.8mm}\exp(\x_1)],
		}
		and
		\eq{
			&\hspace{-2cm} [1 + \exp(\x_1)]^2 [1+ \exp(\x_2)]^2 \nnb =&\ \big(1+2\exp(\x_1) + \exp(2\x_1)\big)\big(1+2\exp(\x_2) + \exp(2\x_2)\big) \nnb
			\ge&\  2\exp(\x_1)\hspace{-0.8mm}+\hspace{-0.8mm}2\exp(\x_2)\hspace{-0.8mm}+\hspace{-0.8mm}2\exp(\x_1+2\x_2) \hspace{-0.8mm}+\hspace{-0.8mm}2\exp(\x_2+2\x_1) \nnb
			=&\ 2 \exp(\x_1\hspace{-0.8mm}+\hspace{-0.8mm}\x_2) [\exp(-\x_2) \hspace{-0.8mm}+\hspace{-0.8mm} \exp(\x_2) \hspace{-0.8mm}+\hspace{-0.8mm} \exp(-\x_1) \hspace{-0.8mm}+\hspace{-0.8mm} \exp(\x_1)].
		}
		In addition, (b) holds if we extract $\exp(-\frac{\x_1+\x_2}{2})$ and $\exp(\frac{\x_1+\x_2}{2})$ from both the denominator and numerator of the first and second terms respectively.
		
		Using the definitions for $\x_1$ and $\x_2$, this last expression gives
		\bqq
		|\tanh[(\x_2-\x_1)/2]| = \left| \tanh\left(\frac{1}{2} \bgm \h_i\tran  (w_2 - w_1) \right) \right| \le \frac{1}{2}\|\h_i\| \|w_2-w_1\|. \label{app-fea-bound2}
		\eqq
		Substituting \eqref{app-fea-bound2} into \eqref{app-fea-bound1}, we obtain
		$
			\|\grad_w ^2 J(w_1) - \grad_w^2 J(w_2)\| \le \kappa \|w_1 - w_2\|,
		$
		where $\kappa=\bE\|\h_i\h_i\tran \|\|\h_i\|/2.$ Therefore, Assumption \ref{ass: hessian lipschitz} holds.

		\section{Proof of Lemma \ref{lm: hw-tx-4-bound}}
		\label{app-lm-hw-tx-4-bound}
		Referring to relation \eqref{ge-dist-hw-tx} and apply the inequality \eqref{a+b 4 bound-asdluy}, we reach
		\eq{
			\label{188}
			& \bE[\|\hw_i-\tx_i\|^4|\filt_{i-1}] \nnb
			=& \|(\I_M-\mu\H_{i-1})(\hw_{i-1}-\tx_{i-1})+\mu(\R_{i-1}-\H_{i-1})\tx_{i-1}  +\mu\beta^\prime \H_{i\hspace{-0.3mm}-\hspace{-0.3mm}1}\cw_{i-1} \|^4 \nnb & \ + 3\mu^4 \bE[\|\s_i(\hspace{-0.4mm}\bpsi_{i\hspace{-0.3mm}-\hspace{-0.3mm}1}\hspace{-0.4mm})\hspace{-0.5mm}-\hspace{-0.5mm}\s_i(\hspace{-0.4mm}\x_{i\hspace{-0.3mm}-\hspace{-0.3mm}1}\hspace{-0.4mm})\|^4|\filt_{i\hspace{-0.3mm}-\hspace{-0.3mm}1}]  + 8\mu^2 \|(\hspace{-0.4mm}I_M\hspace{-0.5mm}-\hspace{-0.5mm}\mu\H_{i\hspace{-0.3mm}-\hspace{-0.3mm}1}\hspace{-0.4mm})(\hspace{-0.4mm}\hw_{i\hspace{-0.3mm}-\hspace{-0.3mm}1}\hspace{-0.5mm}-\hspace{-0.5mm}\tx_{i\hspace{-0.3mm}-\hspace{-0.3mm}1}\hspace{-0.4mm})\hspace{-0.5mm} \nnb
			&\ +\mu(\hspace{-0.4mm}\R_{i\hspace{-0.3mm}-\hspace{-0.3mm}1}\hspace{-0.7mm}-\hspace{-0.7mm}\H_{i\hspace{-0.3mm}-\hspace{-0.3mm}1}\hspace{-0.4mm})\tx_{i\hspace{-0.3mm}-\hspace{-0.3mm}1} + \mu\beta^\prime \H_{i-1}\cw_{i-1}\|^2 \bE[\|\s_i(\hspace{-0.4mm}\bpsi_{i\hspace{-0.3mm}-\hspace{-0.3mm}1}\hspace{-0.4mm})\hspace{-0.5mm}-\hspace{-0.5mm}\s_i(\hspace{-0.4mm}\x_{i\hspace{-0.3mm}-\hspace{-0.3mm}1}\hspace{-0.4mm})\|^2|\filt_{i\hspace{-0.3mm}-\hspace{-0.3mm}1}] \nnb
			\overset{\mbox{(a)}}{\le}& \frac{1}{(1\hspace{-0.8mm}-\hspace{-0.8mm}t)^3}\|(I_M\hspace{-0.8mm}-\hspace{-0.8mm}\mu\H_{i\hspace{-0.3mm}-\hspace{-0.3mm}1})(\hw_{i\hspace{-0.3mm}-\hspace{-0.3mm}1}\hspace{-0.5mm}-\hspace{-0.5mm}\tx_{i\hspace{-0.3mm}-\hspace{-0.3mm}1})\|^4  \hspace{-0.8mm}+\hspace{-0.8mm}\frac{8\mu^4}{t^3}\|(\R_{i\hspace{-0.3mm}-\hspace{-0.3mm}1}\hspace{-0.5mm}-\hspace{-0.5mm}\H_{i\hspace{-0.3mm}-\hspace{-0.3mm}1})\tx_{i\hspace{-0.3mm}-\hspace{-0.3mm}1}\|^4 \hspace{-0.8mm}+\hspace{-0.8mm} \frac{8\mu^4\beta^{\prime4}}{t^3}\|\H_{i\hspace{-0.3mm}-\hspace{-0.3mm}1}\cw_{i\hspace{-0.3mm}-\hspace{-0.3mm}1}\|^4 \nnb
			&\ + 3\mu^4  \bE[\|\s_i(\bpsi_{i\hspace{-0.3mm}-\hspace{-0.3mm}1})\hspace{-0.5mm}-\hspace{-0.5mm}\s_i(\x_{i\hspace{-0.3mm}-\hspace{-0.3mm}1})\|^4|\filt_{i\hspace{-0.3mm}-\hspace{-0.3mm}1}]  + 8\mu^2 \left(\frac{1}{1-t} \|(I_M\hspace{-0.5mm}-\hspace{-0.5mm}\mu\H_{i\hspace{-0.3mm}-\hspace{-0.3mm}1})(\hw_{i\hspace{-0.3mm}-\hspace{-0.3mm}1}\hspace{-0.5mm}-\hspace{-0.5mm}\tx_{i\hspace{-0.3mm}-\hspace{-0.3mm}1})\|^2  \right. \nnb 
			&\ \left. +\frac{2\mu^2}{t}\|(\R_{i\hspace{-0.3mm}-\hspace{-0.3mm}1}\hspace{-0.7mm}-\hspace{-0.7mm}\H_{i\hspace{-0.3mm}-\hspace{-0.3mm}1})\tx_{i\hspace{-0.3mm}-\hspace{-0.3mm}1}\|^2 + \frac{2\mu^2\beta^{\prime 2}}{t}\|\H_{i-1}\cw_{i-1}\|^2 \right)  \bE[\|\s_i(\bpsi_{i-1})-\s_i(\x_{i-1})\|^2|\filt_{i-1}] \nnb
			\overset{\mbox{(b)}}{\le} & (1-\mu \nu) \|\hw_{i-1}-\tx_{i-1}\|^4 + \frac{8\mu}{\nu^3} \|(\R_{i-1}-\H_{i-1})\tx_{i-1}\|^4 +\hspace{-0.5mm} \frac{8\mu\beta^{\prime 4}\delta^4}{ \nu^3} \|\cw_{i\hspace{-0.3mm}-\hspace{-0.3mm}1}\|^4 \nnb
			&\ + 3\mu^4  \bE[\|\s_i(\bpsi_{i\hspace{-0.3mm}-\hspace{-0.3mm}1})\hspace{-0.5mm}-\hspace{-0.5mm}\s_i(\x_{i\hspace{-0.3mm}-\hspace{-0.3mm}1})\|^4|\filt_{i\hspace{-0.3mm}-\hspace{-0.3mm}1}] + 8\mu^2\hspace{-0.5mm} \Big(\hspace{-1mm}(1\hspace{-0.5mm}-\hspace{-0.5mm}\mu\nu) \|\hw_{i\hspace{-0.3mm}-\hspace{-0.3mm}1}\hspace{-0.5mm}-\hspace{-0.5mm}\tx_{i\hspace{-0.3mm}-\hspace{-0.3mm}1}\|^2\hspace{-0.8mm} \nnb 
			&\  +\frac{2\mu}{\nu}\|(\R_{i\hspace{-0.3mm}-\hspace{-0.3mm}1}\hspace{-0.7mm}-\hspace{-0.7mm}\H_{i\hspace{-0.3mm}-\hspace{-0.3mm}1})\tx_{i\hspace{-0.3mm}-\hspace{-0.3mm}1}\|^2    + \frac{2\mu \beta^{\prime 2} \delta^2}{\nu}\|\cw_{i-1}\|^2 \Big) \bE[\|\s_i(\bpsi_{i-1})\hspace{-0.5mm}-\hspace{-0.5mm}\s_i(\x_{i-1})\|^2|\filt_{i-1}],
		}
		where (a) holds because the facts that for any $a,b,c\in \real^m$,
		\eq{\|a+b+c\|^4&=\|(1-t)\frac{1}{1-t}a + t\frac{1}{t}(b+c)\|^4 \nnb
			&\le (1-t)\|\frac{1}{1-t}a\|^4 + t\|\frac{1}{t}(b+c)\|^4 = \frac{1}{(1-t)^3}\|a\|^4 + \frac{1}{t^3}\|b+c\|^4 \nnb
			&\le \frac{1}{(1-t)^3}\|a\|^4 + \frac{8}{t^3}\|b\|^4 + \frac{8}{t^3}\|c\|^4,
		}
		and
		\eq{\|a+b+c\|^2&=\|(1-t)\frac{1}{1-t}a + t\frac{1}{t}(b+c)\|^2 \nnb
			&\le (1-t)\|\frac{1}{1-t}a\|^2 + t\|\frac{1}{t}(b+c)\|^2 = \frac{1}{1-t}\|a\|^2 + \frac{1}{t}\|b+c\|^2 \nnb
			&\le \frac{1}{1-t}\|a\|^2 + \frac{2}{t}\|b\|^2 + \frac{2}{t}\|c\|^2,
		}
		In addition, (b) holds by choosing $t=\mu\nu$.
		
		To further simplify inequality \eqref{188}, we first note that
		\eq{
			\|\R_{i-1}-\H_{i-1}\|^2 & \hspace{-1mm} \le 2(\|\R_{i-1}\|^2 + \|\H_{i-1}\|^2) \le 4\delta^2, \label{208-1}\\
			\|\R_{i-1}-\H_{i-1}\|^4 & \hspace{-1mm}\le 8(\|\R_{i-1}\|^4 + \|\H_{i-1}\|^4) \le 16\delta^4.
		}
		As a result, we have
		\eq{
			\frac{8\mu}{\nu^3} \|(\R_{i-1}-\H_{i-1})\tx_{i-1}\|^4 \le a_1 \mu \|\tx_{i-1}\|^4,\quad \frac{2\mu}{\nu} \|(\R_{i-1}-\H_{i-1})\tx_{i-1}\|^2 \le a_2 \mu \|\tx_{i-1}\|^2,
		}
		where we define
		\eq{\label{2348a7c}
		a_1\define 128\delta^4/\nu^3, \quad a_2 \define 8\delta^2/\nu.
		}
		On the other hand, from conditions (\ref{cond-1a})--(\ref{cond-2a}), we have
		\eq{
			\hspace{-10mm}3\mu^4 \bE[\|\s_i(\bpsi_{i-1})-\s_i(\x_{i-1})\|^4|\filt_{i-1}] & \le 3\xi_2 \mu^4 \|\tpsi_{i-1} - \tx_{i-1}\|^4, \label{212-1} \\
			\hspace{-10mm}8\mu^2 \bE[\|\s_i(\bpsi_{i-1})-\s_i(\x_{i-1})\|^2|\filt_{i-1}]
			& \le 8 \xi_1 \mu^2 \|\tpsi_{i-1} - \tx_{i-1}\|^2, \label{app-ass-7}
		}
		In addition, from \eqref{nag:relation btw hw and tx} we get
		\eq{
			\|\tpsi_i-\tx_i\|^2 \le 2 \| \hw_i-\tx_i \|^2 + 2\beta^{\prime2}\|\cw_{i}\|^2, \quad
			\|\tpsi_i-\tx_i\|^4 \le 8 \| \hw_i-\tx_i \|^4 +  8\beta^{\prime4} \|\cw_{i}\|^4. \label{215-1}
		}
		Combining \eqref{212-1}--\eqref{215-1}, we have
		\bqq
		\hspace{-1.5cm} 3\mu^4 \bE[\| \s_i(\bpsi_{i-1})-\s_i(\x_{i-1})\|^4|\filt_{i-1}] 
		&\le& 24 \xi_2\mu^4 \| \hw_{i-1}-\tx_{i-1} \|^4+ 24 \xi_2\mu^4 \|\cw_{i-1}\|^4, \\
		\hspace{-1.5cm} 8\mu^2 \bE[\| \s_i(\bpsi_{i-1})-\s_i(\x_{i-1})\|^2|\filt_{i-1}] 
		&\le& 16\xi_1 \mu^2 \| \hw_{i-1}-\tx_{i-1} \|^2+ 16 \xi_1 \mu^2 \|\cw_{i-1}\|^2, \label{217-1}
		\eqq
		{\color{black}
		In this way, relation \eqref{188} becomes
		\eq{
			\label{195}
			& \bE[\|\hw_i-\tx_i\|^4|\filt_{i-1}] \nnb
			\le &\; (1-\mu \nu) \|\hw_{i-1}-\tx_{i-1}\|^4 + a_1 \mu \|\tx_{i-1}\|^4 + a_3 \mu \|\cw_{i-1}\|^4 +  \big[(1-\mu \nu) \|\hw_{i-1} -\tx_{i-1}\|^2 \nnb
			& + a_4 \mu \|\cw_{i-1}\|^2+ a_2 \mu \|\tx_{i-1}\|^2 \big]  \big[ a_5\mu^2 \| \hw_{i-1}-\tx_{i-1} \|^2+ a_5\mu^2 \|\cw_{i-1}\|^2 \big] \nnb
			&+   a_6\mu^4 \| \hw_{i-1}-\tx_{i-1} \|^4+ a_6\mu^4 \|\cw_{i-1}\|^4 \nnb
			\le &\; (1-\mu \nu) \|\hw_{i-1}-\tx_{i-1}\|^4 + a_1 \mu \|\tx_{i-1}\|^4 + a_3 \mu \|\cw_{i-1}\|^4 + a_5(1-\mu\nu)\mu^2 \|\hw_{i-1} - \tx_{i-1}\|^4\nnb
			&+ a_5\mu^2(1-\mu \nu + a_4 \mu)\|\hw_{i-1} - \tx_{i-1}\|^2\|\cw_{i-1}\|^2 + a_4a_5\mu^3\|\cw_{i-1}\|^4 \nnb
			&+ a_2a_5\mu^3 \|\tx_{i-1}\|^2\|\hw_{i-1}-\tx_{i-1}\|^2+ a_2a_5\mu^3 \|\tx_{i-1}\|^2\|\cw_{i-1}\|^2 \nnb
			& + a_6\mu^4 \| \hw_{i-1}-\tx_{i-1} \|^4+ a_6\mu^4 \|\cw_{i-1}\|^4 \nnb
			\overset{(a)}{\le} &\;(1-\mu \nu) \|\hw_{i-1}-\tx_{i-1}\|^4 + a_1 \mu \|\tx_{i-1}\|^4 + a_3 \mu \|\cw_{i-1}\|^4 + a_5(1-\mu\nu)\mu^2 \|\hw_{i-1} - \tx_{i-1}\|^4\nnb
			&+ a_5\mu^2(1-\mu \nu + a_4 \mu)\|\hw_{i-1} - \tx_{i-1}\|^4 + a_5\mu^2(1-\mu \nu + a_4 \mu) \|\cw_{i-1}\|^4 + a_4a_5\mu^3\|\cw_{i-1}\|^4 \nnb
			&+ a_2a_5\mu^3 \|\tx_{i-1}\|^4+a_2a_5\mu^3\|\hw_{i-1}-\tx_{i-1}\|^4+ a_2a_5\mu^3 \|\tx_{i-1}\|^4+a_2a_5\mu^3\|\cw_{i-1}\|^4 \nnb
			& + a_6\mu^4 \| \hw_{i-1}-\tx_{i-1} \|^4+ a_6\mu^4 \|\cw_{i-1}\|^4 \nnb
			\le&\; (1-\frac{\mu\nu}{2})\|\hw_{i-1}-\tx_{i-1}\|^4 + 2a_1\mu\|\tx_{i-1}\|^4 + 2a_3\mu \|\cw_{i-1}\|^4.
		}
		where we define
		\eq{\label{sdhy723}
		a_3\define \frac{8\beta^{\prime4}\delta^4}{\nu^4}, \quad a_4 \define \frac{2\beta^{\prime 2}\delta^2}{\nu}, \quad a_5 \define 16\xi_1,\quad a_6 \define 24\xi_2.
		}
		Taking expectations over $\filt_{i-1}$ for both sides of \eqref{195}, we have 
		\eq{\label{2389asdn}
		\bE\|\hw_i-\tx_i\|^4 &\le (1-\frac{\mu\nu}{2})\bE\|\hw_{i-1}-\tx_{i-1}\|^4 + 2a_1\mu\bE\|\tx_{i-1}\|^4 + 2a_3\mu\bE\|\cw_{i-1}\|^4 \nnb
		&= \rho_1 \bE\|\hw_{i-1}-\tx_{i-1}\|^4 + 2a_1\mu\bE\|\tx_{i-1}\|^4 + 2a_3\mu\bE\|\cw_{i-1}\|^4.
		}
		Now recall from (\ref{lm: x4-evolve-3}) that
		\eq{\label{234790jn}
			\bE\|\tx_i\|^4 \le \rho^{i+1}\bE\|\tx_{-1}\|^4  + A_1 \sigma_s^2 (i+1)\rho^{i+1}\mu^2 + \frac{A_2 \sigma_s^4\mu^2}{\nu^2},
		}
		where $\rho = 1 - \mu\nu$ and $A_1$ and $A_2$ are some constants. On the other hand, recall from \eqref{238asdbmnc} that 
		\eq{\label{238asdbmnc-237809m}
			\bE\|\cw_i\|^4 \le  \frac{B_1 \gamma^2\rho_2^{i+1}}{1-\beta}\mu^2 + \frac{B_1 \sigma_s^2(\delta^2 + \gamma^2)(i+1)\rho_2^{i+1}}{(1-\beta)^3}\mu^4 + \frac{B_1 [(\gamma^2 + \nu^2)\sigma_s^4 + \sigma_{s,4}^4 \nu^2]}{(1-\beta)^2\nu^2}\mu^4,
		}
		where $\rho_2 = 1 - \mu\nu/4$.
		Besides, we denote $\rho_1 = 1 - \mu\nu/2$ and clearly $\rho < \rho_1 < \rho_2$. Substituting \eqref{234790jn} and \eqref{238asdbmnc-237809m} into \eqref{2389asdn} we reach
	\eq{\label{27abdm9}
	\bE\|\hw_i - \tx_i\|^4 \ \le&\ \rho_1 \bE\|\hw_{i-1} - \tx_{i-1}\|^4 + a_7 \rho^{i} \mu + a_8 i \rho^{i} \mu^3 + a_9 \mu^3  + a_{10} \rho_2^{i} \mu^3 + a_{11} i \rho_2^{i} \mu^5 + a_{12}\mu^5\nnb
	\overset{(a)}{\le}&\ \rho_1 \bE\|\hw_{i-1} - \tx_{i-1}\|^4 + a_7 \rho_1^{i} \mu + a_8 i \rho_2^{i} \mu^3   + a_{10} \rho_2^{i} \mu^3 + a_{11} i \rho_2^{i} \mu^5 + 2 a_9 \mu^3,
	}
	where the constants are defined as
	\eq{\label{023hadsbn}
	& a_7 \define 2a_1 \bE\|\tx_{-1}\|^4, \quad a_8 \define 2 A_1 a_1 \sigma_s^2,\quad a_9 \define \frac{2A_2a_1\sigma_s^4}{\nu^2},\quad a_{10} \define \frac{2B_1a_3\gamma^2}{1-\beta} \nnb
	& a_{11} \define \frac{2 B_1 a_3 \sigma_s^2(\delta^2 + \gamma^2)}{(1-\beta)^3}, \quad a_{12} \define \frac{2B_1 a_3[(\gamma^2 + \nu^2) \sigma_s^4 + \sigma_{s,4}^4 \nu^2]}{(1-\beta)^2\nu^2}.
	}
	The inequality (a) holds because $\mu$ is chosen small enough such that $a_9 \mu^3 > a_{12} \mu^5$. Now we continue iterating \eqref{27abdm9} and get
	\eq{\label{27abdm9-237anl}
		&\hspace{-5mm} \bE\|\hw_i - \tx_i\|^4  \nnb
		\le&\ \rho_1^{i+1} \bE\|\hw_{-1} - \tx_{-1}\|^4 + a_7(i+1)\rho_1^i\mu + a_8 \rho_2^i \mu^3 \sum_{k=0}^{i}(i-k)\left(\frac{\rho_1}{\rho_2}\right)^k \nnb
		&\ + a_{10} \rho_2^{i}\mu^3 \sum_{k=0}^{i} \left(\frac{\rho_1}{\rho_2}\right)^k + a_{11}\rho_2^{i}\mu^5 \sum_{k=0}^{i}(i-k)\left(\frac{\rho_1}{\rho_2}\right)^k + \frac{2a_9\mu^3}{1-\rho_2}.
	}
	Note that $\rho_1 < \rho_2$, we then have
	\eq{\label{0872ha}
	\sum_{k=0}^i \left(\frac{\rho_1}{\rho_2}\right)^k \le \sum_{k=0}^\infty \left(\frac{\rho_1}{\rho_2}\right)^k \le \frac{\rho_2}{\rho_2 - \rho_1} = \frac{4 - \mu\nu}{\mu\nu} \le \frac{B_2}{\mu\nu},
	}
	where $B_2$ is some constant. Meanwhile, it also holds that
	\eq{\label{23790m}
	\sum_{k=0}^{i}(i-k)\left(\frac{\rho_1}{\rho_2}\right)^k \le i \sum_{k=0}^{i}\left(\frac{\rho_1}{\rho_2}\right)^k \le \frac{i B_2}{\mu \nu}.
	}
	Substituting \eqref{0872ha} and \eqref{23790m} into \eqref{27abdm9-237anl}, we get
	\eq{\label{27abdm9-237anl-0asdhjk}
		\bE\|\hw_i - \tx_i\|^4  \le&\ \rho_1^{i+1} \bE\|\hw_{-1} - \tx_{-1}\|^4 + a_7(i+1)\rho_1^i\mu + \frac{a_8 B_2 i \rho_2^i \mu^2}{\nu}  \nnb
		&\quad + \frac{a_{10}B_2 \rho_2^{i}\mu^2}{\nu} + \frac{a_{11} i \rho_2^{i}\mu^4}{\nu} + \frac{4a_9\mu^2}{\nu}.
	}
	Recall from \eqref{hw-tx=0} that $\hw_{-1}-\tx_{-1}=\mu \grad_wQ(\w_{-2};\btheta_{-1})$. Then it holds that
	\eq{
		\bE\|\hw_{-1}-\tx_{-1}\|^4=B_3 \mu^4,
	}
	where $B_3=\bE\|\grad_wQ(\w_{-2};\btheta_{-1})\|^4$. With this fact, expressions \eqref{27abdm9-237anl-0asdhjk} becomes 
	\eq{\label{27abdm9-237anl-0asdhjk-2367890mab}
		&\ \bE\|\hw_i - \tx_i\|^4  \nnb
		\le&\ B_3 \rho_1^{i+1} \mu^4  + a_7(i+1)\rho_1^i\mu + \frac{a_8 B_2 i \rho_2^i \mu^2}{\nu}  + \frac{a_{10}B_2 \rho_2^{i}\mu^2}{\nu} + \frac{a_{11} i \rho_2^{i}\mu^4}{\nu} + \frac{4a_9\mu^2}{\nu} \nnb
		\le&\ B_3 \rho_2^{i+1} \mu^4  + a_7(i+1)\rho_2^i\mu + \frac{a_8 B_2 i \rho_2^i \mu^2}{\nu}   + \frac{a_{10}B_2 \rho_2^{i}\mu^2}{\nu} + \frac{a_{11} i \rho_2^{i}\mu^4}{\nu} + \frac{4a_9\mu^2}{\nu}.
	}
	Furthermore, recall from \eqref{215-1} that 
	\eq{\label{nclai823}
	\bE\|\tpsi_i-\tx_i\|^4 \le 8 \bE\| \hw_i-\tx_i \|^4 +  8\bE \|\cw_{i}\|^4.
	}
	and recall the upper bound of $\bE\|\cw_i\|^4$ in \eqref{238asdbmnc-237809m}. With the definition of all constants we finally reach
	\eq{\label{2379anmco}
	\bE\|\tpsi_i-\tx_i\|^4 = O\left( \frac{\delta^4 i \rho_2^i\mu}{\nu^3} + \frac{\delta^4(\sigma_s^2+\gamma^2) i \rho_2^i\mu^2}{(1-\beta)\nu^5} + \frac{\delta^4(\delta^2 + \gamma^2)\sigma_s^2 i  \rho_2^i\mu^4}{(1-\beta)^3\nu} + \frac{\delta^4\sigma_s^4}{\nu^6}\mu^2\right).
	}
	To further simplify the notation, we notice that when $\mu$ is sufficiently small it holds that
	\eq{\label{28adnrj}
	&\ \frac{\delta^4 i \rho_2^i\mu}{\nu^3} + \frac{\delta^4(\sigma_s^2+\gamma^2) i \rho_2^i\mu^2}{(1-\beta)\nu^5} + \frac{\delta^4(\delta^2 + \gamma^2)\sigma_s^2 i  \rho_2^i\mu^4}{(1-\beta)^3\nu} \nnb
	=&\ i\rho_2^i\mu \left(\frac{\delta^4 }{\nu^3} + \frac{\delta^4(\sigma_s^2+\gamma^2)\mu}{(1-\beta)\nu^5} + \frac{\delta^4(\delta^2 + \gamma^2)\sigma_s^2 \mu^3}{(1-\beta)^3\nu} \right) \nnb
	\le&\ \frac{2 \delta^4i\rho_2^i\mu}{\nu^3} \le \frac{B_4 \delta^4(i+1)\rho_2^{i+1}\mu}{\nu^3}
	}
	for some constant $B_4$. As a result, we obtain
	\eq{\label{2379anmco-237898}
		\bE\|\tpsi_i-\tx_i\|^4 = O\left( \frac{\delta^4 (i+1) \rho_2^{i+1}\mu}{\nu^3} + \frac{\delta^4\sigma_s^4}{\nu^6}\mu^2\right).
	}
}

		\section{Proof of Lemma~\ref{lm: hw-tx-4-bound-hessian}}
		\label{Hessian diff-app}
		Under Assumption \ref{ass: hessian lipschitz}, we have
		\eq{
			&\hspace{-1.2cm} \|\H_{i-1} - \R_{i-1}\| \nnb
			=&  \left\| \int_0^1 \grad^2_w J(w^o - r \tpsi_{i-1})dr - \int_0^1 \grad^2_w J(w^o - r \tx_{i-1})dr  \right\|  \nnb
			\le& \int_0^1\| \grad^2_w J(w^o - r \tpsi_{i-1}) -  \grad^2_w J(w^o - r \tx_{i-1}) \| dr\nnb
			\le & \int_0^1 \kappa r \| \tpsi_{i-1} -   \tx_{i-1} \| dr = \frac{\kappa}{2}  \| \tpsi_{i-1} -   \tx_{i-1} \|.
		}
		As a result, it holds that
		\eq{
			\bE \| \R_{i-1}-\H_{i-1} \|^4 \le \bar{\kappa}\; \bE \|\tpsi_{i-1} - \tx_{i-1} \|^4,
		}
		where $\bar{\kappa}=\kappa^{4}/16$. {\color{black}Using \eqref{23789asdnm}, we reach the desired bounds shown in \eqref{23789asdnm-23bgasd}.
		}

		\section{Proof of Theorem \ref{thm:ge-dist}}
		\label{thm-ge-equi-app}
		For \eqref{cond-1a} in Assumption \ref{ass:noise-lipschitz}, if we take expectation over $\filt_{i-1}$ of both sides, it holds that
		\eq{
			\bE \|\s_i(\bpsi_{i-1})\hspace{-0.8mm}-\hspace{-0.8mm}\s_i(\x_{i-1})\|^2 & \le \xi_1 \bE\|\bpsi_{i-1}\hspace{-0.8mm}-\hspace{-0.8mm}\x_{i-1}\|^2. \label{cond-1a-320} 
		}
		Combining the above fact and inequalities \eqref{ge-dist-hw-tx-e-2}--\eqref{bound-72}, we get
		\eq{
			&\hspace{-8mm} \bE\|\hw_i-\tx_i\|^2 \nnb
			\le&\ (1-\mu \nu)\bE \| \hw_{i-1}-\tx_{i-1}\|^2 + r_1 \mu \bE\|\cw_{i-1} \|^2 \nnb
			&\ + r_2 \mu \sqrt{\bE \| \R_{i-1}-\H_{i-1} \|^4 \bE\|\tx_{i-1}\|^4}  + \xi_1\mu^2 \bE\| \tpsi_{i-1}-\tx_{i-1}\|^2,
			\label{ge-dist-hw-tx-e-2-89}
		}
		{\color{black}where the constants are defined as  
		\eq{\label{238adnk}
		r_1 \define \frac{2\beta^{\prime 2}\delta^2}{\nu}, \quad r_2 \define \frac{2}{\nu}.
		}}Likewise, from \eqref{2nd bound-2} we have
		\eq{
			\label{2nd bound-2-90}
			\bE\|\tpsi_i-\tx_i\|^2 \le 2 \bE\| \hw_i-\tx_i \|^2 + 2 \bE\|\cw_{i}\|^2.
		}
		Substituting the above inequality along with
		\eqref{23789asdnm-23bgasd} into \eqref{ge-dist-hw-tx-e-2-89} gives
		{\color{black}
		\eq{
			\label{ge-dist-hw-tx-e-2-102}
			&\ \bE\|\hw_i-\tx_i\|^2 \nnb
			\le&\ (1\hspace{-0.8mm}-\hspace{-0.8mm}\mu \nu \hspace{-0.8mm}+\hspace{-0.8mm} 2\xi_1 \mu^2 )\bE \| \hw_{i-1}\hspace{-0.8mm}-\hspace{-0.8mm}\tx_{i-1}\|^2 \hspace{-0.8mm}+\hspace{-0.8mm} (r_1 \mu \hspace{-0.8mm}+\hspace{-0.8mm} 2 \xi_1 \mu^2) \bE\|\cw_{i-1} \|^2 + \left[\sqrt{i} r_3 \rho_2^{i/2} \mu^{3/2} + r_4 \mu^2 \right] \sqrt{\bE\|\tx_{i-1}\|^4} \nnb
			\le&\ \left(1\hspace{-0.8mm}-\hspace{-0.8mm}\frac{\mu \nu}{2} \right)\bE \| \hw_{i-1}\hspace{-0.8mm}-\hspace{-0.8mm}\tx_{i-1}\|^2 \hspace{-0.8mm}+\hspace{-0.8mm} 2 r_1 \mu \bE\|\cw_{i-1} \|^2 + \left[r_3 \sqrt{i} \rho_2^{i/2} \mu^{3/2} + r_4 \mu^2 \right] \sqrt{\bE\|\tx_{i-1}\|^4} \nnb
			=&\ \rho_1 \bE \| \hw_{i-1}\hspace{-0.8mm}-\hspace{-0.8mm}\tx_{i-1}\|^2 \hspace{-0.8mm}+\hspace{-0.8mm} 2 r_1 \mu \bE\|\cw_{i-1} \|^2 + \left[r_3 \sqrt{i} \rho_2^{i/2} \mu^{3/2} + r_4 \mu^2 \right] \sqrt{\bE\|\tx_{i-1}\|^4}
		}
		where the constants are defined as
		\eq{\label{0abhnk237ads}
		& r_3\define \frac{r_2 \delta^2}{\nu^{3/2}}, \quad r_4\define \frac{r_2 \delta^2\sigma_s^2}{\nu^3}.
		}
		Recall the upper bound of $\bE\|\cw_i\|^2$ in \eqref{hb-tw-u2} that
		\eq{\label{237nam}
		\bE\|\cw_i\|^2 \le&\ \frac{C_2 (\delta^2 + \gamma^2)\rho_1^i \mu^2}{(1-\beta)^2} + \frac{C_1 \sigma_s^2 \mu^2}{1-\beta}
		}
		where $\alpha = 1 - \epsilon/2 < \rho_1$, and $C_1$ and $C_2$ are some constants. Substituting \eqref{237nam} into \eqref{ge-dist-hw-tx-e-2-102}, we have
		\eq{\label{lkn6789}
		&\hspace{-5mm} \bE\|\hw_i - \tx_i\|^2 \nnb
		\le&\ \rho_1 \bE \| \hw_{i-1} - \tx_{i-1}\|^2 + (r_5 \rho_1^{i} \mu^3 + r_6 \mu^3)+ \left[2r_3 \sqrt{i} \rho_2^{i/2} \mu^{3/2} + r_6 \mu^2 \right] \sqrt{\bE\|\tx_{i-1}\|^4},
		}
		where the constants are defined as
		\eq{\label{nas;lic8}
		r_5 \define \frac{2 C_2 r_1 (\delta^2 + \gamma^2)}{(1-\beta)^2},\quad r_6 \define \frac{2C_1 r_1 \sigma_s^2}{1-\beta}.
		}
		Next, using (\ref{lm: x4-evolve-3}) we have
		\eq{
			\label{lm: x4-evolve-3-92}
			\sqrt{\bE\|\tx_i\|^4} \le&\  \sqrt{\rho^{i+1}\bE\|\tx_{-1}\|^4  + A_3 \sigma_s^2 (i+1)\rho^{i+1}\mu^2 + \frac{A_2 \sigma_s^4\mu^2}{\nu^2}} \nnb
			\le&\  C_3 \rho^{(i+1)/2}  + C_4\sigma_s\sqrt{i+1}\rho^{(i+1)/2}\mu+C_5\frac{\sigma_s^2\mu}{\nu} \nnb
			\le&\ C_3 \rho_2^{(i+1)/2}  + C_4\sigma_s\sqrt{i+1}\rho_2^{(i+1)/2}\mu+C_5\frac{\sigma_s^2\mu}{\nu}.
		}
		Substituting \eqref{lm: x4-evolve-3-92} into \eqref{lkn6789}, we reach
		\eq{\label{asln876}
		&\hspace{-5mm} \bE\|\hw_i - \tx_i\|^2 \nnb
		\le&\ \rho_1 \bE \| \hw_{i-1} - \tx_{i-1}\|^2 + (r_5 \rho_1^{i} \mu^3 + r_6 \mu^3) + r_7 \sqrt{i}\rho_2^i\mu^{3/2} + r_8 i\rho_2^i \mu^{5/2} +  r_9 \sqrt{i}\rho_2^{i/2}\mu^{5/2} \nnb
		&\ + r_{10}\mu^2\rho_2^{i/2} + r_{11}\sqrt{i}\rho_2^{i/2}\mu^3 + r_{12} \mu^3,
		}
		where the constants are defined as
		\eq{\label{97623hla}
		& r_7\define 2C_3 r_3,\quad r_8 \define 2C_4r_3\sigma_s, \quad r_9 \define \frac{2C_5r_3\sigma_s^2}{\nu}\nnb
		& r_{10}\define C_3r_6,\quad r_{11}\define C_4r_6\sigma_s,\quad r_{12}\define \frac{C_5r_6\sigma_s^2}{\nu}.
		}
		Now we denote 
		\eq{\label{29asdn}
		\tau_1 \define \rho_1^{1/2},\quad \tau_2 \define \rho_2^{1/2}. 
		}
		Clearly, we have
		\eq{\label{cxhn23678}
		\rho_1 < \tau_1,\quad \rho_2 < \tau_2, \quad \tau_1 < \tau_2.
		}
		With the above relation, expressions \eqref{asln876} becomes 
				\eq{\label{0062378}
					&\hspace{-5mm} \bE\|\hw_i - \tx_i\|^2 \nnb
					\le&\ \tau_2 \bE \| \hw_{i-1} - \tx_{i-1}\|^2 + (r_5 \tau_2^{i} \mu^3 + r_6 \mu^3) + r_7 \sqrt{i} \tau_2^i\mu^{3/2} + r_8 i\tau_2^i \mu^{5/2} +  r_9 \sqrt{i} \tau_2^{i}\mu^{5/2} \nnb
					&\ + r_{10}\mu^2\tau_2^{i} + r_{11} \sqrt{i} \tau_2^{i}\mu^3 + r_{12} \mu^3 \nnb
					\le&\ \tau_2 \bE \| \hw_{i-1} - \tx_{i-1}\|^2 + 2r_{10}\tau_2^i\mu^2 + 2r_7\sqrt{i}\tau_2^i\mu^{3/2} + r_8 i \tau_2^i\mu^{5/2} + (r_6 + r_{12})\mu^3 \nnb
					\le&\ \tau_2^{i+1} \bE \| \hw_{-1} - \tx_{-1}\|^2 + 2r_{10}(i+1)\tau_2^i \mu^2 + 2r_7\tau_2^i\mu^{3/2}\left(\sum_{k=0}^{i}\sqrt{i-k}\right) \nnb
					&\ +r_8\tau_2^i\mu^{5/2}i(i+1)  + \frac{(r_6+r_{12})\mu^3}{1-\tau_2}.
				}
		Note that $\tau_2 = \sqrt{\rho_2} = \sqrt{1-\mu\nu/4}$. When $\mu$ is sufficiently small, we have $\tau_2 = 1 - \mu\nu/8$ and hence $1-\tau_2 = \mu\nu/2$. With this fact and recall that $\bE \| \hw_{-1}-\tx_{-1}\|^2=C_6 \mu^2$, finally we can show that
		\eq{
			\label{110-1}
			\bE\|\hw_i - \tx_i\|^2 \le&\ C_6 \tau_2^{i+1} \mu^2 + 2r_{10}(i+1)\tau_2^i \mu^2 + 2r_7\tau_2^i\mu^{3/2}\left(\sum_{k=0}^{i}\sqrt{i-k}\right) \nnb
			&\ +r_8\tau_2^i\mu^{5/2}i(i+1)  + \frac{8(r_6+r_{12})\mu^2}{\nu}.
		}
		Substituting the definitions of all constants, we get 
		\eq{
			\label{110-1-23hasd}
			&\hspace{-5mm} \bE\|\hw_i - \tx_i\|^2 \nnb
			\le&\ C_7 \left( \frac{\delta^2 s_1(i) \tau_2^i \mu^{3/2}}{\nu^{5/2}} +\tau_2^{i+1}\mu^2+ \frac{\sigma_s^2\delta^2(i+1)\tau_2^i\mu^2}{(1-\beta)\nu} + \frac{\sigma_s\delta^2 s_2(i) \tau_2^i\mu^{5/2}}{\nu^{5/2}} + \frac{\delta^2 \sigma_s^4 \mu^2}{(1-\beta)\nu^2}\right) \nnb
			\le&\ C_8 \left( \frac{\delta^2\sigma_s^2 s_2(i) \tau_2^{i+1} \mu^{3/2} }{(1-\beta)\nu^{5/2}} +
			\frac{\delta^2 \sigma_s^4 \mu^2}{(1-\beta)\nu^2} \right). 
		}
		where 
		\eq{\label{sum}
		s_1(i) \define \sum_{k=0}^{i}\sqrt{i-k}, \quad s_2(i)\define i(i+1)
		}
		Furthermore, it holds that
		\eq{
			\label{2nd bound-20-2l;an}
			\bE\|\tw_i-\tx_i\|^2 \le 2 \bE\| \hw_i-\tx_i \|^2 + 2\beta^2 \bE\|\cw_{i}\|^2.
		}
		Using the upper bound for $\bE\|\cw_i\|^2$ in \eqref{237nam}, we then have
		\eq{
			\label{110-1-23hasd-asdnkj}
			\bE\|\hw_i - \tx_i\|^2 = O \left( \frac{\delta^2\sigma_s^2 s_2(i) \tau_2^{i+1} \mu^{3/2} }{(1-\beta)\nu^{5/2}} + \frac{(\delta^2 + \gamma^2)\rho_1^{i+1} \mu^2}{(1-\beta)^2} +
			\frac{\delta^2 \sigma_s^4 \mu^2}{(1-\beta)\nu^2} \right).
		}

	\bibliography{icassp_ref,sto_pgextra_jmlr}

\begin{thebibliography}{58}
\providecommand{\natexlab}[1]{#1}
\providecommand{\url}[1]{\texttt{#1}}
\expandafter\ifx\csname urlstyle\endcsname\relax
  \providecommand{\doi}[1]{doi: #1}\else
  \providecommand{\doi}{doi: \begingroup \urlstyle{rm}\Url}\fi

\bibitem[Attoh-Okine(1999)]{attoh1999analysis}
N.~O. Attoh-Okine.
\newblock Analysis of learning rate and momentum term in backpropagation neural
  network algorithm trained to predict pavement performance.
\newblock \emph{Advances in Engineering Software}, 30\penalty0 (4):\penalty0
  291--302, 1999.

\bibitem[Beck and Teboulle(2009)]{beck2009fast}
A.~Beck and M.~Teboulle.
\newblock A fast iterative shrinkage-thresholding algorithm for linear inverse
  problems.
\newblock \emph{SIAM Journal on Imaging Sciences}, 2\penalty0 (1):\penalty0
  183--202, 2009.

\bibitem[Bellanger(2001)]{bellanger2001adaptive}
M.~Bellanger.
\newblock \emph{Adaptive Digital Filters and Signal Analysis}.
\newblock 2nd Edition, Marcel Dekker, 2001.

\bibitem[Bertsekas(1999)]{bertsekas1999nonlinear}
D.~P. Bertsekas.
\newblock \emph{Nonlinear programming}.
\newblock Athena Scientific, 1999.

\bibitem[Bottou(2010)]{bottou2010large}
L.~Bottou.
\newblock Large-scale machine learning with stochastic gradient descent.
\newblock In \emph{Proc. International Conference on Computational Statistics},
  pages 177--186. Springer, Paris, France, 2010.

\bibitem[Bousquet and Bottou(2008)]{bousquet2008tradeoffs}
O.~Bousquet and L.~Bottou.
\newblock The tradeoffs of large scale learning.
\newblock In \emph{{Proc.} Advances in Neural Information Processing Systems},
  pages 161--168, Vancouver, Canada, 2008.

\bibitem[Boyd and Vandenberghe(2004)]{boyd2004convex}
S.~Boyd and L.~Vandenberghe.
\newblock \emph{Convex optimization}.
\newblock Cambridge University Press, 2004.

\bibitem[Cevher et~al.(2014)Cevher, Becker, and Schmidt]{cevher2014convex}
V.~Cevher, S.~Becker, and M.~Schmidt.
\newblock Convex optimization for big data: Scalable, randomized, and parallel
  algorithms for big data analytics.
\newblock \emph{IEEE Signal Processing Magazine}, 31\penalty0 (5):\penalty0
  32--43, 2014.

\bibitem[d'Aspremont(2008)]{d2008smooth}
A.~d'Aspremont.
\newblock Smooth optimization with approximate gradient.
\newblock \emph{SIAM Journal on Optimization}, 19\penalty0 (3):\penalty0
  1171--1183, 2008.

\bibitem[Defazio et~al.(2014)Defazio, Bach, and
  Lacoste-Julien]{defazio2014saga}
A.~Defazio, F.~Bach, and S.~Lacoste-Julien.
\newblock {SAGA}: A fast incremental gradient method with support for
  non-strongly convex composite objectives.
\newblock In \emph{Proc. Advances in Neural Information Processing Systems},
  pages 1646--1654, Montreal, Canada, 2014.

\bibitem[Devolder et~al.(2014)Devolder, Glineur, and
  Nesterov]{devolder2014first}
O.~Devolder, F.~Glineur, and Y.~Nesterov.
\newblock First-order methods of smooth convex optimization with inexact
  oracle.
\newblock \emph{Mathematical Programming}, 146\penalty0 (1-2):\penalty0 37--75,
  2014.

\bibitem[Dieuleveut et~al.(2016)Dieuleveut, Flammarion, and
  Bach]{Dieuleveut2016harder}
A.~Dieuleveut, N.~Flammarion, and F.~Bach.
\newblock Harder, better, faster, stronger convergence rates for least-squares
  regression.
\newblock \emph{arXiv: 1602.05419}, Feb. 2016.

\bibitem[Duchi et~al.(2011)Duchi, Hazan, and Singer]{duchi2011adaptive}
J.~Duchi, E.~Hazan, and Y.~Singer.
\newblock Adaptive subgradient methods for online learning and stochastic
  optimization.
\newblock \emph{Journal of Machine Learning Research}, 12\penalty0
  (2):\penalty0 2121--2159, 2011.

\bibitem[Flammarion and Bach(2015)]{flammarion2015averaging}
N.~Flammarion and F.~Bach.
\newblock From averaging to acceleration, there is only a step-size.
\newblock \emph{Journal of Machine Learning Research}, 40\penalty0
  (1):\penalty0 1--38, 2015.

\bibitem[Gemulla et~al.(2011)Gemulla, Nijkamp, Haas, and
  Sismanis]{gemulla2011large}
R.~Gemulla, E.~Nijkamp, P.~J. Haas, and Y.~Sismanis.
\newblock Large-scale matrix factorization with distributed stochastic gradient
  descent.
\newblock In \emph{Proc. International Conference on Knowledge Discovery and
  Data Mining}, pages 69--77, Alberta, Canada, 2011.

\bibitem[Ghadimi and Lan(2012)]{ghadimi2012optimal}
S.~Ghadimi and G.~Lan.
\newblock Optimal stochastic approximation algorithms for strongly convex
  stochastic composite optimization {I}: A generic algorithmic framework.
\newblock \emph{SIAM Journal on Optimization}, 22\penalty0 (4):\penalty0
  1469--1492, 2012.

\bibitem[Haykin(2008)]{haykin2008adaptive}
S.~Haykin.
\newblock \emph{Adaptive Filter Theory}.
\newblock Fourth Edition, Prentice-Hall, NJ, 2008.

\bibitem[Hu et~al.(2009)Hu, Pan, and Kwok]{hu2009accelerated}
C.~Hu, W.~Pan, and J.~T. Kwok.
\newblock Accelerated gradient methods for stochastic optimization and online
  learning.
\newblock In \emph{Proc. Advances in Neural Information Processing Systems},
  pages 781--789, Vancouver, Canada, 2009.

\bibitem[Johnson and Zhang(2013)]{johnson2013accelerating}
R.~Johnson and T.~Zhang.
\newblock Accelerating stochastic gradient descent using predictive variance
  reduction.
\newblock In \emph{Proc. Advances in Neural Information Processing Systems {\rm
  (NIPS)}}, pages 315--323, Lake Tahoe, Navada, 2013.

\bibitem[Kahou et~al.(2013)Kahou, Pal, Bouthillier, Froumenty, and
  et~al.]{kahou2013combining}
S.~Kahou, C.~Pal, X.~Bouthillier, P.~Froumenty, and et~al.
\newblock Combining modality specific deep neural networks for emotion
  recognition in video.
\newblock In \emph{Proc. International Conference on Multimodal Interaction},
  pages 543--550, Sydney, Australia, 2013.

\bibitem[Krizhevsky et~al.(2012)Krizhevsky, Sutskever, and
  Hinton]{krizhevsky2012imagenet}
A.~Krizhevsky, I.~Sutskever, and G.~E. Hinton.
\newblock {ImageNet} classification with deep convolutional neural networks.
\newblock In \emph{Proc. Advances in Neural Information Processing Systems},
  pages 1097--1105, 2012.

\bibitem[Lan(2012)]{lan2012optimal}
G.~Lan.
\newblock An optimal method for stochastic composite optimization.
\newblock \emph{Mathematical Programming}, 133\penalty0 (1-2):\penalty0
  365--397, 2012.

\bibitem[Lessard et~al.(2016)Lessard, Recht, and Packard]{lessard2016analysis}
L.~Lessard, B.~Recht, and A.~Packard.
\newblock Analysis and design of optimization algorithms via integral quadratic
  constraints.
\newblock \emph{SIAM Journal on Optimization}, 26\penalty0 (1):\penalty0
  57--95, 2016.

\bibitem[Nedi{\'c} and Bertsekas(2001)]{nedic2001convergence}
A.~Nedi{\'c} and D.~P. Bertsekas.
\newblock Convergence rate of incremental subgradient algorithms.
\newblock In S.~Uryasey and M.~Pardalos P, editors, \emph{Stochastic
  Optimization: Algorithms and Applications}, volume~54, pages 223--264.
  Springer, 2001.

\bibitem[Nesterov(1983)]{Nesterov1983}
Y.~Nesterov.
\newblock A method of solving a convex programming problem with convergence
  rate {$O(1/k^2)$}.
\newblock \emph{Soviet Mathematics Doklady}, 27\penalty0 (2):\penalty0
  372--376, 1983.

\bibitem[Nesterov(2004)]{Nesterov2003}
Y.~Nesterov.
\newblock \emph{Introductory Lectures on Convex Optimization}.
\newblock Springer, 2004.

\bibitem[Nesterov(2005)]{Nesterov2005}
Y.~Nesterov.
\newblock Smooth minimization of non-smooth functions.
\newblock \emph{Mathematical Programming}, 103\penalty0 (1):\penalty0 127--152,
  2005.

\bibitem[Nitanda(2014)]{nitanda2014stochastic}
A.~Nitanda.
\newblock Stochastic proximal gradient descent with acceleration techniques.
\newblock In \emph{Proc. Advances in Neural Information Processing Systems},
  pages 1574--1582, Montreal, Canada, 2014.

\bibitem[Polyak(1964)]{polyak1964some}
B.~T. Polyak.
\newblock Some methods of speeding up the convergence of iteration methods.
\newblock \emph{USSR Computational Mathematics and Mathematical Physics},
  4\penalty0 (5):\penalty0 1--17, 1964.

\bibitem[Polyak(1987)]{polyak1987intro}
B.~T. Polyak.
\newblock \emph{Introduction to Optimization}.
\newblock Optimization Software, NY, 1987.

\bibitem[Proakis(1974)]{proakis1974channel}
J.~G. Proakis.
\newblock Channel identification for high speed digital communications.
\newblock \emph{IEEE Transactions on Automatic Control}, 19\penalty0
  (6):\penalty0 916--922, 1974.

\bibitem[Qian(1999)]{qian1999momentum}
N.~Qian.
\newblock On the momentum term in gradient descent learning algorithms.
\newblock \emph{Neural Networks}, 12\penalty0 (1):\penalty0 145--151, 1999.

\bibitem[Roux et~al.(2012)Roux, Schmidt, and Bach]{roux2012stochastic}
N.~L. Roux, M.~Schmidt, and F.~R. Bach.
\newblock A stochastic gradient method with an exponential convergence rate for
  finite training sets.
\newblock In \emph{Proc. Advances in Neural Information Processing Systems {\rm
  (NIPS)}}, pages 2663--2671, Lake Tahoe, Navada, 2012.

\bibitem[Roy and Shynk(1990)]{Shynk1990}
S.~Roy and J.~J. Shynk.
\newblock Analysis of the momentum {LMS} algorithm.
\newblock \emph{IEEE Transactions on Acoustics, Speech and Signal Processing},
  38\penalty0 (12):\penalty0 2088--2098, 1990.

\bibitem[Sayed(2008)]{Sayed2011adaptive}
A.~H. Sayed.
\newblock \emph{Adaptive Filters}.
\newblock Wiley, NY, 2008.

\bibitem[Sayed(2014{\natexlab{a}})]{sayed2014adaptation}
A.~H. Sayed.
\newblock Adaptation, learning, and optimization over networks.
\newblock \emph{Foundations and Trends in Machine Learning}, 7\penalty0
  (4-5):\penalty0 311--801, Jul. 2014{\natexlab{a}}.

\bibitem[Sayed(2014{\natexlab{b}})]{sayed2014adaptive}
A.~H. Sayed.
\newblock Adaptive networks.
\newblock \emph{Proceedings of the IEEE}, 102\penalty0 (4):\penalty0 460--497,
  2014{\natexlab{b}}.

\bibitem[Shalev-Shwartz(2015)]{shalev2015sdca}
S.~Shalev-Shwartz.
\newblock {SDCA} without duality.
\newblock \emph{arXiv:1502.06177}, Feb. 2015.

\bibitem[Shalev-Shwartz and Zhang(2014)]{shalev2014accelerated}
S.~Shalev-Shwartz and T.~Zhang.
\newblock Accelerated proximal stochastic dual coordinate ascent for
  regularized loss minimization.
\newblock In \emph{Proc. International Conference on Machine Learning}, pages
  64--72, Beijing, China, 2014.

\bibitem[Sharma et~al.(1998)Sharma, Sethares, and Bucklew]{sharma1998analysis}
R.~Sharma, W.~A. Sethares, and J.~A. Bucklew.
\newblock Analysis of momentum adaptive filtering algorithms.
\newblock \emph{IEEE Transactions on Signal Processing}, 46\penalty0
  (5):\penalty0 1430--1434, 1998.

\bibitem[Shynk and Roy(June 1988)]{shynk1988lms}
J.~J. Shynk and S.~Roy.
\newblock The {LMS} algorithm with momentum updating.
\newblock In \emph{Proc. IEEE International Symposium on Circuits and Systems},
  pages 2651--2654, Espoo, Finland, June 1988.

\bibitem[Sutskever et~al.(2013)Sutskever, Martens, Dahl, and
  Hinton]{Sutskever2013}
I.~Sutskever, J.~Martens, G.~Dahl, and G.~Hinton.
\newblock On the importance of initialization and momentum in deep learning.
\newblock In \emph{Proc. International Conference on Machine Learning}, pages
  1139--1147, Atlanta, USA, 2013.

\bibitem[Szegedy et~al.(2015)Szegedy, Liu, Jia, Sermanet, Reed, Anguelo, Erhan,
  Vanhoucke, and Rabinovich]{szegedy2014going}
C.~Szegedy, W.~Liu, Y.~Jia, P.~Sermanet, S.~Reed, D.~Anguelo, D.~Erhan,
  V.~Vanhoucke, and A.~Rabinovich.
\newblock Going deeper with convolutions.
\newblock In \emph{Proc. IEEE Conference on Computer Vision and Pattern
  Recognition}, pages 1--9, Boston, USA, June 2015.

\bibitem[Theodoridis(2015)]{theodoridis2015machine}
S.~Theodoridis.
\newblock \emph{Machine Learning: A Bayesian and Optimization Perspective}.
\newblock Academic Press, NY, 2015.

\bibitem[Ting et~al.(2000)Ting, Cowan, and Woods]{ting2000tracking}
L.~K. Ting, C.~F.~N. Cowan, and R.~F. Woods.
\newblock Tracking performance of momentum {LMS} algorithm for a chirped
  sinusoidal signal.
\newblock In \emph{Proc. European Signal Processing Conference}, pages 1--4,
  Tampere, Finland, 2000.

\bibitem[Tugay and Tanik(1989)]{Tugay1989}
M.~A. Tugay and Y.~Tanik.
\newblock Properties of the momentum {LMS} algorithm.
\newblock \emph{Signal Processing}, 18\penalty0 (2):\penalty0 117--127, 1989.

\bibitem[Tygert(2016)]{tygert2016poor}
M.~Tygert.
\newblock Poor starting points in machine learning.
\newblock \emph{arXiv:1602.02823}, Feb. 2016.

\bibitem[Widrow and Stearns(1985)]{widrow1985adaptive}
B.~Widrow and S.~D. Stearns.
\newblock \emph{Adaptive Signal Processing}.
\newblock Prentice-Hall, NJ, 1985.

\bibitem[Wiegerinck et~al.(1994)Wiegerinck, Komoda, and Heskes]{Wiegerinck1994}
W.~Wiegerinck, A.~Komoda, and T.~Heskes.
\newblock Stochastic dynamics of learning with momentum in neural networks.
\newblock \emph{Journal of Physics A: Mathematical and General}, 27\penalty0
  (13):\penalty0 4425--4438, 1994.

\bibitem[Xiao(2010)]{xiao2010dual}
L.~Xiao.
\newblock Dual averaging methods for regularized stochastic learning and online
  optimization.
\newblock \emph{Journal of Machine Learning Research}, 11\penalty0
  (Oct):\penalty0 2543--2596, 2010.

\bibitem[Ying and Sayed(2015)]{ying2015performance}
B.~Ying and A.~H. Sayed.
\newblock Performance limits of online stochastic {sub-gradient} learning.
\newblock \emph{arXiv:1511.07902}, Oct. 2015.

\bibitem[Ying and Sayed(2016)]{ying2016performances}
B.~Ying and A.~H. Sayed.
\newblock Performance limits of single-agent and multi-agent sub-gradient
  stochastic learning.
\newblock In \emph{Proc. International Conference on Acoustics, Speech and
  Signal Processing}, pages 4905--4909, Shanghai, China, March 2016.

\bibitem[Yuan et~al.(2016)Yuan, Ying, and Sayed]{yuan2016influence}
K.~Yuan, B.~Ying, and A.~H. Sayed.
\newblock On the influence of momentum acceleration on online learning.
\newblock In \emph{Proc. International Conference on Acoustics, Speech and
  Signal Processing}, pages 4915--4919, Shanghai, China, March 2016.

\bibitem[Zareba et~al.(2015)Zareba, Gonczarek, Tomczak, and
  {\'S}wiatek]{zarkeba2015accelerated}
S.~Zareba, A.~Gonczarek, J.~M. Tomczak, and J.~{\'S}wiatek.
\newblock Accelerated learning for restricted {Boltzmann} machine with momentum
  term.
\newblock In \emph{Proc. International Conference on Systems Engineering},
  pages 187--192, Coventry, UK, 2015.

\bibitem[Zhang(2004)]{zhang2004solving}
T.~Zhang.
\newblock Solving large scale linear prediction problems using stochastic
  gradient descent algorithms.
\newblock In \emph{Proc. International Conference on Machine Learning}, page
  116, Alberta, Canada, 2004.

\bibitem[Zhang and LeCun(2015)]{zhang2015text}
X.~Zhang and Y.~LeCun.
\newblock Text understanding from scratch.
\newblock \emph{arXiv:1502.01710}, Feb. 2015.

\bibitem[Zhong and Kwok(2014)]{zhong2014accelerated}
W.~Zhong and J.~T. Kwok.
\newblock Accelerated stochastic gradient method for composite regularization.
\newblock In \emph{Proc. International Conference on Artificial Intelligence
  and Statistics}, pages 1086--1094, Reykjavik, Iceland, 2014.

\bibitem[Zhu(2016)]{allen2016katyusha}
Z.~Zhu.
\newblock Katyusha: Accelerated variance reduction for faster {SGD}.
\newblock \emph{arXiv:1603.05953}, Mar. 2016.

\end{thebibliography}

\end{document}